\numberwithin{equation}{section}
\theoremstyle{definition}
\newtheorem{defn}[equation]{Definition}
\newtheorem{rem}[equation]{Remark}
\theoremstyle{plain}
\newtheorem{lem}[equation]{Lemma}
\newtheorem{thm}[equation]{Theorem}
\newtheorem{prop}[equation]{Proposition}
\newtheorem{cor}[equation]{Corollary}
\newtheorem{claim}[equation]{Claim}
\newtheorem{observation}[equation]{Observation}
\newcommand{\bR}{\mathbb{R}}
\newcommand{\bZ}{\mathbb{Z}}
\newcommand{\cU}{\mathcal{U}}
\newcommand{\colim}{\operatorname{colim}}
\newcommand{\Diff}{\mathrm{Diff}}
\newcommand{\eps}{\epsilon}
\newcommand{\res}{\mathrm{res}}
\newcommand{\im}{\operatorname{Im}}
\newcommand{\id}{\operatorname{id}}
\newcommand{\sign}{\operatorname{sign}}
\newcommand{\Spin}{\mathrm{Spin}}
\newcommand{\twomatrix}[4]{\begin{pmatrix} #1 & #2 \\ #3 & #4  \end{pmatrix}}
\newcommand{\Riem}{{\mathcal R}}
\newcommand{\scal}{\mathrm{scal}}
\newcommand{\norm}[1]{\| #1 \|}
\newcommand{\torp}{\mathrm{tor}}
\title[The Gromov-Lawson-Chernysh surgery theorem]{The Gromov-Lawson-Chernysh surgery theorem}
\author{Johannes Ebert}
\thanks{J.E. was partially supported by the SFB 878 ``Groups, Geometry and Actions''}
\address{Mathematisches Institut, Universit\"at M\"unster\\
Einsteinstra{\ss}e 62\\
48149 M\"unster\\
Bundesrepublik Deutschland}
\email{johannes.ebert@uni-muenster.de}
\author{Georg Frenck}
\thanks{G.F. was supported by the SFB 878 ``Groups, Geometry and Actions''}
\address{Mathematisches Institut, Universit\"at M\"unster\\
Einsteinstra{\ss}e 62\\
48149 M\"unster\\
Bundesrepublik Deutschland}
\email{g\_fren01@uni-muenster.de}
\date{\today}
\keywords{}
\begin{document}

\begin{abstract}
In this article, we give a complete and self--contained account of Chernysh's strengthening \cite{Chernysh} of the Gromov--Lawson surgery theorem \cite{GroLaw} for metrics of positive scalar curvature. No claim of originality is made.
\end{abstract}

\maketitle

\tableofcontents

\section{Introduction}

A famous result by Gromov--Lawson \cite{GroLaw} and Schoen--Yau \cite{SchYau} states that if $M^d$ is a closed manifold with a metric of positive scalar curvature and $\varphi: S^{d-k} \times \bR^{k} \to M$ a surgery datum of codimension $k \geq 3$, then the surgered manifold $M_\varphi: =M \setminus (S^{d-k} \times D^k) \cup_{S^{d-k}\times S^{k-1}} D^{d-k+1} \times S^{k-1}$ does have a metric of positive scalar curvature as well. This has been the basis for virtually all existence results for psc metrics on high-dimensional manifolds, the most prominent of which is \cite{Stolz}. 

A strengthening of the surgery theorem has been proven by Chernysh \cite{Chernysh}, based on Gromov--Lawson's proof. His result implies that the two spaces $\Riem^+ (M)$ and $\Riem^+ (M_\varphi)$ of psc metrics have the same homotopy type if in addition to $k \geq 3$ the condition $d-k+1\geq 3$ is also satisfied. 

To state Chernysh's theorems in full generality, some preliminaries are needed. In order to keep the length of this introduction at bay, we state the results somewhat informally and refer to the main body of the paper for precise definitions. 
We consider Riemannian metrics on compact manifolds with boundary (it is always assumed that the boundaries are equipped with collars). Let $\Riem(M)$ be the space of all Riemannian metrics $h$ on $M$ such that $h = g+dt^2$ near $\partial M$, for some metric $g$ on $\partial M$, and with respect to the given collar. Let $\Riem^+(M) \subset \Riem(M)$ be the subspace of metrics of positive scalar curvature. If $h\in \Riem^+ (M)$ is of the form $g+dt^2$ near $\partial M$, then $g$ has positive scalar curvature as well, and hence mapping $h$ to $g$ defines a continuous restriction map
\[
 \res: \Riem^+ (M) \to \Riem^+ (\partial M).
\]
We define 
\[
 \Riem^+ (M)_g := \res^{-1} (g),
\]
the space of all Riemannian metrics of positive scalar curvature on $M$ which near $\partial M $ are equal to $g+dt^2$.
\begin{thm}[Chernysh \cite{Chernysh2}]\label{thm:improved-chernysh-theorem}
The restriction map $\res:\Riem^+ (M) \to \Riem^+ (\partial M)$ is a Serre fibration.
\end{thm}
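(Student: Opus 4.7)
The plan is to verify the homotopy lifting property for disks: given a continuous map $\tilde{h}_0 : D^n \to \Riem^+(M)$ and a homotopy $H: D^n \times [0,1] \to \Riem^+(\partial M)$ with $\res \circ \tilde{h}_0 = H(\cdot, 0)$, one must produce a continuous lift $\tilde{H}: D^n \times [0,1] \to \Riem^+(M)$ extending $\tilde{h}_0$ and covering $H$. The lift will agree with $\tilde{h}_0$ away from $\partial M$ and modify the metric only within a fixed collar to realise the required boundary data.

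Fix a collar $c: \partial M \times [0, 2) \hookrightarrow M$ as in the definition of $\Riem(M)$. By compactness of $D^n$ and the product structure of each $\tilde{h}_0(x)$ near $\partial M$, there is a uniform $a > 0$ such that $\tilde{h}_0(x) = g_0(x) + dt^2$ on $c(\partial M \times [0, a])$ for every $x$, where $g_0(x) := H(x, 0)$. Outside the collar set $\tilde{H}(x, s) := \tilde{h}_0(x)$, and inside declare
\[
\tilde{H}(x, s)|_{\partial M \times [0, a]} := G(x, s, t) + dt^2,
\]
where $G$ is a smooth family of metrics on $\partial M$ with $G(x, s, 0) = H(x, s)$, $G(x, s, a) = g_0(x)$, and $G(x, 0, t) \equiv g_0(x)$ so that $\tilde{H}(\cdot, 0) = \tilde{h}_0$.

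The main obstacle is to ensure that the cylinder metric $G + dt^2$ has positive scalar curvature. A direct computation shows that $\scal(G + dt^2)$ differs from $\scal(G)$ by terms involving $\partial_t G$ and $\partial_t^2 G$, so positivity is preserved whenever these derivatives are small compared to $\scal(G)$. A naive choice such as $G(x, s, t) := \chi(t) H(x, s) + (1 - \chi(t)) g_0(x)$ with a single fixed cutoff $\chi$ does not achieve this uniformly in $s$, since $H(x, s) - g_0(x)$ need not be small for $s$ away from $0$.

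This difficulty is overcome by subdividing $[0,1]$ into finitely many intervals $0 = s_0 < s_1 < \cdots < s_N = 1$ chosen small enough (using compactness of $D^n \times [0,1]$ and uniform continuity of $H$) that for every $x \in D^n$ and $s, s' \in [s_{i-1}, s_i]$ the metrics $H(x, s), H(x, s')$ are uniformly $C^2$-close. One then constructs the lift inductively: having extended $\tilde{H}$ over $[0, s_{i-1}]$ in such a way that on a nested sub-collar $\partial M \times [0, \tau_{i-1}]$ of uniformly positive width the metric is the product $H(\cdot, s_{i-1}) + dt^2$, extend it over $[s_{i-1}, s_i]$ by the above construction on a smaller sub-collar $\partial M \times [0, \tau_i]$ with $\tau_i = \tau_{i-1}/2$, interpolating between the uniformly $C^2$-close metrics $H(x, s)$ and $H(x, s_{i-1})$. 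The uniform closeness guarantees both that the interpolation stays in $\Riem^+(\partial M)$ and that the time-derivative bounds hold, so that the cylinder has positive scalar curvature. The remaining technical point is the book-keeping of the cutoff widths and the continuity of the lift at the subdivision points $s_i$, arranged by taking each cutoff $\chi_i$ to be identically $1$ on $[0, \tau_{i+1}]$ so that the constructions on consecutive subintervals match.
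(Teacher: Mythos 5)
Your overall strategy matches the paper's: verify the homotopy lifting property for disks, use compactness to place the given family in some $\Riem^+(M)^a$, keep the metric fixed away from a collar, and realize the prescribed boundary data only inside the collar. You also correctly identify the crux, namely that the scalar curvature of $G(x,s,t)+dt^2$ is controlled by the $t$-derivatives of $G$ (cf.\ Lemma \ref{lem:scalar-curvature-of-trace} and Lemma \ref{gajer-lemma} in the paper). However, the concrete device you use to beat this difficulty does not work, and it is exactly at this point that the paper introduces an idea you are missing.

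The problem is that you insist on a product metric $G(x,s,t)+dt^2$ throughout a collar of \emph{fixed} width $a$, and you run an induction in which the $i$-th transition happens over a region of width $\tau_{i-1}-\tau_i = a\,2^{-i}$. A convex-combination interpolation between two metrics whose $C^2$-distance is $\eps$ over a window of width $w$ has second $t$-derivative of size roughly $\eps/w^2$; at step $i$ this is $\eps\, 4^i/a^2$. The quantity $\Lambda$ coming from Lemma \ref{gajer-lemma} that these derivatives must not exceed is determined by the psc reserve of the family and is fixed. The uniform-continuity subdivision makes $\eps$ small at best like $1/N$, while $4^N$ grows exponentially, so the constraint $\eps\,4^N/a^2 \le \Lambda$ can never be met by any choice of $N$. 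This is not an artefact of the factor $1/2$: with equal-width transitions one gets $\eps N^2/a^2$ against $\eps\sim 1/N$, which also diverges, and a Cauchy–Schwarz argument shows that the total transition length needed for a product-type psc lift is at least of order $\sqrt{L/\Lambda}$, where $L$ bounds the $C^2$-variation of $H$; this has no reason to be smaller than the collar width $a$. So the step ``the uniform closeness guarantees \dots\ that the cylinder has positive scalar curvature'' is false as stated.

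What the paper does differently (\S\ref{subsec:fibrationtheorem}) is precisely to escape the fixed-width, product-type constraint. One first smooths the path in the $t$-direction (Lemma \ref{lem:chernysh-thm-proof-approximation}), producing $C(p,s,t)$ together with a function $\Lambda(s)$ controlling how slowly the reparametrization must move. Then one writes down the psc metric $L(p,s)=dt^2 + C(p,s,f(t/a(s)))$ on a \emph{long} cylinder $[0,b]\times\partial M$, where $b$ is chosen large enough that the reparametrization $t\mapsto f(t/a(s))$ obeys the bound $\Lambda(s)$. Finally one pulls $L(p,s)$ back to $[0,\delta]\times\partial M$ by a diffeomorphism $h:[0,\delta]\to[0,b]$ with $h'=1$ only near the endpoints. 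The pulled-back metric $(h\times\id)^*L(p,s)$ is isometric to $L(p,s)$, hence psc, but it is \emph{not} of product type on the interior of the collar; it is only cylindrical near the two ends, which is all that $\Riem^+(M)$ requires. This rescaling/diffeomorphism step is the essential ingredient that your proposal lacks, and without it the interpolation inside a fixed collar cannot in general be made psc.
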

In fact, this is a slight improvement of the main result of \cite{Chernysh2}, where it is only shown that $\res$ is a quasifibration. The proof of Theorem \ref{thm:improved-chernysh-theorem} is given in \S \ref{subsec:fibrationtheorem} and follows largely the idea of \cite{Chernysh2}.

Now let $N$ be a compact manifold with collared boundary and let $\varphi:N \times \bR^k \to M$ be an open embedding such that $\varphi^{-1}(\partial M)=\partial N \times \bR^k$ and such that $\varphi$ is compatible with the chosen collars of $M$ and $N$. Let $g_N \in \Riem (N)$ be a Riemannian metric on $N$, not necessarily of positive scalar curvature.

Let $g_\torp^k$ be a \emph{torpedo metric} on $\bR^k$ such that $\scal (g_\torp^k) + \scal(g_N)= \scal (g_N + g_\torp^k) >0$. The precise definition of a torpedo metric will be given in \eqref{defn:torpedometrix} below, and for the time being, let us only list the most important features. Firstly, $g_\torp^k$ is an $O(k)$-invariant metric on $\bR^k$. Secondly, let $\psi: (0,\infty) \times S^{k-1} \to \bR^k$ be the polar coordinate map and $d\xi^2$ be the round metric on $S^{k-1}$. We require that $\psi^* g_\torp^k = dt^2 + \delta d\xi^2$ on $[R,\infty) \times S^{k-1}$ for some $R>0$ and $\delta>0$. Thirdly, $\scal(g_\torp^k) \geq \frac{1}{\delta^2}(k-1)(k-2)$. We define the subspace 
\[
 \Riem^+ (M,\varphi) := \{ h \in \Riem^+ (M) \vert \varphi^* h |_{N \times B_R^k}= (g_N + g_\torp^k)|_{N \times B_R^k} \} \subset \Riem^+ (M) .
\]
\begin{thm}[Chernysh \cite{Chernysh}, see also Walsh \cite{WalshC}]\label{thm:main}
Let $\varphi : N \times \bR^k \to  M$ be an open embedding as before with $k \geq 3$. Let $g_N \in \Riem (M)$ be a Riemannian metric on $N$, not necessarily of positive scalar curvature. Let $g_\torp^k$ be a torpedo metric on $\bR^k$ so that the product metric $g_N + g_\torp^k$ on $N \times \bR^k$ has positive scalar curvature. Then the inclusion map
\[
\Riem^+ (M,\varphi) \to \Riem^+ (M)
\]
is a weak homotopy equivalence.
\end{thm}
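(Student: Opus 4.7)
The plan is to verify the weak equivalence by checking the standard relative lifting criterion: for every compact CW-pair $(K,L)$ (e.g.\ $K = D^n$, $L = S^{n-1}$) and every continuous family $(h_s)_{s \in K} \subset \Riem^+(M)$ with $(h_s)_{s\in L} \subset \Riem^+(M,\varphi)$, I would construct a homotopy $(h_s^t)_{t \in [0,1]}$ through $\Riem^+(M)$, stationary on $L$, landing in $\Riem^+(M,\varphi)$ at $t=1$. To organize the argument, I would introduce the intermediate space
\[
 \Riem^+_{\mathrm{std}}(M,\varphi) := \{ h \in \Riem^+(M) \mid \varphi^* h|_{N \times B_R^k} = g + g_\torp^k \text{ for some } g \in \Riem(N) \}
\]
and show that both inclusions $\Riem^+(M,\varphi) \hookrightarrow \Riem^+_{\mathrm{std}}(M,\varphi) \hookrightarrow \Riem^+(M)$ are weak equivalences.

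For the outer inclusion I would run a parameterized Gromov--Lawson bending procedure. Given $h \in \Riem^+(M)$, view $\varphi(N\times\{0\})$ as an embedded submanifold of codimension $k\geq 3$; bend the distance hypersurfaces in the normal $\bR^k$-directions along a suitable profile curve so that the deformed metric agrees with a product $g + g_\torp^k$ on $\varphi(N \times B_R^k)$, where the resulting $g$ on $N$ is determined by the construction. The codimension condition $k\geq 3$ ensures that the loss of principal curvatures along the bending is compensated by the gain in scalar curvature coming from the normal $S^{k-1}$-factor, so positivity persists. Since the relevant $C^2$-bounds are uniform on compact families, the profile parameters can be chosen continuously in $s \in K$, yielding the required homotopy. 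To make it stationary on $L$, I would multiply the bending parameter by a cutoff function on $K$ that vanishes on a neighborhood of $L$.

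For the inner inclusion the metrics already decompose as $g_s + g_\torp^k$ on the tube, with $g_s = g_N$ on $L$, and one must deform $g_s$ into $g_N$ keeping the glued metric on $M$ psc. Here I would invoke Theorem~\ref{thm:improved-chernysh-theorem}: cut $M$ along the hypersurface $\varphi(N \times S_R^{k-1})$ to obtain a compact manifold $M'$ with collared boundary containing the image $\varphi(N \times \overline{B_R^k})$, and apply the fibration theorem to restriction maps on $M'$ and on its complement. After possibly rescaling the torpedo part (a thinner torpedo has arbitrarily large scalar curvature and can absorb any finite perturbation of the $N$-factor), a straight-line homotopy $g_s^t := (1-t)g_s + t g_N$ lifts to a psc homotopy of metrics on $M$, rel $L$, via the homotopy-lifting property guaranteed by the Serre fibration.

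The main obstacle I anticipate is carrying out Stage 1 cleanly in families. The Gromov--Lawson bending depends on choices (cutoff widths, profile parameters, comparison of $C^2$-bounds), and the output metric generally differs from the input even when the input is already in $\Riem^+_{\mathrm{std}}(M,\varphi)$; this is precisely where Chernysh's refinement goes beyond the original surgery theorem. The cleanest way to handle the rel-$L$ condition is to introduce a \emph{two-parameter} deformation—one parameter running the bending, a second parameter interpolating between the bent metric and a tautological standardization of metrics already in $\Riem^+_{\mathrm{std}}(M,\varphi)$—and then apply a cutoff in $K$ to switch off the first parameter near $L$. Verifying positivity of scalar curvature throughout this two-parameter family is the bulk of the analytic work; Stage 2 in contrast is essentially formal, being a homotopy-lift via Theorem~\ref{thm:improved-chernysh-theorem} plus a scaling trick.
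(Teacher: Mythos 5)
Your two-step decomposition does not quite match what the Gromov--Lawson bending actually produces, and this mismatch hides the hardest part of the proof. After the bending and the long-tube extension (which, following Gajer, interpolates the restriction to the small normal sphere back to the product $g_N + (\text{cylinder})$), the metric on $\varphi(N\times B^k_R)$ has the form $g_N + g_0$ where $g_0$ is a \emph{rotationally symmetric, normalized} metric on $B^k_R$ of positive scalar curvature --- but $g_0$ is emphatically \emph{not} the torpedo $g^k_\torp$. Its warping function records the profile curve and the original metric near $N\times\{0\}$, and there is no freedom in the construction to force it to equal $h_\delta$. So Stage 1 lands you in the intermediate space
\[
\Riem^+_{rot}(M) = \{\,h : h|_{N\times B^k_R} = g_N + g_0,\ g_0 \text{ rotationally symmetric, normalized, } \scal(g_0)>0\,\},
\]
with the $N$-factor already fixed at $g_N$, rather than in your $\Riem^+_{\mathrm{std}}(M,\varphi)$ with the torpedo fixed and the $N$-factor free. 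Consequently your Stage 2 --- straight-line interpolating the $N$-factor to $g_N$, lifted via the fibration theorem after shrinking the torpedo --- solves a sub-problem that Stage 1 has already handled, and leaves untouched the step that actually requires Chernysh's new ideas: deforming an arbitrary rotationally symmetric psc warping function $f$ on $B^k_R$ to the fixed torpedo function $h_\delta$, through warping functions keeping $g_N + (dt^2 + f_\lambda^2 d\xi^2)$ of positive scalar curvature, relative to families for which $f = h_\delta$ already. That deformation is genuinely analytic (one must control $\sigma(f_\lambda) = (k-1)\bigl((k-2)\tfrac{1-f_\lambda'^2}{f_\lambda^2} - 2\tfrac{f_\lambda''}{f_\lambda}\bigr)$ against the negative part of $\scal(g_N)$), and it is not accessible to the fibration theorem plus a torpedo-thinning trick: the fibration theorem lifts a path of boundary conditions to a path of psc metrics on the other piece, but it gives no way to force the lift to remain of product form $g_N + (\text{rotationally symmetric})$ on the tube, which is what the definition of $\Riem^+_{rot}(M)$ and $\Riem^+(M,\varphi)$ requires. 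In short: your intermediate space is the wrong one, your Stage 2 addresses a triviality, and the core difficulty --- the warping-function deformation, with its interplay of sloping and bending functions, explicit scalar curvature estimates, and careful management of rel-boundary conditions --- is missing entirely. (Your observation that the fibration theorem plus collar stretching handles the fixed-boundary variant, i.e.\ Theorem~\ref{thm:main-boundary} given Theorem~\ref{thm:main}, is correct, but that is a separate reduction, not a substitute for the warping-function argument.)
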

The main bulk of this paper is devoted to a detailed discussion of the proof of Theorem \ref{thm:main}. 
\begin{rem}
What Gromov and Lawson proved is that under the hypotheses of Theorem \ref{thm:main} and for closed $N$, $\Riem^+ (M,\varphi) \neq \emptyset$, provided that $\Riem^+ (M) \neq \emptyset$. Later, Gajer \cite{Gajer} improved their result and proved that the inclusion map $\Riem^+ (M,\varphi) \to \Riem^+ (M)$ is $0$-connected. 
\end{rem}
\begin{rem}\label{rem:surgery.invariance}
If $N = S^{d-k}$ and $g_N$ is the round metric, one obtains a zig-zag
\[
\Riem^+ (M) \stackrel{\simeq}{\leftarrow} \Riem^+ (M, \varphi) \cong \Riem^+ (M_\varphi,\varphi') \to \Riem^+ (M_\varphi),
\]
where $\varphi': S^{k-1} \times \bR^{d-k+1} \to M_\varphi$ is the opposite surgery datum. It follows that $\Riem^+ (M_\varphi)\neq \emptyset$ and $\Riem^+ (M) \simeq \Riem^+ (M_\varphi)$ if $3 \leq k \leq n-2$.
\end{rem}
More generally, Theorem \ref{thm:main} implies the following cobordism invariance result.
\begin{thm}\label{thm:bordismapplication}
Let $\theta:B \to BO(d)$ be a fibration, $d \geq 6$. Assume that $M_i$, $i=0,1$, are two closed $(d-1)$-dimensional $\theta$-manifolds which are $\theta$-cobordant. Then
\begin{enumerate}
\item if the structure map $M_1 \to B$ is $2$-connected, then there is a map $\Riem^+ (M_0)\to \Riem^+ (M_1)$ (in particular, if $\Riem^+ (M_0)\neq \emptyset$, then $\Riem^+ (M_1) \neq \emptyset$).
\item If in addition the structure map $M_0 \to B$ is $2$-connected as well, then $\Riem^+ (M_0)\simeq \Riem^+ (M_1)$.
\end{enumerate}
\end{thm}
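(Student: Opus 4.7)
The plan is to decompose a $\theta$-cobordism $W$ from $M_0$ to $M_1$ into elementary surgeries whose codimensions all lie in a range where Theorem \ref{thm:main} applies, and then string together copies of the zig-zag of Remark \ref{rem:surgery.invariance}.

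First, I would fix a $\theta$-cobordism $W$ between $M_0$ and $M_1$ and, using Kreck's standard technique, perform $\theta$-equivariant surgeries in the interior of $W$ to arrange that the reference map $W \to B$ is $2$-connected. Under the hypothesis of part (1) that $M_1 \to B$ is $2$-connected, the inclusion $M_1 \hookrightarrow W$ is then also $2$-connected, so by Smale's handle-trading (which needs $d\geq 6$) $W$ admits a handle decomposition relative to $M_1$ with only handles of index $\geq 3$. Dually, $W$ is obtained from $M_0$ by attaching handles of index $\leq d-3$. Writing $M_0 = N_0, N_1, \dots, N_r = M_1$ for the sequence of intermediate level manifolds, each passage $N_i \rightsquigarrow N_{i+1}$ corresponds to a single surgery on $N_i$ of codimension $k_i = d - j_i \geq 3$, where $j_i$ is the index of the attached handle.

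For each such $i$, Theorem \ref{thm:main} gives that the inclusion $\Riem^+(N_i, \varphi_i) \hookrightarrow \Riem^+(N_i)$ is a weak equivalence, and the torpedo construction identifies $\Riem^+(N_i, \varphi_i) \cong \Riem^+(N_{i+1}, \varphi_i')$ for the opposite surgery datum $\varphi_i'$; composing with the inclusion $\Riem^+(N_{i+1}, \varphi_i') \hookrightarrow \Riem^+(N_{i+1})$ and formally inverting the leftmost weak equivalence produces a morphism $\Riem^+(N_i) \to \Riem^+(N_{i+1})$ in the homotopy category. Concatenating these over $i$ proves (1). For part (2), the extra assumption that $M_0 \to B$ is also $2$-connected allows me to trade handles at both ends, producing a handle decomposition whose indices all lie in $[3, d-3]$; here $d\geq 6$ is crucial to have this range nonempty. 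Now for each $i$ the opposite surgery on $N_{i+1}$ also has codimension $\geq 3$, so Theorem \ref{thm:main} makes the second inclusion $\Riem^+(N_{i+1}, \varphi_i') \hookrightarrow \Riem^+(N_{i+1})$ a weak equivalence as well. Every arrow in the concatenated zig-zag is then a weak equivalence, giving $\Riem^+(M_0) \simeq \Riem^+(M_1)$.

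The main obstacle is the combinatorial handling of the handle decomposition: one must simultaneously arrange $W \to B$ to be $2$-connected (via interior $\theta$-surgeries) and then trade away low- and, for (2), high-index handles via the Whitney trick while preserving the $\theta$-structure and fixing the boundary. These are classical manoeuvres in smooth surgery theory, but they are where the hypothesis $d\geq 6$ enters essentially; all of the PSC-analytic content has been packaged into Theorem \ref{thm:main}.
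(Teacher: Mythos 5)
Your overall strategy coincides with the paper's: reduce to a handle decomposition of the cobordism with indices in $[3,d-3]$ (or in $[0,d-3]$ relative to $M_0$ for part (1)), and string together the zig-zags of Remark \ref{rem:surgery.invariance} using Theorem \ref{thm:main}. This is precisely Corollary \ref{cor:handletype-weak} together with Propositions \ref{prop:get2-connectedcobordism} and \ref{prop:handletrading}. However, there are two genuine gaps in the way you propose to obtain that handle decomposition, and both are exactly the places where the paper has to do real work.

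First, it is not true that making $W \to B$ two-connected by interior $\theta$-surgeries forces the inclusion $M_1 \hookrightarrow W$ to be two-connected. From $W \to B$ and $M_1 \to B$ both being two-connected you get isomorphisms on $\pi_0$ and $\pi_1$, but on $\pi_2$ you only know that both $\pi_2(M_1)$ and $\pi_2(W)$ surject onto $\pi_2(B)$, which does not make $\pi_2(M_1) \to \pi_2(W)$ surjective. You would be done if you could make $W \to B$ three-connected, but $B$ is only assumed to be of type $(F_2)$, so $\pi_3(B)$ need not be finitely generated over $\bZ[\pi_1 B]$ and the surgery-below-the-middle-dimension argument stops. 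The paper's Proposition \ref{prop:get2-connectedcobordism} closes this hole: after making $W \to B$ two-connected, one uses finite generation of $\pi_2(W,M_0)$ over $\bZ[\pi]$ and a diagram chase to find generators of $\pi_2(W,M_0)$ that lift to classes in $\pi_2(W)$ mapping to zero in $\pi_2(B)$; surgering these kills $\pi_2(W,M_0)$ while keeping the $\theta$-structure, and then one repeats from the $M_1$ side using a general position argument to avoid destroying the two-connectivity of $M_0 \to W$.

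Second, for part (2) with $d=6$, "trade handles at both ends" does not produce a handle decomposition with all indices in $[3,3]$. Handle trading exchanges a $2$-handle only for a $4$-handle, not a $3$-handle, so it brings you to handle type $[2,4]$ (or $[2,3]$ after trading on one side), and one cannot do better in general: a $6$-dimensional $h$-cobordism with nontrivial Whitehead torsion has both inclusions $\infty$-connected but cannot be given a handle decomposition with handles only in index $3$. The paper's Proposition \ref{prop:handletrading}(2) fixes this by first stabilizing $W$ by interior connected sums with copies of $S^3\times S^3$ (Kreck's stabilization, following \cite{Kreck} and \cite{GRW}); the stabilized cobordism then has handle type $[3,3]$, and since the stabilization does not change $\partial W$ the argument goes through. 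These two points are not "classical manoeuvres" that can be waved at; they are the content of Propositions \ref{prop:get2-connectedcobordism} and \ref{prop:handletrading}.
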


The best-known special case is $B = B \Spin (d)$. In that case, the hypothesis that $M_i\to B\Spin (d)$ is $2$-connected just means that $M_i$ is simply connected. For such manifolds, Theorem \ref{thm:bordismapplication} follows in a straightforward manner from Theorem \ref{thm:main} and the proof of the h-cobordism theorem (see e.g. \cite[Theorem VIII.4.1]{Kos}), as explained in \cite[\S 4]{WalshC}. The general case requires techniques from surgery and handlebody theory which are not so well--known, which is why we include the proof in \S \ref{sec:coboridmsection}. 

Chernysh also proved a version of Theorem \ref{thm:main} for a fixed boundary condition, which is used in an essential way in \cite{ERW17}. To state it, let $\partial \varphi: \partial N\times\bR^k \to \partial M$ be the induced embedding and let $g \in \Riem^+ (\partial M, \partial \varphi)$ be a fixed boundary condition. We let 
\[
\Riem^+ (M,\varphi)_g := \Riem^+ (M,\varphi) \cap \Riem^+ (M)_g.
\]
\begin{thm}[Chernysh \cite{Chernysh2}]\label{thm:main-boundary}
Under the hypotheses of Theorem \ref{thm:main}, the inclusion map
\[
\Riem^+ (M,\varphi)_g \to \Riem^+ (M)_g
\]
is a weak homotopy equivalence.
\end{thm}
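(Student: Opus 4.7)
The plan is to deduce Theorem \ref{thm:main-boundary} from the unconstrained version Theorem \ref{thm:main} together with Theorem \ref{thm:improved-chernysh-theorem}, by comparing two fibration sequences. Restriction of a metric to its boundary collar yields a commutative square
\[
\xymatrix{
\Riem^+ (M,\varphi) \ar[r] \ar[d]_{\res} & \Riem^+ (M) \ar[d]^{\res} \\
\Riem^+ (\partial M, \partial \varphi) \ar[r] & \Riem^+ (\partial M)
}
\]
in which the horizontal arrows are the forgetful inclusions. The top arrow is a weak equivalence by Theorem \ref{thm:main}, and the bottom arrow is a weak equivalence by Theorem \ref{thm:main} applied to the closed manifold $\partial M$ equipped with the induced codimension-$k$ surgery datum $\partial \varphi : \partial N \times \bR^k \to \partial M$ (if $\partial N = \emptyset$, then $\partial \varphi$ is empty and the bottom arrow is the identity). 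The right vertical arrow is a Serre fibration by Theorem \ref{thm:improved-chernysh-theorem}.

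The crucial step is to verify that the left vertical arrow $\res : \Riem^+ (M,\varphi) \to \Riem^+ (\partial M, \partial \varphi)$ is also a Serre fibration. I would establish this by rerunning the proof of Theorem \ref{thm:improved-chernysh-theorem} from \S\ref{subsec:fibrationtheorem} in the constrained setting: given a lifting problem for a disk, the collar bending and rescaling constructions used by Chernysh to extend a homotopy of boundary metrics to a homotopy of interior metrics must be organized so as not to destroy the prescribed form $g_N + g_\torp^k$ on $\varphi(N \times B_R^k)$. This is essentially automatic because $\varphi$ is compatible with collars and the torpedo metric $g_\torp^k$ is a cylindrical product with $dt^2$ along the collar direction of $N$, so all perturbations applied near $\partial M$ split off the $g_\torp^k$-factor on the surgery locus.

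Granting this, the square becomes a map of Serre fibrations in which both base maps and the total-space map in question are already known; in particular both horizontal maps are weak equivalences. Applying the five lemma to the ladder of long exact sequences of homotopy groups (component by component over each path component of the base), the induced map between the fibers over a chosen point $g \in \Riem^+ (\partial M, \partial \varphi)$ is a weak equivalence. By construction these fibers are exactly $\Riem^+ (M,\varphi)_g$ and $\Riem^+ (M)_g$, which proves the theorem.

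The main obstacle is the constrained fibration statement. This is not genuinely new theory but an adaptation of Chernysh's argument, and the only delicate point is bookkeeping: one must check that every auxiliary deformation appearing in the proof of Theorem \ref{thm:improved-chernysh-theorem} can be chosen to preserve the product form on $\varphi(N \times B_R^k)$. Once that compatibility is verified, the comparison of fibrations formally yields Theorem \ref{thm:main-boundary}.
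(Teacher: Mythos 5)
Your overall strategy — compare two fibration sequences via restriction and apply the five lemma to conclude the map on fibers is a weak equivalence — is indeed the strategy the paper follows in \S\ref{sec:maintheoremboundary}. However, the crucial step that you describe as ``essentially automatic'' has a genuine gap, and the paper's proof is structured precisely to avoid it.

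The problem is with your claim that $\res\colon\Riem^+(M,\varphi)\to\Riem^+(\partial M,\partial\varphi)$ is a Serre fibration ``by rerunning the proof of Theorem~\ref{thm:improved-chernysh-theorem}.'' In that proof, the lifted metric is produced by gluing the interpolating cylinder metric $L(p,s)=dt^2+C(p,s,f(t/a(s)))$ on $[0,b]\times \partial M$ onto the original metric, but $b$ does not match the available collar width $\delta$, so one must pull back by a reparametrizing diffeomorphism $h\colon[0,\delta]\to[0,b]$ with non-constant derivative. On the constrained region $\partial\varphi(\partial N\times B_R^k)\times[0,\delta]$, the interpolation $C$ is constant (equal to $g_{\partial N}+g_\torp^k$) as you observe, so $L(p,s)$ there is $dt^2+g_{\partial N}+g_\torp^k$; but after pulling back by $(h\times\id)$ this becomes $h'(t)^2\,dt^2+g_{\partial N}+g_\torp^k$, which is \emph{not} of the form $g_N+g_\torp^k$ unless $h'\equiv 1$. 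Thus the lift produced by that argument fails to land in $\Riem^+(M,\varphi)$, and the constrained restriction map is not shown to be a fibration by this route.

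The paper sidesteps exactly this difficulty via the external-collar construction: it introduces $W_r:=W\cup(\partial W\times[0,r])$ and the corresponding constrained spaces $\Riem^+(W_r,\varphi_r)$, and proves the fibration statement only for $\colim_{r\to\infty}\Riem^+(W_r,\varphi_r)\to\Riem^+(\partial W,\partial\varphi)$ (Lemma~\ref{lem:proof-of-oundedcases}). Because one glues the cylinder $L(p,s)$ onto the outside of $W_r$ rather than squeezing it into a fixed collar, no rescaling $h$ is needed and the torpedo constraint is preserved on the nose. The price is that one must then relate $\Riem^+(W,\varphi)_g$ to $\colim_r\Riem^+(W_r,\varphi_r)_g$; this is where the corner-smoothing result from \cite[Corollary 2.5.4]{ERW17} enters, a step absent from your outline. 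So the missing idea is not a bookkeeping detail but a change of model (external collars and a colimit) specifically designed to make the constrained lifting problem solvable, together with the auxiliary input needed to translate the conclusion back to $\Riem^+(W,\varphi)_g$.
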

The proof of Theorem \ref{thm:main-boundary} is only sketched in \cite{Chernysh2}. We give a detailed proof, somewhat different from the proof envisioned in \cite{Chernysh2}, in \S \ref{sec:maintheoremboundary}. Besides Theorems \ref{thm:main} and \ref{thm:improved-chernysh-theorem}, the proof uses the (elementary) corner smoothing technique which was developped in \cite[\S 2]{ERW17}.

When \cite{Chernysh} appeared, his result was apparently perceived as a curiosity and drew little attention. This has changed in recent years: Theorem \ref{thm:main} is an irreplacable ingredient in the papers \cite{BERW} and \cite{ERW17}. 
Important parts of \cite{Chernysh} are written in a fairly obscure way, and the paper has never been published. Later Walsh published a paper \cite{WalshC} containing a proof of Theorem \ref{thm:main}, but many relevant details are not addressed in \cite{WalshC}. Because of the importance of the result for \cite{BERW} and \cite{ERW17}, the first named autor wanted to make sure that the result is correct and that he understands the proof properly. He suggested checking \cite{Chernysh} and \cite{WalshC} as a project for the second author's Master's thesis. The present paper is the result of this checking process. Let us summarize our findings. 

\begin{enumerate}
\item One half of the proof of Theorem \ref{thm:main} is virtually identical to the proof of the original Gromov--Lawson result. We found one small computational error, which is reproduced in various expositions of the result (\cite{RosSto}, \cite{Walsh11}). This error looks harmless at first sight, but enforces an alternative argument at one key juncture of the proof. 
\item All other arguments in Chernysh's paper are essentially correct and complete, albeit some parts of his paper are very intransparent and hard to decipher.
\item \cite{WalshC} leaves many questions open. In particular, it remains unclear to us how to fill in the details of the proof of Lemma 3.3 loc.cit., without using the quite technical computations of \cite[\S 3]{Chernysh} or computations of a similar delicacy.
\end{enumerate}

\section{Preliminary material}

\subsection{Spaces of psc metrics on manifolds with boundary}

Let $M$ be a compact manifold with boundary $\partial M$. We assume that the boundary of $M$ comes equipped with a collar $\partial M \times [0,1) \to M$. The collar identifies $\partial M \times [0,1) $ with an open subset of $M$ and we usually use this identification without further mentioning.

We only consider Riemannian metrics on $M$ which have a simple structure near $\partial M$. More precisely, for $c\in (0,1)$, we denote by $\Riem (M)^c$ the space of all Riemannian metrics $h$ on $M$ such that $h = g+dt^2$ on $\partial M \times [0,c]$ for some metric $g$ on $\partial M$. 

We topologize $\Riem(M)^c$ as a subspace of the Fr\'echet space of smooth symmetric $(2,0)$-tensor fields on $M$, with the usual $C^\infty$-topology.

Now let $\Riem^+ (M)^c \subset \Riem (M)^c$ be the subspace of all Riemannian metrics with positive scalar curvature (this is an open subspace). It follows from \cite[Theorem 13]{Palais} and \cite[Proposition A.11]{Hatcher} that $\Riem^+ (M)^c$ has the homotopy type of a CW complex.

If $h \in \Riem^+ (M)^{c}$ and $h=g+dt^2$ on $\partial M \times [0,c]$, then $\scal(g+dt^2 )=\scal(g)$ and so the metric $g$ on $\partial M$ necessarily has positive scalar curvature. This defines a restriction map
\[
 \res^{c}: \Riem^+ (M)^{c} \to \Riem^+ (\partial M),
\]
which is continuous. We define 
\[
 \Riem^+ (M)_g^{c} := (\res^{c})^{-1} (g),
\]
the space of all psc metrics on $M$ which on $\partial M \times [0,c]$ are equal to $g+dt^2$. Moreover, we define 
\[
\Riem^+ (M):= \colim_{c \to 0} \Riem^+ (M)^c
\]
and 
\[
\Riem^+ (M)_g:= \colim_{c \to 0} \Riem^+ (M)^c_g.
\]
If $b >c$, then $\Riem^+ (M)^b \subset \Riem^+ (M)^c$ and $\Riem^+ (M)_g^b \subset \Riem^+ (M)_g^c$, and it is elementary to see that the inclusion maps are homotopy equivalences \cite[Lemma 2.1]{BERW}. 

The restriction maps induce a restriction map
\[
  \res: \Riem^+ (M) \to \Riem^+ (\partial M)
\]
on the colimit, and there is a continuous bijection
\begin{equation}\label{continuous-bijections}
\Riem^+ (M)_g \to \res^{-1}(g). 
\end{equation}
There is no a priori reason why \eqref{continuous-bijections} should be a homeomorphism. However:
\begin{lem}\label{lem:cont-bijection-equivalence}
The map \eqref{continuous-bijections} is a weak homotopy equivalence. 
\end{lem}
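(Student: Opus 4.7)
The plan is to reduce the statement to the following key lemma: every continuous map from a compact Hausdorff space $K$ (in particular from $S^n$ or $S^n\times[0,1]$) to $\Riem^+(M)$ factors continuously through $\Riem^+(M)^c$ for some $c>0$. Granting this, both halves of the weak equivalence claim are immediate. For surjectivity of $\pi_n$, represent a class by $f:S^n\to\res^{-1}(g)$ and view it as a map to $\Riem^+(M)$; the resulting factoring $\tilde f:S^n\to\Riem^+(M)^c$ satisfies $\res^c\circ\tilde f = \res\circ f = g$, so $\tilde f$ corestricts to $(\res^c)^{-1}(g) = \Riem^+(M)^c_g$, and composition with the structure map $\Riem^+(M)^c_g\to\Riem^+(M)_g$ gives a continuous lift of $f$. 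The same argument applied to a homotopy $S^n\times[0,1]\to\res^{-1}(g)$ produces a homotopy in $\Riem^+(M)_g$ (basepoints cause no trouble, as the two spaces agree set-theoretically), giving injectivity.

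The key lemma is a standard fact about sequential colimits along closed $T_1$-embeddings. The directed system over $c\in(0,1)$ is cofinal in the sequential system $\Riem^+(M) = \colim_n \Riem^+(M)^{1/n}$, and each inclusion $\Riem^+(M)^{1/n}\hookrightarrow\Riem^+(M)^{1/(n+1)}$ is a closed embedding, because the requirement that $h = g+dt^2$ on the closed set $\partial M\times[0,1/n]$ is a closed condition in the Fr\'echet $C^\infty$-topology. From this one deduces (by unwinding the definition of the colimit topology) that each structure map $\Riem^+(M)^{1/n}\to\Riem^+(M)$ is itself a closed topological embedding, so the colimit-inherited topology on each stage coincides with its original one; this is what guarantees that the factoring produced below is genuinely \emph{continuous} into $\Riem^+(M)^c$, not merely set-theoretic.

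Suppose toward contradiction that $f(K)\not\subset\Riem^+(M)^{1/n}$ for any $n$. Pick $p_n\in f(K)\setminus\Riem^+(M)^{1/n}$ and set $A := \{p_n : n\geq 2\}$. Then $A$ is infinite, for if it lay in some $\Riem^+(M)^{1/N}$ it would contradict $p_N\notin\Riem^+(M)^{1/N}$. For every $m$, the intersection $A\cap\Riem^+(M)^{1/m}$ contains only $p_n$ with $n\leq m$, because for $n>m$ one has $\Riem^+(M)^{1/n}\supset\Riem^+(M)^{1/m}$, so $p_n\notin\Riem^+(M)^{1/n}$ forces $p_n\notin\Riem^+(M)^{1/m}$. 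Hence $A\cap\Riem^+(M)^{1/m}$ is finite and closed in the Hausdorff space $\Riem^+(M)^{1/m}$. The same applies to every subset of $A$, so $A$ inherits the discrete subspace topology from $\Riem^+(M)$. But $A\subset f(K)$ is compact, and a compact discrete space is finite---a contradiction. The main obstacle is thus purely point-set, namely verifying that the closed-$T_1$-embedding hypothesis really holds at each stage so that the factoring is continuous; nothing specific to positive scalar curvature enters the argument.
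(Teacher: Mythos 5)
Your proof is correct and takes essentially the same route as the paper: both reduce the weak equivalence to the fact that a continuous map from a compact Hausdorff space into a sequential colimit of closed $T_1$-embeddings factors continuously through a finite stage (so that it lands in, and corestricts continuously to, $\Riem^+(M)^c_g$ for some $c$), using that the closed-embedding hypothesis makes each $\Riem^+(M)^c\hookrightarrow\Riem^+(M)$ a topological embedding. The only difference is that the paper cites Strickland's Lemma 3.6 for the factoring fact (Lemma \ref{two-topologies-on-fibnr4es}), whereas you reprove it from scratch via the discrete--compact--finite contradiction; that is exactly the standard proof of that lemma, so the underlying argument is the same.
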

\begin{proof}
The inclusion maps $\Riem^+ (M)^b \to \Riem^+ (M)^c$ and $\Riem^+ (M)_g^b \to \Riem^+ (M)_g^c$ are closed embeddings. Hence the Lemma then follows from the next one, which is a general fact.
\end{proof}

\begin{lem}\label{two-topologies-on-fibnr4es}
Let $X_0 \to X_1 \to X_2 \to X_3 \to \ldots$ be a sequence of closed embeddings of Hausdorff spaces and let $f_n:X_n \to Y$ be a compatible sequence of maps. Then the continuous bijection $\psi:\colim_n (f_n^{-1}(y)) \to (\colim_n f_n)^{-1}(y)$ is a weak homotopy equivalence.
\end{lem}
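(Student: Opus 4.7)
The plan is to exploit the standard compactness property of sequential colimits along closed Hausdorff embeddings: if $X_0 \subset X_1 \subset \cdots$ is such a sequence with colimit $X$, then each inclusion $X_n \hookrightarrow X$ remains a closed embedding, and every compact subset $K \subset X$ is contained in some $X_n$. The latter is typically proved by contradiction: if $K$ met every $X_n \setminus X_{n-1}$ in a point $x_n$, the set $\{x_n\}$ would be closed and discrete in $X$, preventing $K$ from being compact.

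Accepting this, I first set $F_n := f_n^{-1}(y) \subset X_n$ and observe that $F_n \cap X_{n-1} = F_{n-1}$, so the sequence $F_0 \hookrightarrow F_1 \hookrightarrow \cdots$ is again one of closed Hausdorff embeddings. Write $C := \colim_n F_n$ (with the colimit topology) and $S := (\colim_n f_n)^{-1}(y) \subset X$ (with the subspace topology); as a set map, $\psi \colon C \to S$ is the identity, so the content is purely topological.

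I will then check that $\psi_*$ is a bijection on $\pi_k$ for every $k \geq 0$. For surjectivity, take any continuous $\alpha \colon S^k \to S$. Its image is compact in $X$, hence is contained in some $F_n$. Because $X_n \hookrightarrow X$ is a topological embedding, $\alpha$ is automatically continuous as a map to $F_n$ with its intrinsic topology, and post-composition with the canonical map $F_n \to C$ supplies a continuous lift $\tilde\alpha \colon S^k \to C$ satisfying $\psi \tilde\alpha = \alpha$. For injectivity I run the same argument on a homotopy $H \colon S^k \times [0,1] \to S$ between $\psi \alpha_0$ and $\psi \alpha_1$: its compact image lies in some $F_n$, which I enlarge if necessary to also contain the images of $\alpha_0$ and $\alpha_1$, and the resulting factorisation through $C$ is the desired homotopy. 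The same compactness argument settles $\pi_0$.

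The only real obstacle is the compactness fact about colimits of closed Hausdorff embeddings. This is classical point-set topology, but it is genuinely essential: without it there is no reason for a map out of a compact space to factor through a finite stage, and the whole argument collapses. Once it is in place, the rest is a short diagram chase.
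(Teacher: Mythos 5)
Your proposal is correct and follows essentially the same route as the paper's proof: both rest on the key compactness fact that a compact subset of a sequential colimit of closed Hausdorff embeddings factors through a finite stage (which the paper cites as Lemma~3.6 of Strickland's notes, and which you sketch directly). The paper packages the argument slightly differently—showing that for any compact Hausdorff $K$, a set map $h\colon K \to \colim_n f_n^{-1}(y)$ is continuous iff $\psi\circ h$ is—but this is exactly your lifting argument for spheres and homotopies in disguise.
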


\begin{proof}
It is enough to prove that if $K$ is compact Hausdorff and $h:K \to \colim_n (f_n^{-1}(y))$ is a map (of sets), then $h$ is continuous if and only if $\psi \circ h$ is continuous. If $g:= \psi \circ h$ is continuous, then we can consider $g$ as a map to $\colim_n X_n$. By \cite[Lemma 3.6]{Strick}, there is $n$ and $k:K \to X_n$ (continuous), so that $g:= i_n \circ k$ ($i_n:X_n \to \colim_n X_n$ is the natural map). Now $k$ maps into $f_n^{-1}(y)$, and so $h$ can be written as the composition $K \stackrel{k}{\to} f_n^{-1} (y) \to \colim_n (f_n^{-1}(y))$ of continuous maps.
\end{proof}

\subsection{The trace construction}

For the proof of both, Theorem \ref{thm:main} and Theorem \ref{thm:improved-chernysh-theorem}, we need a tedious but straightforward calculation using the standard formulas of Riemannian geometry. We include the proof because we do not know an explicit reference.

\begin{lem}\label{lem:scalar-curvature-of-trace}
Let $g: \bR \to \Riem (M)$ be a smooth path. Let $h:= dt^2+ g(t)$ be the induced metric on $\bR \times M$. Then the scalar curvature of $h$ is given by the formula
\[
\scal(h) = \scal(g(t)) +  \frac{3}{4}    g^{ik} g^{jl} g_{ij,0} g_{kl,0} - g^{kl} g_{kl,00} - \frac{1}{4} g^{ik} g^{jl}  g_{ik,0} g_{jl,0}.
\]
Here we use a local coordinate system in $M$ and the Einstein summation convention. Moreover, $g_{ij}$ are the components of the metric tensor of $g$, $g^{ij}$ the components of its inverse. A symbol as $g_{ij,k}$ denotes the derivative of $g_{ij}$ with respect to the $k$th coordinate and similarly for higher derivatives. The $0$th direction is the $\bR$-direction. 
\end{lem}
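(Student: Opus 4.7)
The plan is to compute $\scal(h)$ in local coordinates $(x^0, x^1, \ldots, x^d)$ with $x^0 = t$, reducing matters to: (a) the Christoffel symbols of $h$, (b) the single component $R^h_{00}$ of Ricci, and (c) one application of the (traced) Gauss equation for the slice hypersurfaces $\{t\} \times M \subset \bR \times M$.

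First I would compute the Christoffel symbols of $h$. With $h_{00} = 1$, $h_{0i} = 0$, $h_{ij} = g_{ij}(t, \cdot)$ and the vanishing derivatives $h_{00,\mu} = h_{0i,\mu} = 0$, the usual formula $\hat\Gamma^\rho_{\mu\nu} = \tfrac12 h^{\rho\sigma}(h_{\sigma\mu,\nu} + h_{\sigma\nu,\mu} - h_{\mu\nu,\sigma})$ yields $\hat\Gamma^k_{ij} = \Gamma^k_{ij}$ (the Christoffels of $g(t)$), $\hat\Gamma^0_{ij} = -K_{ij}$, $\hat\Gamma^i_{0j} = K^i_j$, and $\hat\Gamma^\rho_{00} = \hat\Gamma^0_{0i} = 0$, where $K_{ij} := \tfrac12 g_{ij,0}$ and $K^i_j := g^{ik}K_{kj}$. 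The tensor $K_{ij}$ is recognized as the second fundamental form of the slice $\{t\} \times M$ with unit normal $\partial_t$, with mean curvature $H := g^{ij}K_{ij} = \tfrac12 g^{ij}g_{ij,0}$; in particular $\hat\Gamma^\rho_{0\rho} = H$.

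Next I would compute $R^h_{00}$ via $R_{\mu\nu} = \partial_\rho \hat\Gamma^\rho_{\mu\nu} - \partial_\nu \hat\Gamma^\rho_{\mu\rho} + \hat\Gamma^\rho_{\rho\sigma}\hat\Gamma^\sigma_{\mu\nu} - \hat\Gamma^\rho_{\nu\sigma}\hat\Gamma^\sigma_{\mu\rho}$. Only two terms survive because $\hat\Gamma^\rho_{00} = 0$, yielding
\[
R^h_{00} = -\partial_0 H - K^i_j K^j_i = -\partial_0 H - |K|^2,
\]
where $|K|^2 := g^{ik}g^{jl} K_{ij} K_{kl} = \tfrac14 g^{ik}g^{jl} g_{ij,0}g_{kl,0}$ and the identity $K^i_j K^j_i = |K|^2$ holds because $K_{ij}$ is symmetric. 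Using $\partial_0 g^{ij} = -g^{ik}g^{jl} g_{kl,0}$ one then finds $\partial_0 H = -2|K|^2 + \tfrac12 g^{ij}g_{ij,00}$, and hence $R^h_{00} = |K|^2 - \tfrac12 g^{ij}g_{ij,00}$.

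Finally I would invoke the standard hypersurface identity (the doubly traced Gauss equation combined with the split $\scal(h) = 2\operatorname{Ric}^h(\partial_t,\partial_t) + \sum_{i,j} R^h(e_i,e_j,e_j,e_i)$),
\[
\scal(h) = \scal(g(t)) + 2 R^h_{00} + |K|^2 - H^2,
\]
and substitute the formula for $R^h_{00}$ together with $|K|^2 = \tfrac14 g^{ik}g^{jl}g_{ij,0}g_{kl,0}$ and $H^2 = \tfrac14 g^{ik}g^{jl}g_{ik,0}g_{jl,0}$. This produces $\scal(h) = \scal(g(t)) + 3|K|^2 - g^{kl}g_{kl,00} - H^2$, which is the claimed formula. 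The main obstacle is purely bookkeeping: tracking the sign in $\hat\Gamma^0_{ij}$ and verifying the Gauss identity in one's sign convention (easily sanity-checked against the warped product $h = dt^2 + f(t)^2 g_0$). A self-contained alternative which avoids invoking Gauss is to compute the remaining components $R^h_{0i}$ and $R^h_{ij}$ directly from the Christoffels above and contract by $h^{\mu\nu}$; this is longer but completely mechanical.
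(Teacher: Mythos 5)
Your proof is correct, and it takes a genuinely different route from the paper's. The paper proves the lemma by pure brute force: after computing the Christoffel symbols (which you also compute, and which agree), it writes down the curvature-tensor components $R^k_{0k0}$, $R^0_{j0l}$ and $R^k_{jkl}$ individually and contracts them to obtain $\scal(h)$. You instead compute only the single normal Ricci component $R^h_{00}$ directly, and obtain the rest by recognising $K_{ij}=\tfrac12 g_{ij,0}$ as (up to sign) the second fundamental form of the slices $\{t\}\times M$ and invoking the twice-traced Gauss equation $\scal(h)=\scal(g(t))+2R^h_{00}+|K|^2-H^2$. This is shorter and more conceptual: each of the four terms in the final formula acquires a geometric meaning ($\scal(g)$ intrinsic, $-g^{kl}g_{kl,00}$ and part of $|K|^2$ from $R^h_{00}$, $|K|^2$ and $-H^2$ from the extrinsic Gauss contribution), and your warped-product sanity check against \eqref{curvature-cormula} confirms the sign conventions. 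What the paper's direct computation buys is self-containedness -- it needs no appeal to the Gauss equation and hence no fussing over its sign conventions -- at the cost of a longer, less illuminating calculation. Your final remark (that one could alternatively compute $R^h_{ij}$ and $R^h_{0i}$ directly and contract) is essentially a description of the paper's proof.
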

\begin{proof}
Fix a local coordinate system $(x_1, \ldots, x_d)$ on $M$ and define a coordinate system on $\bR \times M$ by taking $x_0 =t$, the $\bR$-variable. We now let $g_{ij}$ be the components of $g$ in these coordinates and $h_{ij}$ those of $h$. Let $g^{ij}$ and $h^{ij}$ be the components of the inverses of the metric tensors. Note that
\[
 h_{ij}=
 \begin{cases}
g_{ij} & i,j \geq 1,\\
1 & i=j=0,\\
0 & \text{otherwise}
 \end{cases}
\]
and 
\[
 h^{ij}=
 \begin{cases}
g^{ij} & i,j \geq 1,\\
1 & i=j=0,\\
0 & \text{otherwise}.
 \end{cases}
\]
We write $\Gamma_{ij}^k$, $R^{i}_{jkl}$ and $S$ for the Christoffel symbols, the components of the curvature tensor and the scalar curvature of $h$ and use the symbols $\gamma_{ij}^k$, $r^{i}_{jkl}$ and $s$ for those associated with $g$. 
Without any further comment, we use the Einstein summation convention. With these notations in place, we have, essentially by definition, 
\[
\Gamma_{ij}^k = \frac{1}{2}  h^{kl} (h_{il,j}+ h_{jl,i} - h_{ij,l}) 
\]
\[
 R^k_{lij} = \Gamma_{jl,i}^k - \Gamma_{il,j}^k + \Gamma_{im}^k \Gamma_{jl}^m - \Gamma_{jm}^k \Gamma_{il}^m
\]
\[
 S = h^{jl} R_{jkl}^k,
\]
see \cite[Corollary 3.3.1, (3.1.31), (3.3.6) and (3.3.19)]{Jost}. The symmetry property
\[
 \Gamma_{ij}^k = \Gamma_{ji}^k,
\]
is obvious. The same formulas of course hold for $g$ and its associated objects. Using these formulas, one computes
\[
 \Gamma_{ij}^k = 
 \begin{cases}
\gamma_{ij}^k & i,j,k \neq 0,\\
- \frac{1}{2}  g_{ij,0} & k=0, \; i,j \neq 0,\\
\frac{1}{2} g^{kl} (g_{jl,0}) & k,j \neq 0, \; i =0,\\
\frac{1}{2} g^{kl} (g_{il,0}) & k,i \neq 0, \; j =0,\\
0 & \text{otherwise}.
 \end{cases}
\]
The relevant components of the curvature tensor are
\[
 R^k_{0k0} =+ \frac{1}{4} g^{ki} g^{jl} g_{ij,0} g_{kl,0} - \frac{1}{2} g^{kl} g_{kl,00},
\]
\[
 R^0_{j0l} = - \frac{1}{2} g_{jl,00}+ \frac{1}{4} g^{mi} g_{lm,0} g_{ij,0}
\]
and (for $j,k,l \neq 0$)
\[
 R^k_{jkl} = r_{jkl}^k + \frac{1}{4} g^{ki} g_{il,0} g_{kj,0} - \frac{1}{4} g^{ki} g_{ik,0} g_{lj,0}.
\]

Altogether, we obtain (making an exception of the rule that we use the summation convention)
\[
 S= \sum_{k=0}^d R^k_{0k0} + \sum_{j,l \neq 0} g^{jl} R_{j0l}^0 + \sum_{j,k,l \neq 0}g^{jl} R^k_{jkl} =
\]
(switching back to the summation convention)
\[
 \frac{1}{4} g^{ki} g^{jl} g_{ij,0} g_{kl,0} - \frac{1}{2} g^{kl} g_{kl,00} - \frac{1}{2}g^{jl}  g_{jl,00} + \frac{1}{4}g^{jl}  g^{mi} g_{lm,0} g_{ij,0} + 
s + \frac{1}{4} g^{jl} g^{ki} g_{il,0} g_{kj,0} - \frac{1}{4} g^{jl} g^{ki} g_{ik,0} g_{lj,0} =
\]
 \[
s   + \frac{3}{4}    g^{ki} g^{jl} g_{ij,0} g_{kl,0} - g^{kl} g_{kl,00} - \frac{1}{4} g^{jl} g^{ki} g_{ik,0} g_{lj,0}.
\]
\end{proof}

\begin{lem}\label{gajer-lemma}\cite{Gajer}
Let $M$ be a compact manifold, $P$ a compact space and let $G: P \times [0,1] \to \Riem^+ (M)$ be continuous. Assume that for each $p \in P$, there is $B_p \in \bR$ such that $\scal (G(p,t)) \geq B_p$ for all $t \in [0,1]$. Then for each $\eta >0$, there is $\Lambda > 0$, such that if $f: \bR \to [0,1]$ is a smooth function with $\norm{f'}_{C^0}, \norm{f''}_{C^0} \leq \Lambda$, then the metric $G(p,f(t)) + dt^2$ on $M \times \bR$ satisfies $\scal (G(p,f(t)) + dt^2)\geq B_p - \eta$.
\end{lem}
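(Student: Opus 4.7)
The plan is to apply Lemma \ref{lem:scalar-curvature-of-trace} with $g(t) := G(p, f(t))$ and then to show that the three ``correction'' terms beyond $\scal(G(p,f(t)))$ in the resulting formula can be made uniformly small by taking $\Lambda$ small. Denote these correction terms collectively by $E(p, t)$, so that
\[
\scal \bigl(G(p, f(t)) + dt^2\bigr) = \scal\bigl(G(p, f(t))\bigr) + E(p, t) \geq B_p + E(p, t),
\]
and it suffices to find $\Lambda > 0$ with $|E(p, t)| \leq \eta$ for all $p \in P$ and $t \in \bR$.

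To do this I would first apply the chain rule to compute
\[
g_{ij,0} = f'(t) \cdot \partial_s G(p,s)_{ij}\big|_{s = f(t)}, \quad g_{ij,00} = f''(t) \cdot \partial_s G(p,s)_{ij}\big|_{s = f(t)} + (f'(t))^2 \cdot \partial_s^2 G(p,s)_{ij}\big|_{s = f(t)}.
\]
Substituting into the correction terms from Lemma \ref{lem:scalar-curvature-of-trace}, each resulting summand is either quadratic in $f'(t)$ or linear in $f''(t)$, multiplied by a polynomial expression in the components of $G(p,f(t))$, the components of its inverse, and the first two $s$-derivatives of $G(p,s)$ at $s=f(t)$. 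Using compactness of $P \times [0,1] \times M$ together with a finite cover of $M$ by coordinate charts, I would bound all these coefficient expressions by a single constant $C$ independent of $p$, $t$, and the choice of chart. This yields an estimate of the form $|E(p,t)| \leq C'(\Lambda^2 + \Lambda)$, and choosing $\Lambda$ small enough that $C'(\Lambda^2 + \Lambda) < \eta$ finishes the proof.

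The main subtlety is the regularity of $G$ in the second variable: the computation above needs $G(p,s)$ to be $C^2$ in $s$ in order to apply the chain rule, and moreover needs joint continuity of $\partial_s G$ and $\partial_s^2 G$ on $P \times [0,1] \times M$ in order to extract the uniform bound $C$. These hypotheses are presumably implicit in the way the lemma is used (mirroring Gajer's original setup), where the homotopy $G$ is smooth in $s$ and jointly continuous in $p$. Once this is granted, the proof is a mechanical substitution followed by a routine compactness argument, with no real conceptual obstacle.
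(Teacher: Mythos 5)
Your proposal takes essentially the same approach as the paper's proof, which simply invokes Lemma \ref{lem:scalar-curvature-of-trace} to conclude that $\scal(G(p,f(t)) + dt^2) \geq B_p - C(\|f'\|_{C^0} + \|f''\|_{C^0})$ for some $C>0$, and leaves the chain-rule substitution and the compactness argument entirely implicit; your write-up just fills those steps in. The regularity caveat you raise at the end is a genuine one and worth flagging: as written, the lemma only assumes $G$ is continuous, but applying Lemma \ref{lem:scalar-curvature-of-trace} to the path $t\mapsto G(p,f(t))$ requires that path to be smooth, which a merely continuous $G$ does not guarantee (indeed $G(p,f(t))+dt^2$ need not even be a smooth Riemannian metric otherwise). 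In the paper's applications this never causes trouble --- $G$ is always smooth in its second argument, e.g.\ the affine interpolation $\tilde G(s,x)=(1-s)G_0(x)+sg$ feeding into Lemma \ref{lem:second-normalizationlemma}, or the explicitly smooth-in-$t$ family $C_n$ built in Lemma \ref{lem:chernysh-thm-proof-approximation} --- but the stated hypothesis is too weak and should really ask $G(p,\cdot)$ to be $C^2$ with derivatives jointly continuous in $(p,s)$, exactly as you note.
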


\begin{proof}
Lemma \ref{lem:scalar-curvature-of-trace} shows that there is $C>0$ so that
\[
 \scal (G(p,f(t)+dt^2) \geq B_g - C(\norm{f'}_{C^0} + \norm{f''}_{C^0}),
\]
which immediately implies the claim.
\end{proof}

\subsection{Rotationally invariant metrics}

Let $\psi\colon (0,\infty)\times S^{k-1} \to \bR^k \setminus\{0\}, (t,v)\mapsto tv$ be the polar coordinate map. We denote by $d\xi^2$ the round metric on $S^{k-1}$. Furthermore, $S^{k-1}_r \subset \bR^k$ denotes the sphere of radius $r$. 

\begin{lem}\label{lem:preparation}
Let $g$ be an $O(k)$-invariant Riemannian metric on $B_R^k$, i.e. for all $A \in O(k)$, we have $A^*g=g$. 
\begin{enumerate}
\item There exist smooth functions $a,f\colon(0,R)\to (0,\infty)$, such that $\psi^*g = a(t)^2dt^2 + f(t)^2 d\xi^2$.
\item $a(t)\equiv 1$ holds if and only if the rays $t\mapsto tv$ are unit speed geodesics for all $v\in S^{k-1}$. In this case we call $g$ a \emph{normalized rotationally symmetric metric}. 
\item Under the hypothesis of (2), $f$ is the restriction of an odd smooth function $f\colon \bR\to\bR$ with $f'(0)=1$. We call $f$ the \emph{warping function of $g$}. 
\item In that situation, the scalar curvature of $g$ is given by 
\begin{equation}\label{curvature-cormula}
\scal(g) = (k-1)\left((k-2)\frac{1-f'^2}{f^2} - 2\frac{f''}f\right).
\end{equation}
\end{enumerate}
\end{lem}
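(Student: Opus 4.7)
The plan is to treat the four parts in order: (1) and (2) are invariance arguments, (3) is the main obstacle because the polar coordinate map degenerates at the origin, and (4) reduces cleanly to Lemma \ref{lem:scalar-curvature-of-trace}.

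For (1), I would use that $\psi$ is $O(k)$-equivariant for the trivial action on $(0,\infty)$ and the standard action on $S^{k-1}$, so $\psi^* g$ is $O(k)$-invariant on $(0,R)\times S^{k-1}$. At a point $(t,v)$, the stabilizer $O(k-1)\subset O(k)$ of $v$ acts trivially on $T_t(0,R)$ and via the standard representation on $T_v S^{k-1}$. Since this representation admits no nonzero invariant linear functional and a unique invariant inner product up to scale, $\psi^* g$ has no mixed $dt\,d\xi$-terms and restricts on $T_v S^{k-1}$ to a positive scalar multiple of the round metric; transitivity of $O(k)$ on $S^{k-1}$ forces both scalars to be independent of $v$, giving the claimed form with $a,f>0$. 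For (2), the curve $t\mapsto (t,v)$ is mapped by $\psi$ to the ray $\gamma_v(t)=tv$, and $|\dot\gamma_v(t)|_g = a(t)$ by (1), so unit speed is equivalent to $a\equiv 1$. In that case the rays are automatically geodesics: the geodesic starting at $tv$ in direction $v$ is, by uniqueness, invariant under the stabilizer $O(k-1)$ of $v$, hence remains on the line $\bR v$.

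Part (3) is the delicate step. I would first identify $g$ at the origin: by $O(k)$-invariance, $g_0$ is a positive multiple of the Euclidean inner product, and the normalization $a\equiv 1$ pins the multiple at $1$. For $t>0$, $v\in S^{k-1}$, and a unit vector $w\in v^\perp$, differentiating $s\mapsto \psi(t,\gamma(s))$ at $s=0$ for a great circle $\gamma$ with $\gamma(0)=v$, $\gamma'(0)=w$ shows that $tw\in T_{tv}\bR^k$ has $g$-length $f(t)$, so $g_{tv}(w,w) = f(t)^2/t^2$ for any such $w$. Writing $\mathrm{tr}_e(g)(x):=\sum_i g_x(e_i,e_i)$ for the standard Euclidean basis, the decomposition $e_i = (e_i\cdot\hat x)\hat x + e_i^\perp$ together with (1) gives $\mathrm{tr}_e(g)(x) = 1 + (k-1) f(|x|)^2/|x|^2$ for $x\neq 0$. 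Since $\mathrm{tr}_e(g)$ is smooth and $O(k)$-invariant on $B_R^k$, it is a smooth function of $|x|^2$ alone, and so $f(t)^2 = t^2 H(t^2)$ for a smooth function $H$ on a neighborhood of $0$ with $H(0)=1$. Taking the positive square root yields $f(t) = t\sqrt{H(t^2)}$, visibly extending to an odd smooth function on $\bR$ with $f'(0) = \sqrt{H(0)} = 1$.

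For (4), I would apply Lemma \ref{lem:scalar-curvature-of-trace} to $M = S^{k-1}$ with the time-dependent metric $g(t) = f(t)^2 \hat g$, where $\hat g$ is the round metric. The substitutions $g^{ij} = f^{-2}\hat g^{ij}$, $g_{ij,0} = 2ff'\hat g_{ij}$, $g_{ij,00} = 2(f'^2 + ff'')\hat g_{ij}$, combined with the contractions $\hat g^{ij}\hat g_{ij} = k-1$ and the conformal rescaling identity $\scal(f^2 \hat g) = (k-1)(k-2)/f^2$, feed directly into the formula of Lemma \ref{lem:scalar-curvature-of-trace}; routine cancellation then reassembles the terms into the asserted \eqref{curvature-cormula}.
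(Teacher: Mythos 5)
Your proof is correct, and its treatment of parts (1) and (2) is essentially the paper's argument, just phrased in stabilizer/representation-theoretic language rather than via the particular involution $A$ with $Av=v$, $A|_{v^\perp}=-\id$. Where you genuinely diverge is in parts (3) and (4): the paper simply cites Petersen for both, while you supply self-contained derivations. Your argument for (3) is nice — passing to the scalar $\mathrm{tr}_e(g)$, which is a smooth $O(k)$-invariant function hence (by the Whitney/Glaeser theorem on even smooth functions, which you should state explicitly as it is the crux of the step) a smooth function of $|x|^2$, and then reading off $f(t)^2=t^2H(t^2)$. This converts the somewhat awkward regularity question about warping functions at the origin into a standard fact about radial smooth functions. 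Your argument for (4) is in a sense even better suited to this paper than Petersen's derivation: it reuses the paper's own Lemma \ref{lem:scalar-curvature-of-trace} with $M=S^{k-1}$ and the conformal path $g(t)=f(t)^2\,d\xi^2$, so the warped-product curvature formula is obtained as a special case of a computation the paper already needs for other purposes, and the contractions $\hat g^{ij}\hat g_{ij}=k-1$, $\scal(f^2 d\xi^2)=(k-1)(k-2)/f^2$ indeed reassemble into \eqref{curvature-cormula} after routine cancellation. The trade-off is that your version is longer than the two Petersen citations, but it makes the lemma entirely self-contained within the paper's toolkit, which is in the spirit of the paper's stated goal.
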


\begin{proof}
For part (1), one uses that for each $v\in S^{k-1}$, there is an $A\in O(k)$ such that $Av=v$ and $A|_{v^\perp}=-\id$. It follows that at each point $0\ne x\in B_R^k$, the spaces $\mathrm{span}\{x\}$ and $T_x(S^{k-1}_{\norm{x}}) $ are orthogonal with respect to $g$. Since $d\xi^2$ is, up to a constant multiple, the only $O(k)$-invariant metric on $S^{k-1}$, the claim follows.
Part (2) is clear. 
Part (3) can be found in \cite[\S 3.4]{petersen}, and the computation for part (4) in \cite[p. 69]{petersen}. 
\end{proof}

We denote the scalar curvature of the metric $dt^2 + f(t)^2 d\xi^2$ by
\begin{equation}\label{shortnotation-sclar-of-arping}
\sigma (f):=\scal (dt^2 + f(t)^2 d\xi^2)= (k-1)\left((k-2)\frac{1-f'^2}{f^2} - 2\frac{f''}f\right).
\end{equation}

The function $f(t)=\sin(t)$ on $[0,\pi)$ gives a metric which is isometric to the usual round metric on $S^k$. It has $\sigma(f)= k (k-1)$. Let us now give the precise definition of the torpedo metrics. 

\begin{defn}\label{defn:torpedometrix}
A \emph{torpedo function of radius $\delta>0$} is a function $f: [0,\infty) \to \bR$ which is the restriction of a smooth odd function with $f'(0)=1$, such that
\begin{enumerate}
\item $0 \leq f' \leq 1$,
\item $f'' \leq 0$, 
\item there is $R>0$ so that $f \equiv \delta$ near $[R,\infty)$,
\item $\sigma(f) \geq \frac{1}{\delta^2}(k-1)(k-2)$.
\end{enumerate}
The metric $dt^2 + f(t)^2 d \xi^2$ on $\bR^k$ is called a \emph{torpedo metric of radius $\delta$}.
\end{defn}

Let us give a concrete construction of a torpedo function. Let $\epsilon>0$ be small and let $u\colon[0,\infty)\to \bR$ be a function satisfying
\begin{itemize}
\item $u\equiv \id$ on $[0,\frac{\pi}{2}-\epsilon]$,
\item $u\equiv \frac{\pi}{2}$ on $[\frac{\pi}{2} + \epsilon,\infty]$,
\item $u''\leq 0$ (together with the previous conditions, this implies $0 \leq u'\leq 1$).
\end{itemize}
We define $h_1(t):=\sin(u(t))$. 
By \eqref{curvature-cormula} we have
\[
\sigma(h_1)  = u'(t)^2 k(k-1)  + (1-u'(t)^2)\frac{(k-1)(k-2)}{\sin(u(t))^2} - 2(k-1)\frac{\cos(u(t))}{\sin(u(t))}u'' \geq 
\]
\[
\geq u'(t)^2 k(k-1)  + (1-u'(t)^2)(k-1)(k-2)  \geq (k-1)(k-2),
\]
so that $h_1$ is indeed a torpedo function of radius $1$ (with $R\geq  \frac{\pi}{2} + \epsilon$). 
For $\delta>0$, the function
\begin{equation}\label{defn:warping-for-deltatorpedo}
h_\delta(t):=\delta h_1(\frac t\delta)
\end{equation}
is a torpedo function of radius $\delta$.
\emph{For the rest of this paper, we fix a torpedo function $h_1$ of radius $1$, and define $h_\delta (t):= \delta h_1(\frac t\delta)$.}

\section{The parametrized Gromov--Lawson construction}\label{sec:glparam}

In this and the following section, we prove Theorem \ref{thm:main}, and we begin with the precise statement.
Let $N$ and $M$ be compact manifolds with collared boundary and let $\varphi: N \times \bR^k \to M$ be an open embedding with $k \geq 3$. We assume that $\varphi^{-1} (\partial M)= (\partial N) \times \bR^k$ and let $\partial \varphi: \partial N \times \bR^k \to \partial M$ be the induced embedding. Furthermore, we assume that $\phi$ is compatible with the chosen collars, that is, if $(x,t,v) \in (\partial N)\times [0, 1) \times \bR^k \subset N \times \bR^k$, then 
\[
\varphi(x,t,v) = (\partial \varphi (x,v),t) \in \partial M \times [0,1) \subset M.
\]
From now on, we usually identify $N \times \bR^k$ with an open subset of $M$ via $\varphi$.

Let $g_N$ be a Riemannian metric on $N$ which is of the form $g_{\partial N}+dt^2$ on $\partial N\times [0,1)$. \emph{It is not required that $\scal (g_N)>0$}. Let 
\[
A:= \inf (\scal(g_N))\in \bR
\]
and pick $\delta >0$ so that
\[
\frac{1}{\delta^2}(k-1)(k-2)+A >0.
\]
Let $g_\torp^k$ be a torpedo metric on $\bR^k$ of radius $\delta$, and let $R>0$ be as in Definition \ref{defn:torpedometrix}. For $c>0$, define
\[
 \Riem^+ (M,\varphi)^c := \{ g \in \Riem^+ (M)^c \vert  g |_{N \times B_R^k}= (g_N + g_\torp^k)|_{N \times B_R^k} \} 
\]
and $\Riem^+ (M,\varphi):= \colim_{c \to 0} \Riem^+ (M,\varphi)$.
\begin{thm}\label{thm:main-part1-precise}
The inclusion maps 
\[
 \Riem^+ (M,\varphi)^c \to  \Riem^+ (M)^c \; \text{and} \;  \Riem^+ (M,\varphi) \to  \Riem^+ (M)
\]
are weak homotopy equivalences.
\end{thm}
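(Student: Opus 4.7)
The plan is to prove the theorem via the parametrized Gromov--Lawson construction: for any continuous family $g\colon K \to \Riem^+(M)^c$ over a compact polyhedron $K$, with a closed subspace $K_0 \subset K$ already mapping into $\Riem^+(M,\varphi)^c$, I would produce a continuous deformation $G\colon K \times [0,1] \to \Riem^+(M)^c$ with $G_0 = g$, $G_1(K) \subset \Riem^+(M,\varphi)^c$, and $G_s|_{K_0} = g|_{K_0}$ for all $s$. Applied to the pair $(D^n, S^{n-1})$, this immediately yields bijectivity of $\pi_n(\Riem^+(M,\varphi)^c) \to \pi_n(\Riem^+(M)^c)$ for all $n$.

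The deformation itself proceeds near the embedded submanifold $V := \varphi(N \times \{0\})$ in two stages. First, using the $g$-normal exponential map along $V$ combined with a scaling in the normal direction, one deforms $g$ through psc metrics to an $O(k)$-invariant metric on a small tube $N \times B_r^k$ that is approximately $g_N + g_{\mathrm{flat}}$. Second, one interpolates through warping functions $f_s$ connecting (essentially) $f(t) = t$ to the torpedo warping $h_\delta$ from \eqref{defn:warping-for-deltatorpedo}: the curvature formula \eqref{curvature-cormula} shows that along this interpolation the term $(k-1)(k-2)/\delta^2$ from the rotational factor dominates, so the product metrics $g_N + (dt^2 + f_s^2 \, d\xi^2)$ have positive scalar curvature by our choice of $\delta$, which was made precisely so that $(k-1)(k-2)/\delta^2 + A > 0$ with $A = \inf \scal(g_N)$.

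The main obstacle is making the construction continuous in $g$ with \emph{uniform} preservation of positive scalar curvature across the compact parameter space $K$. The bending into standard form is a radial deformation, and by Lemma \ref{lem:scalar-curvature-of-trace} it introduces curvature error terms depending on derivatives in the deformation variable. Lemma \ref{gajer-lemma} shows these errors can be made arbitrarily small by slowing the deformation sufficiently, but the required slowness depends on lower bounds of $\scal(g)$ and on bounds on the $g$-geometry near $V$. Compactness of $K$ yields uniform such bounds and hence uniform choices of the deformation parameters (radius $r$, speed of interpolation, auxiliary cut-off functions). The delicate point, which occupies the technical core of both \cite{Chernysh} and \cite{WalshC}, is organizing this into an explicit continuous deformation that is constant on metrics already lying in $\Riem^+(M,\varphi)^c$.

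For the colimit statement, the inclusions $\Riem^+(M)^b \hookrightarrow \Riem^+(M)^c$ for $b>c$ and the corresponding ones with the $\varphi$-condition imposed are homotopy equivalences, so any continuous map from a compact polyhedron into $\Riem^+(M)$ factors through some $\Riem^+(M)^c$ by a standard compactness argument, and the weak equivalence of the colimits follows from the case of each fixed $c$.
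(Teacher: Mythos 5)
Your overall two-stage decomposition (first make the metric rotationally symmetric on a small tube, then deform the warping function to $h_\delta$) agrees in spirit with the paper's split into Proposition \ref{prop:from-all-to-rotationallysymmetric} and Proposition \ref{prop:main2}, and the colimit reduction at the end is fine. However, your description of both stages contains genuine gaps.

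For the first stage, you claim that ``the $g$-normal exponential map along $V$ combined with a scaling in the normal direction'' deforms $g$ through psc metrics to a product $g_N + g_{\mathrm{flat}}$. This is not what the Gromov--Lawson construction does, and a naive scaling in the normal direction cannot work: if you simply dilate or shrink the normal coordinates while leaving the metric fixed outside the tube, the transition region acquires negative scalar curvature in general, and there is no reason why the interpolating metrics should remain psc. The actual mechanism in Proposition \ref{prop:glcurve} is the bending of $M$ inside $M \times \bR$ along a carefully designed plane curve $\Gamma_\lambda$: the curvature formula of Lemma \ref{lem:formula} shows that the large positive term $(k-1)(k-2)\sin^2(\theta)/r^2$ coming from the small sphere factor can be traded against the potentially negative contributions of the bending curvature $\kappa$, but only when the curve's curvature function, angle, and height are jointly controlled (e.g.\ via the ODE of Lemma \ref{lem:diffeq}). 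This delicate balance is the whole point of the Gromov--Lawson argument and is not captured by ``scaling.''

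For the second stage, you say one interpolates warping functions from essentially $f(t) = t$ to $h_\delta$ and that the term $(k-1)(k-2)/\delta^2$ dominates. Two problems: first, the warping function produced by the first stage is not $f(t)=t$ but some arbitrary normalized rotationally symmetric psc warping function; second, a linear interpolation between warping functions preserves positivity of $\sigma(f)$ only when both endpoints satisfy the monotonicity constraints $0 \leq f' \leq 1$ and $f'' \leq 0$ (Observation \ref{secondobsection}), which the output of the first stage need not satisfy near $R$. The paper therefore inserts the flattening deformation of Proposition \ref{prop:flatten} --- explicitly flagged as ``the most delicate step'' --- using the sloping and bending functions of Lemmas \ref{lem:slopingcurve} and \ref{lem:bendingcurve} to bring the warping function into the required shape before any convex interpolation can be applied. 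Without this intermediate step, the interpolation you propose would not stay in positive scalar curvature.
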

The proof that we give will apply simultaneously to both cases, and for notational simplicity, we deal only with $\Riem^+ (M,\varphi) \to  \Riem^+ (M)$. 
The proof is in two steps. We introduce an intermediate space $ \Riem^+ (M,\varphi) \subset \Riem^+_{rot} (M) \subset  \Riem^+ (M) $, which is defined to be 
\[
 \Riem^+_{rot} (M):= \{ g\in\Riem^+(M)\vert  g|_{N\times B_{R}^k} = g_N+g_{B_R^k}, g_{B_R^k} \text{rotationally symmetric, normalized}, \scal (g_{B_R^k}) >0\}.
\]
In this section, we show:

\begin{prop}\label{prop:from-all-to-rotationallysymmetric}
The inclusion map 
\[
 \Riem_{rot}^+ (M) \to \Riem^+ (M)
\]
is a weak homotopy equivalence.
\end{prop}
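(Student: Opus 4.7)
The plan is to build a continuous homotopy $H\colon\Riem^+(M)\times[0,1]\to\Riem^+(M)$ with $H_0=\id$, $H_1(\Riem^+(M))\subset\Riem_{rot}^+(M)$, and $H_t(\Riem_{rot}^+(M))\subset\Riem_{rot}^+(M)$ for all $t$. Such an $H$ immediately implies the inclusion is a weak homotopy equivalence. The construction proceeds in two stages carried out inside the tubular chart $\varphi\colon N\times\bR^k\hookrightarrow M$.

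Stage one is the parametrized Gromov--Lawson bending. For $g\in\Riem^+(M)$, choose a smooth concave curve $\gamma=(\gamma_1,\gamma_2)$ in the $(r,t)$-plane that is vertical at $r=R$ (so the new metric joins $g$ smoothly outside the tube) and horizontal at some small $r_0>0$ (so the tube terminates in a product cap), and replace $g$ on the tube by the induced metric on the graph of $\gamma$ inside $(M\times\bR,\, g+dt^2)$. A subsequent straightening converts this into a warped product $\tilde g_N + g_{rot}$ on $N\times B_R^k$, with $\tilde g_N$ a metric on $N$ close to $g|_{N\times\{0\}}$ and $g_{rot}$ a normalized rotationally symmetric metric on $B_R^k$. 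The scalar-curvature contribution from the $S^{k-1}_r$-direction is dominated by the positive term $(k-1)(k-2)/r^2$; this uses $k\geq 3$ and guarantees that for $\gamma$ sufficiently concave the bent metric has positive scalar curvature. Lemma \ref{lem:scalar-curvature-of-trace} controls the extra terms produced by the straightening step.

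Stage two deforms $\tilde g_N$ into the prescribed $g_N$. Since the torpedo cap has scalar curvature at least $(k-1)(k-2)/\delta^2$, which by the choice of $\delta$ exceeds $-\inf(\scal(g_N))$ uniformly over any compact family of metrics, the linear interpolation $s\mapsto s\tilde g_N+(1-s)g_N + g_{rot}$ remains a psc metric on $N\times B_R^k$; splicing with the unchanged metric outside the tube completes the deformation to a metric $g_N+g_{rot}\in\Riem_{rot}^+(M)$. When $g$ already lies in $\Riem_{rot}^+(M)$, one has $\tilde g_N=g_N$, and choosing the bending curve to be the graph of the existing warping function of $g$ makes the entire two-stage deformation fix $g$; hence $H_t$ preserves $\Riem_{rot}^+(M)$ as required.

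The main obstacle is the simultaneous continuity and relativity of the construction. The Gromov--Lawson curve $\gamma$ depends on fine geometric invariants of $g$ (principal curvatures of $\varphi(N\times S^{k-1}_r)$ in $(M,g)$ and ambient sectional curvatures), so a single choice that works for an entire compact family in $\Riem^+(M)$, and that simultaneously restricts to the tautological warping function on the rotational subspace, requires careful uniform estimates in the spirit of Lemma \ref{gajer-lemma}. Engineering this relativity, rather than only the unparametrized existence statement of Gromov--Lawson--Gajer, is the point at which the proof becomes technically delicate.
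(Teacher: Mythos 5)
Your proposal aims higher than what the paper proves: you want a global homotopy $H\colon\Riem^+(M)\times[0,1]\to\Riem^+(M)$ that strong-deformation-retracts $\Riem^+(M)$ onto $\Riem_{rot}^+(M)$ and even fixes $\Riem_{rot}^+(M)$ pointwise. The paper does not attempt this; it uses the criterion of Proposition \ref{prop:heq}, which only requires deforming compact families $G_0\colon (D^n,S^{n-1})\to(\Riem^+(M),\Riem_{rot}^+(M))$, and this weakening is essential. The Gromov--Lawson bending requires choosing an outer radius $r_1$ and a constant $C$ (Lemma \ref{lem:formula}), an inner width $\epsilon_0$ (Lemma \ref{lem:find-small-espilon}) and a length $\ell$, all of which are uniform only over a \emph{compact} family of metrics. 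Over all of $\Riem^+(M)$ these constants degenerate, so the single $H$ you posit cannot be produced by this method. You acknowledge the issue but do not resolve it, and it is not a technicality: it is exactly why the paper works fiberwise over discs.

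Two further points in your plan would fail as stated. First, the claim that choosing ``the bending curve to be the graph of the existing warping function'' makes the deformation fix a rotationally symmetric $g$ is incorrect: the pullback $E_\Gamma(\lambda,\_)^*(g+dy^2)$ returns $g$ only when $\Gamma_\lambda$ is the vertical line $s\mapsto(0,s)$, not the graph of $g$'s warping function; any nontrivial bending changes the metric. The paper only arranges that the homotopy maps $\Riem_{rot}^+(M)$ into itself (the bending is $O(k)$-equivariant and hence preserves rotational symmetry), not that it fixes it pointwise; this weaker statement suffices for Proposition \ref{prop:heq}. Second, in stage two the linear interpolation $s\mapsto s\tilde g_N+(1-s)g_N+g_{rot}$ need not have positive scalar curvature: $\scal\bigl((1-s)\tilde g_N+sg_N\bigr)$ is not controlled for intermediate $s$, and $g_{rot}$ produced by bending is not a torpedo of prescribed radius $\delta$, so the $(k-1)(k-2)/\delta^2$ bound you invoke is not available (turning $g_{rot}$ into a torpedo is the content of the next section). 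The paper instead does a crude linear interpolation \emph{without} controlling curvature (Lemma \ref{lem:second-normalizationlemma}), arranges by the bending for a long, thin product cylinder $N\times S^{k-1}_{r_\infty}\times[0,L]$, and then slides the interpolation parameter slowly along that cylinder using Lemma \ref{gajer-lemma}; positivity comes from the dominating $1/r_\infty^2$ term in the fiber direction and the smallness of $|f'|,|f''|$ in the trace formula of Lemma \ref{lem:scalar-curvature-of-trace}, not from any pointwise estimate on the interpolated $N$-metric.
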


The proof of Proposition \ref{prop:from-all-to-rotationallysymmetric} is essentially the same as the original argument by Gromov and Lawson \cite{GroLaw} (but note that Rosenberg--Stolz \cite{RosSto} corrected mistakes in \cite{GroLaw}). 

\subsection{Adapting tubular neighborhoods}
In the proof of Theorem \ref{thm:main}, we shall use several devices to change a Riemannian metric. One such device (which plays a minor, more technical role) is by suitable isotopies. 

\begin{defn}
A Riemannian metric $g$ on $M$ is \emph{normalized on the $r_0$-tube around $N$} if for each $p \in N$ and $v \in S^{k-1}$, the curve $[0,r_0] \to N \times \bR^k \subset M$, $t \mapsto (p,tv)$, is a unit speed geodesic.
\end{defn}

For example, each $g \in \Riem^+_{rot}(M)$ is, by definition, normalized on the $R$-tube around $N$. 

\begin{prop}[Adapting tubular neighborhoods]\label{prop:adapting-tubularneighborhoods}
Let $(K,L)$ be a finite CW-pair and let $G: K \to \Riem (M)$ be continuous, so that $G(x)$ is normalized on the $r$-tube around $N$ when $x \in L$. Then there exists $r_0 \in (0,r]$ and a continuous map $F: [0,1] \times K \to \Diff(M)$ such that $ F(t,x)=\id$ if $(t,x) \in  (\{0\} \times K)\cup ([0,1] \times L)$, $F(t,x)|_N = \id$ and such that $F(1,x)^* G(x)$ is normalized on the $r_0$-tube around $N$, for all $x \in K$.
\end{prop}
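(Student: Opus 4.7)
The plan is to use the exponential map of each metric $G(x)$ to build a second tubular embedding $\Psi(x)\colon N\times B_{r_0}^k \to M$ whose radial curves are unit-speed $G(x)$-geodesics and which coincides with $\varphi$ whenever $x \in L$, and then to invoke a parametric form of the uniqueness of tubular neighbourhoods to deform $\varphi$ into $\Psi(x)$ by an ambient isotopy of $M$ that is supported near $N$ and is trivial over $L$.

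First I would construct, for each $x\in K$ and $p\in N$, a fibrewise linear $G(x)$-isometry $\alpha(x)_p\colon\bR^k \to T_pM$ with image the transversal complement $V_p := D\varphi_{(p,0)}(0\oplus\bR^k)$ of $T_pN$, continuous in $x$, and coinciding with $A_p := D\varphi_{(p,0)}(0,\cdot)$ whenever $x \in L$. The tool is polar decomposition: write $A_p = \alpha(x)_p \circ S(x)_p$ with $S(x)_p := \sqrt{A_p^\ast A_p}$ positive self-adjoint on $(\bR^k, \langle\cdot,\cdot\rangle)$ (the adjoint taken with respect to $G(x)$ on the target), so that $\alpha(x)_p := A_p\circ S(x)_p^{-1}$ is automatically a $G(x)$-isometry onto $V_p$. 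For $x\in L$, the hypothesis that $G(x)$ is normalized on the $r$-tube makes $A_p$ itself $G(x)$-isometric (unit-speed initial velocities), so $S(x)_p = \id$ and $\alpha(x)_p = A_p$.

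I then set $\Psi(x)(p,v) := \exp^{G(x)}_p\!\bigl(\alpha(x)_p(v)\bigr)$. By compactness of $K$ and $N$ there is a uniform $r_0>0$ for which $\Psi(x)$ is an embedding of $N\times B_{r_0}^k$ into $M$ for every $x\in K$, and by construction each curve $t\mapsto \Psi(x)(p,tv)$ with $|v|=1$ is a unit-speed $G(x)$-geodesic. For $x\in L$ the identity $\alpha(x)_p = A_p$ combined with the unit-speed assumption on the curves $t\mapsto \varphi(p,tv)$ gives $\Psi(x) = \varphi|_{N\times B_{r_0}^k}$ as soon as $r_0 \le r$.

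It remains to realise the equality $F(1,x)\circ\varphi = \Psi(x)$ on a slightly smaller tube $N\times B_{r_1}^k$ as the endpoint of an ambient isotopy. Consider $\Theta(x) := \varphi^{-1}\circ \Psi(x)\colon N\times B_{r_0}^k \to N\times\bR^k$: this is a continuous family of embeddings fixing $N\times\{0\}$ pointwise and equal to the identity for $x\in L$. For a suitable $r_1\in(0,r_0]$, the parametric uniqueness-of-tubular-neighbourhoods construction --- write $\Theta(x)$ on $N\times B_{r_1}^k$ as the time-$1$ map of a smooth $x$-dependent vector field vanishing on $N\times\{0\}$, multiply by a bump function supported in $N\times B_{r_0}^k$, and integrate the resulting family --- produces the desired isotopy $F$ on $M$: continuous on $[0,1]\times K$, equal to $\id$ at $t=0$ and on $[0,1]\times L$, with $F(t,x)|_N=\id$, and with $F(1,x)\circ\varphi|_{N\times B_{r_1}^k} = \Psi(x)|_{N\times B_{r_1}^k}$. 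The main obstacle is exactly this last step: executing the cut-off-and-integrate construction continuously in $x$ while keeping it identically the identity over $L$. The polar-decomposition recipe makes $\Psi(x)$ and hence $\Theta(x)$ continuous in $x$ and strictly the identity over $L$, which reduces the remaining work to a routine but careful parametric adaptation of the classical isotopy-extension argument.
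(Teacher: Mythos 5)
Your proposal is correct and takes essentially the same route as the paper: build from $G(x)$ an exponential tubular chart $\Psi(x)$ that is automatically normalized and agrees with $\varphi$ over $L$, then apply parametric uniqueness of tubular neighbourhoods plus parametrized isotopy extension to produce $F$. The only cosmetic difference is that you make the fibrewise linear $G(x)$-isometry explicit via polar decomposition, landing in the fixed transversal $D\varphi_{(p,0)}(0\oplus\bR^k)$, whereas the paper routes through the identification $\nu_N^M \cong TN^\perp$ and then cites Hirsch for (relative, parametric) uniqueness of tubular neighbourhoods and Wall for parametrized isotopy extension instead of sketching the cut-off-and-integrate argument.
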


\begin{proof}
The embedding $\phi:N \times \bR^k \to M$ identifies the normal bundle $\nu_N^M$ with the trivial vector bundle $N \times \bR^k$. For each Riemannian metric $g$ on $M$, there are maps 
\[
 \xymatrix{
\phi_g: N \times \bR^k \ar[r]^-{\cong} & \nu_N^M \ar[r]^{\gamma_g} & TN^\bot \ar@{..>}[r]^{\exp_g} & M.
 }
\]
The first is the fixed isomorphism, the second is induced by the bundle metric $g$ and the third is the Riemannian exponential map of $g$ (and is only partially defined). The metric $g$ is normalized on the $r_0$-tube around $N$ if and only if $\phi_g$ is defined on $N \times B_{r_0}^k$ and agrees with $\phi$ there. Since $N$ and $K$ are compact, there is $r_0>0$ so that $\phi_{G(x)} $ is defined on $N \times B^k_{r_0}$, injective and has image in $N \times \bR^k \subset M$. There is an isotopy 
\[
 H: [0,1] \times K \times (N \times B_{r_0}^k) \to M
\]
of embeddings such that $H(t,x,\_) = \phi_{G(x)}$ for all $(t,x) \in (\{0\}\times K )\cup ([0,1] \times L)$ and such that $H(1,x,\_)=\phi$ for all $x \in K$. In the case $K=*$, this follows from the well-known result that tubular neighborhoods are unique up to isotopy \cite[Theorem 4.5.3, 4.6.5]{Hirsch}. The proof given in loc.cit. carries over to the parametrized and relative case without change. 

An instance of the parametrized isotopy extension theorem \cite[Theorem 6.1.1]{WallDiffTop} shows that there exists $F:[0,1] \times K \to \Diff(M)$ with $F(t,x)=\id$ when $(t,x) \in (\{0\} \times K) \cup ([0,1] \times L)$ and $F(t,x)|_{N \times B^k_{r_0}} = H(t,x,\_)$. The Riemannian metric $F(1,t)^* G(x)$ is normalized on the $r_0$-tube around $N$. 
\end{proof}

\subsection{Gromov--Lawson curves}\label{sec:glcurves}

One important step in the proof of Theorem \ref{thm:main} (well-explained in e.g. \cite{RosSto}, \cite{Walsh11}) is to obtain a deformation of a psc metric $g$ on $M$ by a deformation of $M$ inside $M \times \bR$ and to take the metric induced by $g + dt^2$.

\begin{defn}\label{def:glcurve}
A \emph{Gromov--Lawson curve} $\Gamma$ is a smooth map $\Gamma\colon [0,1]\times [0,\infty)\to\bR^2$ such that
\begin{enumerate}
\item $\Gamma(0,s)= (0,s)$ for all $s$,
\item each curve $\Gamma_\lambda:= \Gamma (\lambda,\_)$ is an embedding,
\item there exist $\rho >r_0>0$ such that for all $\lambda \in [0,1]$, $\Gamma_\lambda (s)$ lies on the positive $r$-axis for all $s \geq r_0$ and $\Gamma_{\lambda} (s)= (0,s)$ for all $s \geq \rho$. We call $r_0$ the \emph{outer width of $\Gamma$}. 
\item $\Gamma_1$ has a horizontal line segment of height $r_\infty$, i.e. there exist $0<y_4<y_5 \in\bR$ such that the line segment between the points $(y_4,r_\infty)$ and $(y_5,r_\infty)$ lies in the image of $\Gamma_1$. We call $r_\infty$ the \emph{inner width} of $\Gamma$, and $\ell:=|y_4-y_5|$ is the \emph{length of $\Gamma$}.
\item $\Gamma_\lambda(0)$ lies on the $y$-axis, and this is the only point where $\Gamma_\lambda$ meets the $y$-axis. Moreover, it does so at a right angle and follows the arc of a circle (of possibly infinite radius) in the region where $r \leq \frac{1}{2} r_\infty$.
\end{enumerate}
\end{defn}

A typical Gomov-Lawson curve is shown in figure \ref{figure:GL-curve}. The indicated points $(y_i,r_i)$ are important for the construction of these curves.
\begin{figure}[h]\label{figure:GL-curve}
	\includegraphics[width=38em]{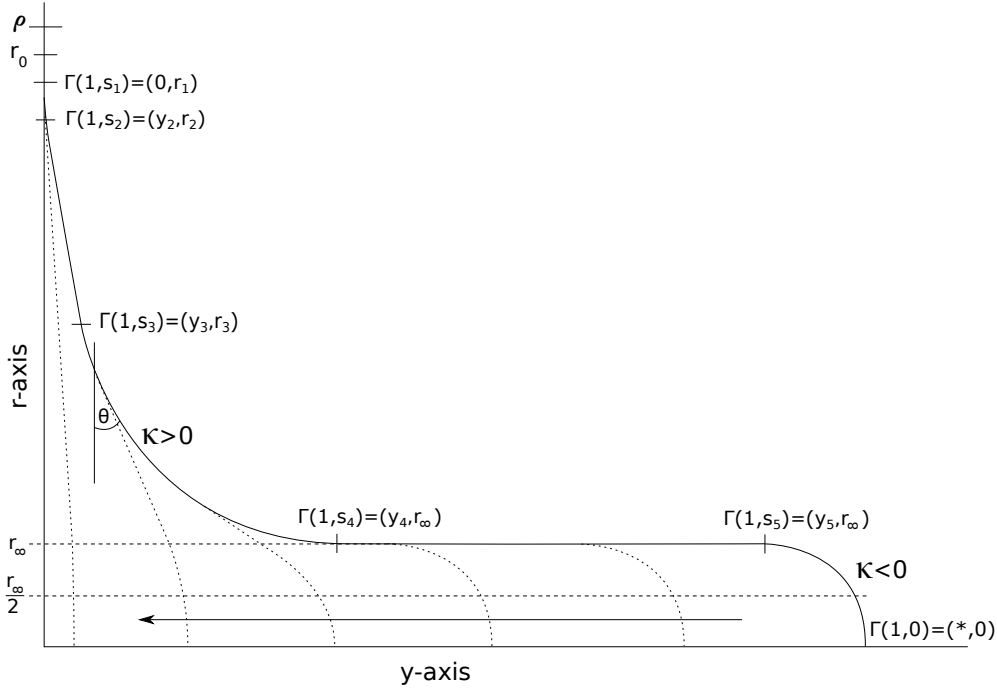}
\caption{A Gromov--Lawson curve $(\gamma,H)$}\label{fig:bendingcurve}
\end{figure}


A Gromov--Lawson curve determines an isotopy of embeddings $E_\Gamma: [0,1] \times M \to M \times \bR$. Write $\Gamma^i_\lambda$, $i=1,2$, for the components of $\Gamma_\lambda$. First we define $E_\Gamma: [0,1] \times N \times \bR^k  \to N \times \bR^k  \times \bR$ by
\begin{equation}\label{eq:gromovlawsonembedding}
E_\Gamma (\lambda,p,v):= (p, \Gamma^2_\lambda (\norm{v}) \frac{v}{\norm{v}},\Gamma_\lambda^1 (\norm{v}))
\end{equation}
(which is smooth by the condition on $\Gamma$ near the $x$-axis). For $\norm{v} \geq \rho$, $E_\Gamma (\lambda,p,v)=(p,v)$, and so we can extend $E_\Gamma (\lambda,\_)$ as the identity over all of $M$. Note that $E_\Gamma (0,\_)$ is just the inclusion $x \mapsto (x,0)$. 
Let $g_{\Gamma_\lambda}$ be the Riemannian metric
\[
g_{\Gamma_\lambda}:= E_\Gamma (\lambda,\_)^* (g+dt^2)
\]
on $M$ obtained by restricting the product metric on $M \times \bR$ to the image of $E_\Gamma (\lambda,\_)$ and pulling back to $M$. 
The key argument for the proof of Proposition \ref{prop:from-all-to-rotationallysymmetric} is the following result.

\begin{prop}\label{prop:glcurve}
Let $K\subset\Riem(M)$ be compact. Suppose that each $g\in K$ is normalized on the $r_0$-tube around $N$ and that $\scal (g) \geq B_g$ for some $B_g$. For every $\epsilon_0>0$, $\eta >0$, there exists a Gromov--Lawson curve $\Gamma$ such that
\begin{enumerate}
\item the outer width of $\Gamma$ is at most $r_0$,
\item the inner width of $\Gamma$ is at most $\epsilon_0$,
\item $\scal(g_{\Gamma_\lambda}) \geq B_g- \eta$ for all $g\in K$ and all $\lambda \in [0,1]$.
\item Moreover, for each $\ell>0$, we can arrange the length to be at least $\ell$.
\end{enumerate}
\end{prop}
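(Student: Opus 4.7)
The plan is to build $\Gamma_1$ as an explicit concatenation of smooth pieces and then to realize the isotopy $\Gamma_\lambda$, $\lambda \in [0,1]$, as a natural one-parameter family interpolating between the trivial curve $\Gamma_0(s) = (0,s)$ and $\Gamma_1$. Fix a small inner height $r_\infty \in (0, \epsilon_0]$, a small angle $\theta_0 \in (0,\pi/2)$, and small arc radii $\rho_1,\rho_2>0$. The curve $\Gamma_1$ leaves the $y$-axis perpendicularly at $s=0$, traces a circular arc of radius $\rho_1$ until its tangent makes angle $\theta_0$ with the $y$-axis, continues along a straight segment inclined at that angle, bends through a second circular arc of radius $\rho_2$ until horizontal at radial height $r_\infty$, and runs horizontally for length $\ell$ before returning to the $r$-axis via a suitable connecting piece. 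All corners are $C^\infty$-smoothed with $C^2$-small perturbation. The family $\Gamma_\lambda$ is obtained by progressively bending the vertical curve; one convenient choice is to let the bending angle scale from $0$ to $\theta_0$ and the horizontal segment have length $\lambda\ell$.

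For the scalar-curvature computation I would use the warped-product description of the induced metric. Parametrize each $\Gamma_\lambda$ by arc length $s$, so $\Gamma_\lambda(s)=(y(s),r(s))$ with $y'^2+r'^2=1$. Since every $g\in K$ is normalized on the $r_0$-tube around $N$, polar coordinates in the normal bundle of $N$ write $g$ on the tube in the form $g = d\tau^2 + \hat g_\tau$, where $\hat g_\tau$ is a metric on $N\times S^{k-1}_\tau$ satisfying $\hat g_\tau = g_N + \tau^2 d\xi^2 + O(\tau^2)$ as $\tau\to 0$, uniformly in $g \in K$. Pulling back the product metric $g+dt^2$ through $E_\Gamma(\lambda, \cdot)$ and using that $(y',r')$ has unit length, one finds that $g_{\Gamma_\lambda}$ has the form $ds^2 + \tilde g_{r(s)}$, where $\tilde g_{r(s)}$ scales the sphere factor of $\hat g_\tau$ by $(r(s)/\tau)^2$. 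Combining Lemma \ref{lem:scalar-curvature-of-trace} with formula \eqref{shortnotation-sclar-of-arping} yields, on the bent portion,
\[
\scal(g_{\Gamma_\lambda})=\scal(g)+(k-1)\left((k-2)\frac{1-r'(s)^2}{r(s)^2}-2\frac{r''(s)}{r(s)}\right)+E(s,g,\Gamma_\lambda),
\]
where the error $E$ is controlled, uniformly in $g\in K$ and $\lambda\in[0,1]$, by the $O(\tau^2)$ non-product corrections of $\hat g_\tau$ and tends to zero as the outer width of the curve shrinks.

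I would then verify the bound piece by piece. On the bent tip (first arc, straight segment, second arc, horizontal piece) one has $r''\leq 0$, so the Gromov--Lawson correction is nonnegative: it equals $k(k-1)/\rho_1^2$ on the first arc, $(k-1)(k-2)\sin^2(\theta_0)/r(s)^2$ on the straight segment, and at least $(k-1)(k-2)/r_\infty^2$ on the second arc and horizontal segment provided $\rho_2$ is chosen sufficiently small relative to $r_\infty$. On the connecting piece that returns to the $r$-axis one has $r''>0$ and the correction may be negative, but $r(s)$ is bounded below by a positive constant there, so the negative contribution is uniformly bounded by a constant depending only on this piece. The desired inequality $\scal(g_{\Gamma_\lambda})\geq B_g-\eta$ follows by choosing the parameters in the order $(r_\infty, \rho_2, \theta_0, \rho_1)$, then designing the returning piece with a controlled curvature, and finally shrinking the outer width so that $|E|\leq \eta$ uniformly over $K\times[0,1]$. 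The length $\ell$ of the horizontal segment can be taken arbitrarily large without affecting the estimates.

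The main obstacle I expect is the uniform control of the negative contribution on the returning piece of the curve together with the error term $E$ across the full family $\{\Gamma_\lambda\}_{\lambda\in[0,1]}$. This is the delicate portion of the construction where, as the authors note in the introduction, a computational error has propagated through the standard references, and where an alternative argument that balances these contributions against the Gromov--Lawson reserve on the bent tip is presumably required.
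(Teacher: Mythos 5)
The broad decomposition you describe (a cap near the $y$-axis, a horizontal piece, a connecting piece returning to the $r$-axis) is indeed the shape of the Gromov--Lawson curve, and some of your piecewise computations on the cap are correct. However, there is a genuine gap exactly where you flag "the main obstacle": the connecting piece.

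You argue that on the connecting piece the correction is negative but "uniformly bounded by a constant depending only on this piece" because $r$ is bounded below. This is not enough. The lower bound for $r$ on that piece is $r_\infty$ (which must be taken at most $\epsilon_0$, hence small), and the negative term $-2(k-1)r''/r$ is large there unless $r''$ is itself controlled. Moreover, the plan to "finally shrink the outer width so that $|E|\leq\eta$" works against you: a smaller outer width forces the connecting piece to turn from horizontal to vertical within a smaller range of $r$, which \emph{increases} $r''$ and makes the negative contribution worse, not better. There is a genuine tension here that the proposal does not resolve. The paper resolves it by splitting the connecting piece into two regimes: a slow initial "kick" away from vertical at moderate $r$, with curvature $q$ chosen so small that the scalar curvature loss is at most $\eta/2$ (this is the piece that costs scalar curvature and sets the bound $-\eta$), followed by a critical bend at small $r$ governed by the ODE $f''=(1+f'^2)/(af)$ with $a>2/(k-2)$, whose solution exactly keeps the curvature $\kappa\leq\sin\theta/(ar)$ small enough that the positive term $(k-1)(k-2)\sin^2\theta/r^2$ dominates. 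Your circular arc of radius $\rho_2$ has constant curvature, which cannot satisfy $\kappa\lesssim\sin\theta/r$ throughout the range of $r$ and $\theta$; as discussed in Remark~\ref{rem:impact-of-mistake-on-GL-construction}, the constant-curvature shortcut corresponds to the choice $a=2$, which already fails at $k=3$.

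Two smaller points. First, your claim that the error $E$ "tends to zero as the outer width of the curve shrinks" is not quite right: the error term in Lemma~\ref{lem:formula} contains a piece of size $C\sin^2\theta/r$ which blows up as $r\to 0$; the correct statement is that it is dominated by the positive term $(k-1)(k-2)\sin^2\theta/r^2$, and the analysis has to be done term-by-term as in the paper, not by a blanket "error goes to zero". Second, your proposal addresses the homotopy $\Gamma_\lambda$ only with a one-line suggestion to "scale the bending angle from $0$ to $\theta_0$ and the horizontal segment length"; verifying that the scalar-curvature estimate holds \emph{for all} $\lambda$, and smoothing the piecewise curves while controlling the estimate, is a substantive part of the paper's proof which needs to be carried out.
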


\subsection{Construction of the Gromov--Lawson curve}

In this subsection, we prove Proposition \ref{prop:glcurve}. We need a formula for the scalar curvature of the metric $g_{\Gamma_\lambda}$. 

Let $I \subset \bR$ be an interval and let $\gamma: I \to \bR^2_+:= \{(y,r) \vert r \geq 0\}$ be a smooth embedded curve. We assume that whenever $\gamma(t)$ lies on the $y$-axis, then near $t$, $\gamma$ follows a circle of possibly infinite radius perpendicular to the $y$-axis. Consider the hypersurface
\[
Q_\gamma := \{(p,v,r) \in N \times \bR^k \times \bR \vert (r, \norm{v}) \in \im (\gamma)\} \subset N \times \bR^k \times \bR \subset M \times \bR
\]
(which is smooth because of the condition on $\gamma$ near the $y$-axis). Let us recall some formulas from the geometry of plane curves. In the situation we consider, the derivative vector of $\gamma$ will lie in the fourth quadrant. We let $\theta$ be the angle between $\gamma$ and the negative $r$-axis and let $\kappa$ be the signed curvature of $\gamma$. 
If $\gamma$ is \emph{parametrized by arc-length}, the curvature is given by 
\begin{equation}\label{eq:curvature-of-curve}
\ddot\gamma(s)=\begin{pmatrix}0&-1\\1&0\end{pmatrix}\cdot \dot\gamma(s)\cdot \kappa(s)
\end{equation}
or 
\[
 \kappa (s) = \langle \ddot{\gamma}(s), \twomatrix{0}{-1}{1}{0} \dot{\gamma}(s)\rangle.
\]
The angle is given by the formula
\[
 \sin (\theta) = \langle \dot{\gamma},e_1\rangle; \;  \cos (\theta) = -\langle \dot{\gamma},e_2\rangle.
\]
Note that
\begin{equation}\label{eq:derivativd-angle}
\frac{d}{ds}\theta(s)= \kappa(s). 
\end{equation}
If $\gamma$ meets the $y$-axis in a circle of radius $\rho<\infty$, then $\kappa =-\frac{1}{\rho}$, $r=\sin (\theta) \rho$ near that point. 

For a given Riemannian metric $g$ on $N\times \bR^k$, we get the Riemannian metric $g_\gamma$ on $Q_\gamma$, obtained by restricting the product metric $g+dt^2$ on $N \times \bR^{k+1}$ to $Q_\gamma$. 
\begin{lem}\label{lem:formula}
Let $K\subset \Riem (M)$ be compact. Assume that all $g \in K$ are normalized on the $r_0$-tube around $N$. Then there exists $0<r_1 \leq r_0$ and $C>0$ such that for all $g \in K$, and for all immersed curves in the region $\{ (y,t) \in \bR^2 \vert 0 < y \leq r_1\}$, we have
\[
\scal(g_\gamma) \ge \scal(g) + |\kappa| \Bigl( -\sign(\kappa) \frac{2(k-1)\sin(\theta)}{r} -C\sin (\theta) \Bigr) + \frac{(k-1)(k-2)\sin^2(\theta)}{r^2} - C \frac{\sin(\theta)^2}{r} .
\]
\end{lem}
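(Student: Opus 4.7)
My plan is to apply the trace formula (Lemma \ref{lem:scalar-curvature-of-trace}) twice --- once to $g$ written in polar coordinates, and once to the induced metric $g_\gamma$ on $Q_\gamma$ --- and then subtract. Since each $g\in K$ is normalized on the $r_0$-tube around $N$, the polar coordinate map gives $g = dt^2 + \bar g_t$ on $N\times(0,r_0)\times S^{k-1}$, where $\bar g_t$ is a smooth $t$-family of metrics on $N\times S^{k-1}$. After reparametrizing $\gamma$ by arc length as $\gamma(s)=(X(s),Y(s))$, the hypersurface $Q_\gamma$ is parametrized by $(p,\omega,s)\mapsto (p, Y(s)\omega, X(s))$, and a direct computation using $(X')^2+(Y')^2=1$ shows that the induced metric is $g_\gamma = ds^2 + \bar g_{Y(s)}$ on $N\times S^{k-1}$. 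Because $\dot\gamma$ lies in the fourth quadrant, $X'=\sin\theta$, $Y'=-\cos\theta$, and $Y''=\sin\theta\cdot\kappa$ by \eqref{eq:derivativd-angle}.

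The second step is to assemble the main identity. By the chain rule,
\[
\partial_s \bar g_{ij}(Y(s)) = -\cos\theta\cdot\bar g_{ij,t}, \qquad \partial_s^2 \bar g_{ij}(Y(s)) = \sin\theta\cdot\kappa\cdot\bar g_{ij,t} + \cos^2\theta\cdot\bar g_{ij,tt}.
\]
Inserting these into Lemma \ref{lem:scalar-curvature-of-trace} applied to $g_\gamma$ expresses $\scal(g_\gamma)$ in terms of $\scal(\bar g_{Y(s)})$ and the $t$-derivatives of $\bar g_t$. Applying the same Lemma to $g = dt^2+\bar g_t$ expresses $\scal(\bar g_t)$ in terms of $\scal(g)$ and those same derivatives. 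Eliminating $\scal(\bar g_t)$ and using $\cos^2\theta-1=-\sin^2\theta$ produces
\[
\scal(g_\gamma) = \scal(g) - \sin\theta\cdot\kappa\cdot T + \sin^2\theta\cdot(U - Q),
\]
evaluated at $t = Y(s) = r$, where
\[
T := \bar g^{kl}\bar g_{kl,t}, \qquad U := \bar g^{kl}\bar g_{kl,tt}, \qquad Q := \tfrac{3}{4}\bar g^{ik}\bar g^{jl}\bar g_{ij,t}\bar g_{kl,t} - \tfrac{1}{4}\bar g^{ik}\bar g^{jl}\bar g_{ik,t}\bar g_{jl,t}.
\]

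The last and main step is a uniform asymptotic analysis of $T$, $U$, $Q$ as $t\to 0$, valid simultaneously for all $g\in K$. Gauss's lemma in Fermi-type coordinates along $N$ (using that rays $t\mapsto(p,tv)$ are unit-speed geodesics of $g$) gives a block decomposition of $\bar g_t$ into an $N\times N$ block equal to $g_N + O(t)$, a cross block of order $O(t^2)$, and a sphere block equal to $t^2 d\xi^2 + O(t^4)$, with all remainders smooth and --- by compactness of $K$ --- uniformly controlled. A direct computation then yields
\[
T = \tfrac{2(k-1)}{t} + O(1), \qquad U = \tfrac{2(k-1)}{t^2} + O\!\left(\tfrac{1}{t}\right), \qquad Q = \tfrac{(k-1)(4-k)}{t^2} + O\!\left(\tfrac{1}{t}\right)
\]
as $t\to 0$, uniformly in $g\in K$ once we shrink $r_0$ to some $r_1\in(0,r_0]$. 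Substituting, the leading contributions combine as $\sin^2\theta(U-Q) = \tfrac{(k-1)(k-2)\sin^2\theta}{r^2} + O(\sin^2\theta/r)$ and $-\sin\theta\cdot\kappa\cdot T = -\sign(\kappa)|\kappa|\tfrac{2(k-1)\sin\theta}{r} + O(|\kappa|\sin\theta)$. Taking absolute values of the error terms turns the equality into the required inequality with a uniform constant $C>0$. The main obstacle is precisely the uniform Taylor expansion of $\bar g_t$ near $t=0$ for an arbitrary normalized metric in Fermi coordinates (as opposed to a product metric), and the verification that the purely algebraic combination of leading coefficients does collapse to $(k-1)(k-2)$ rather than the intermediate expression $(k-1)(4-k)$.
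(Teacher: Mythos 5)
Your proposal is correct and takes a genuinely different route from the paper's. The paper's proof applies the Gauss equation to the hypersurface $Q_\gamma \subset N\times\bR^k\times\bR$ and estimates the principal curvatures $\lambda_1 = \kappa$, $\lambda_j = \sin\theta\,(-\tfrac1r + O(r))$ for the $(k-1)$ sphere directions, and $\lambda_j = \sin\theta\cdot O(1)$ for the $N$-directions, then expands $2\sum_{i<j}\lambda_i\lambda_j$ and rearranges. You instead apply Lemma~\ref{lem:scalar-curvature-of-trace} twice --- once to $g = dt^2 + \bar g_t$ (which is justified by Gauss's lemma for the normal exponential map of $N$, using the normalization hypothesis) and once to the arc-length form $g_\gamma = ds^2 + \bar g_{Y(s)}$ --- and eliminate $\scal(\bar g_t)$ to obtain the exact identity $\scal(g_\gamma) = \scal(g) + \sin^2\theta\,(U-Q) - \kappa\sin\theta\,T$, which is then made quantitative by a uniform Fermi expansion of $\bar g_t$ near $t=0$. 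I checked the algebra you flagged as delicate: with the stated block structure one finds $T = \tfrac{2(k-1)}{t} + O(1)$, $Q = \tfrac{3(k-1)}{t^2} - \tfrac{(k-1)^2}{t^2} + O(1/t) = \tfrac{(k-1)(4-k)}{t^2} + O(1/t)$, hence $U - Q = \tfrac{(k-1)(k-2)}{t^2} + O(1/t)$, and the stated inequality follows after absorbing the lower-order terms into $C$. Your route has the merit that the coefficient $(k-1)(k-2)$ drops out mechanically from a clean identity (precisely the spot where, as the paper notes, \cite{GroLaw}, \cite{RosSto}, \cite{Walsh11} miscount), and that it recycles a lemma already proved in the paper for other purposes. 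The paper's route stays closer to Gromov--Lawson's original presentation via second fundamental forms of tubes. Both ultimately rest on the same uniform Fermi-coordinate Taylor expansion near $N$ and on compactness of $K$ to shrink $r_1$.
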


\begin{rem}
This estimate originates from the curvature formula computed in \cite{GroLaw}, \cite{RosSto}, \cite{Chernysh} or \cite{Walsh11}. These papers however contain a small computational error: There the formula has either $\kappa\sin(\theta)\frac{k-1}{r}$ instead of $\kappa\sin(\theta)\frac{2(k-1)}{r}$ or $2\frac{(k-1)(k-2)}{r^2}$ instead of $\frac{(k-1)(k-2)}{r^2}$. We will point out in the proof of Lemma \ref{lem:formula} where the error occurs and in Remark \ref{rem:impact-of-mistake-on-GL-construction} below, we discuss what impact this has on the proof of Proposition \ref{prop:from-all-to-rotationallysymmetric}.
\end{rem}

\begin{proof}[Proof of \ref{lem:formula}]
We will abbreviate this proof. A detailed computation can be found in the appendix of \cite{Walsh11}. The curvature of $g_\gamma$ is given by 
$$\scal(g_\gamma) = \overbrace{\scal(g+dt^2)}^{=\scal(g)} + 2\sum_{i<j}\lambda_i\lambda_j$$
where $\lambda_i$ are the principal curvatures of the hypersurface $Q_\gamma \subset  N\times\bR^k\times\bR$. These are given by 
$$
\lambda_j = \begin{cases}
\kappa &\text{ if } j = 1\\
\sin(\theta)\cdot\bigl(-\frac1r + O(r)\bigr) &\text{ if } 2\le j\le k\\
\sin(\theta)\cdot O(1) &\text{ if } k+1 \le j \le d.
\end{cases}
$$
where the $O$-terms arise from the coefficients of the Taylor expansion of $g$ and depend continuously on $g$. Therefore we get\footnote{This is where the error \cite{Walsh11} occurs: $\binom{k-1}2=\frac{(k-1)(k-2)}2$ is miscounted as $(k-1)(k-2)$.}:
\begin{align*}
		\sum_{i<j} \lambda_i\lambda_j &= \kappa\sum_{i=2}^k \lambda_i +  \kappa\sum_{i=k+1}^d \lambda_i + \sum_{2\le i<j\le k}\lambda_i\lambda_j + \sum_{2\le i\le k < j\le d}\lambda_i\lambda_j + \sum_{k< i<j\le d}\lambda_i\lambda_j \\
			&= \sin(\theta)\cdot \kappa\cdot(k-1)\cdot\bigl(-\frac1r + O(r)\bigr) + \kappa\cdot O(1)\cdot\sin(\theta)\\
				& + \underbrace{\frac{(k-1)(k-2)}2}_{=\binom{k-1}2}\cdot\underbrace{\bigl(-\frac1r + O(r)\bigr)^2}_{=\Bigl(\frac1{r^2} + O(1)\Bigr)}\cdot\sin(\theta)^2\\	
				& + \bigl(-\frac1r + O(r)\bigr)\cdot O(1)\cdot\sin(\theta)^2 + O(1)\cdot\sin(\theta)^2 .\\
\end{align*}
Rearranging all terms finishes the proof. 
\end{proof}

\begin{cor}\label{cor:curvature-on-little-spehres}
Let $K \subset \Riem (M)$ be compact. Assume that all $g \in K$ are normalized on the $r_0$-tube around $N$. Then for each $B \in \bR$, there exists $\eps_0 >0$ such that for $\eps \in (0,\eps_0)$, the restriction of $g$ to $N \times S^{k-1}_{\eps}$ has scalar curvature at least $B$. 
\end{cor}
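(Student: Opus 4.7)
The plan is to apply Lemma \ref{lem:formula} to the simplest possible curve, namely a horizontal line segment at height $\eps$, and let the positive $\frac{1}{\eps^2}$-term do the work. Concretely, for $\eps \in (0, r_1]$ I would take the unit-speed curve $\gamma(s) = (s, \eps)$, $s \in [0, L]$ for some fixed $L > 0$. This curve stays in the strip $\{0 < r \leq r_1\}$ to which Lemma \ref{lem:formula} applies, its signed curvature vanishes ($\kappa \equiv 0$), and the angle with the negative $r$-axis is $\theta \equiv \pi/2$, so $\sin \theta = 1$. Plugging these into the formula of Lemma \ref{lem:formula} collapses it to
\[
\scal(g_\gamma) \;\geq\; \scal(g) + \frac{(k-1)(k-2)}{\eps^2} - \frac{C}{\eps}.
\]

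Next, I would identify the geometric meaning of the left-hand side. For the horizontal segment $\gamma$ at height $\eps$, the hypersurface $Q_\gamma \subset N \times \bR^k \times \bR$ is precisely $N \times S^{k-1}_\eps \times [0,L]$, and the metric $g_\gamma$ is the restriction of $g + dt^2$ to this submanifold. Since $g$ is independent of the $\bR$-coordinate, this restriction is the product $g|_{N \times S^{k-1}_\eps} + dt^2$. A flat $1$-dimensional factor contributes nothing to the scalar curvature, so
\[
\scal(g_\gamma) = \scal\bigl(g|_{N \times S^{k-1}_\eps}\bigr).
\]

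Finally, I would use the hypotheses on $K$. Since $K \subset \Riem(M)$ is compact and $M$ is compact, the function $(g,x) \mapsto \scal(g)(x)$ attains its infimum on $K \times M$; call this lower bound $B_0$. Combining the two displays gives, for every $g \in K$ and every $\eps \in (0, r_1]$,
\[
\scal\bigl(g|_{N \times S^{k-1}_\eps}\bigr) \;\geq\; B_0 + \frac{(k-1)(k-2)}{\eps^2} - \frac{C}{\eps}.
\]
Because $k \geq 3$ we have $(k-1)(k-2) \geq 2 > 0$, so the right-hand side tends to $+\infty$ as $\eps \to 0^+$. Given $B \in \bR$, choosing $\eps_0 \in (0, r_1]$ small enough that the right-hand side exceeds $B$ for all $\eps \in (0, \eps_0)$ finishes the proof.

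There is no real obstacle here; the statement is essentially a direct corollary of Lemma \ref{lem:formula} once one notices that the most favorable choice of curve for the estimate — the one that maximizes $\sin\theta$ and kills $\kappa$ — is simply a horizontal segment, and that the corresponding $Q_\gamma$ is the product of the desired slice with an interval. The only bookkeeping detail is verifying the product-metric identification of $g_\gamma$, which is immediate from the definition of $E_\Gamma$ in \eqref{eq:gromovlawsonembedding} specialized to the horizontal segment.
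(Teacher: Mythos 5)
Your proof is correct and matches the paper's argument exactly: both take the horizontal curve $\gamma(s)=(s,\eps)$, observe $\kappa=0$ and $\theta=\pi/2$, identify $g_\gamma$ with the product metric on $N\times S^{k-1}_\eps\times\bR$, and let the $\frac{(k-1)(k-2)}{\eps^2}$ term dominate. The extra detail you give about uniformity via compactness of $K\times M$ is implicit in the paper but spelling it out is harmless.
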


\begin{proof}
Consider the curve $\gamma (s):= (s,\eps)$. The restriction of the product metric $g + dt^2$ to $Q_\gamma$ is the product of $g|_{N \times S^{k-1}_{\eps}}$ and $dt^2$. Hence $\scal (g_\gamma)= \scal (g|_{N \times S^{k-1}_{\eps}})$. In the case at hand, the curvature of $\gamma$ is $\kappa=0$, and $\theta = \frac{\pi}{2}$. Hence from Lemma \ref{lem:formula}, we get
\[
\scal(g_\gamma) \ge \scal(g)+ \frac{(k-1)(k-2)}{\eps^2} - \frac{C}{\eps}.
\]
If $\eps$ is small enough, the term $\frac{(k-1)(k-2)}{\eps^2}$ dominates all other terms. 
\end{proof}



Another auxiliary result is needed for the proof of Proposition \ref{prop:glcurve}.

\begin{lem}\label{lem:diffeq}
Let $a>0$ and consider the ordinary differential equation 
\begin{equation}\label{diffeq1}
	h''= \frac{1+ {h'}^2}{a\cdot h}.
\end{equation}
For any choice of initial values $h(t_0)>0$ and $h'(t_0)<0$, there is $T>t_0$ and a solution $h: [t_0,T] \to (0,\infty)$ such that $h' \le 0$ and $h'(T) = 0$.
\end{lem}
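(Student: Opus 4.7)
The plan is to exploit the fact that the right-hand side of \eqref{diffeq1} is strictly positive whenever $h>0$, together with an explicit first integral.

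First I would observe that for any solution with $h>0$ one has $h''>0$, so that $h'$ is strictly increasing. Hence as long as the solution exists, $h'$ remains $\leq 0$ up to the first time (if any) when it reaches $0$. Second, I would derive a first integral. Multiplying \eqref{diffeq1} by $h'/(1+(h')^2)$ and integrating yields
\[
\tfrac{1}{2}\log\bigl(1+(h')^2\bigr)=\tfrac{1}{a}\log h + \text{const},
\]
so that any solution satisfies
\[
1+(h'(t))^2 = \frac{1+h'(t_0)^2}{h(t_0)^{2/a}}\,h(t)^{2/a}
\]
as long as it exists with $h>0$. In particular, the equation $h'(t)=0$ is equivalent to $h(t)=h_{\min}:= h(t_0)\bigl(1+h'(t_0)^2\bigr)^{-a/2}>0$.

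Third, I would use this first integral to prove existence up to the desired time. As long as $h'\leq 0$, the function $h$ is non-increasing and stays in the interval $[h_{\min},h(t_0)]$, while the identity above also yields $|h'(t)|\leq |h'(t_0)|$. Thus, on its maximal interval of existence, the solution remains in a compact subset of the domain $\{h>0\}$ where the right-hand side of \eqref{diffeq1} is smooth, so the solution extends up to (and including) the first time $T$ at which $h'(T)=0$. Finally, to see that such $T$ is finite, I would note that on this interval
\[
h''(t) = \frac{1+(h'(t))^2}{a\,h(t)}\geq \frac{1}{a\,h(t_0)},
\]
so $h'$ increases at a uniformly positive rate and must reach $0$ after time at most $a\,h(t_0)\,|h'(t_0)|$.

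The only step that requires any thought is ruling out the two possible bad scenarios, namely that $h$ reaches $0$ before $h'$ does, or that $h'$ blows up; both are handled simultaneously by the explicit first integral, which is what makes this lemma essentially elementary rather than delicate.
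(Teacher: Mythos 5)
Your proof is correct, and it is essentially the ``explicit'' route that the paper mentions in the remark following its own proof but declines to take. Both you and the paper hinge on the same conserved quantity (your first integral $1+(h')^2 = Kh^{2/a}$ is exactly the statement that the paper's $C(t)=h^{1/a}/\sqrt{1+(h')^2}$ is constant), but you use it more thoroughly. The paper argues by dichotomy on whether the maximal time of existence $T_1$ is finite or infinite: in the infinite case it uses the lower bound $h''\ge 1/(a\,h(t_0))$ exactly as you do, but in the finite case it appeals to a somewhat informal phase-diagram argument (``the shape of the level set is so that $h(t)\to+\infty$'') followed by Rolle's theorem. You instead observe that the conserved quantity pins the trajectory to the compact rectangle $[h_{\min},h(t_0)]\times[-|h'(t_0)|,0]$ of the phase plane as long as $h'\le 0$, which simultaneously rules out $h\to 0$ and $|h'|\to\infty$ and hence rules out finite-time breakdown before $h'$ reaches zero; the $h''\ge 1/(a\,h(t_0))$ bound then gives a clean quantitative time estimate $T-t_0\le a\,h(t_0)\,|h'(t_0)|$. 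The net effect is that you avoid the case split and the appeal to the geometry of the level set entirely, yielding a shorter and more self-contained argument, at the modest cost of spelling out the first integral explicitly rather than just checking that $C'(t)=0$.
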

\begin{proof}
Let $h\colon [t_0,T_1)\to\bR$, $T_1\in(t_0,\infty]$ be a maximal solution. We do not want to decide whether $T_1=\infty$ or $T_1 <\infty$ and show that both cases lead to the desired conclusion. The quantity
$$C(t):=h^{\frac1a}(t)\cdot \frac{1}{\sqrt{1+h'(t)^2}}$$ 
is constant as can be seen by differentiating. Also $C(t)>0$ because of the initial conditions, and $h$ is bounded from below by $C(t_0)^a$. 

Suppose first $T_1 =\infty$. If $h'(t) <0$ for all $t \geq t_0$, then $h$ is decreasing and
\[
h''(t)=\frac{1+ {h^\prime(t)}^2}{a\cdot h(t)}\ge\frac1{a\cdot h(t)}\ge\frac1{a\cdot h(t_0)}=:b>0
\]
implies $h'(t) \geq b (t-t_0)+h'(t_0)$, which is a contradiction. Hence there is $T>t_0$ with $h'(T)=0$. 

If $T_1 <\infty$, we consider the trajectory of $(h(t),h'(t))$ in the phase diagram. Since $C(t)$ is constant, this trajectory lies on the level set $C^{-1}(C(t_0))$. Because $T_1<\infty$, this trajectory leaves every compact subset of $\bR^2$. The shape of the level set is so that this implies $\lim_{t \to T_1} h(t)=+\infty$. Hence by Rolle's theorem, $h'(T)=0$ for some (minimal) $T>t_0$.
\end{proof}

\begin{rem}
One can solve \eqref{diffeq1} explicitly, using that $C$ is conserved. The above proof seems more efficient to us, though.
\end{rem}

\begin{proof}[Proof of Proposition \ref{prop:glcurve}]
We first construct a piecewise $C^2$ curve $\alpha$ and a homotopy $\alpha_\lambda$ of such curves. By a smoothing procedure, we obtain a homotopy $\beta_\lambda$ of smooth curves which will yield $\Gamma_\lambda$ by a suitable reparametrization. We begin with the curve $\alpha=\alpha_1$. Let us pick some constants first. 
\begin{enumerate}
\item Choose $a >  \frac{2}{k-2}$ (note that $-\frac{2}{a} +k-2 >0$).
\item Choose $\rho>r_0$ arbitrarily. 
\item $0<r_1\leq r_0$ is chosen, so that the curvature estimate from Lemma \ref{lem:formula} is valid for $r\leq r_1$, with a constant $C>0$.
\item Next, we choose $0<r_2 \leq r_1$ so that
\[
 r_2 \leq \frac{k-1}{C}
\]
\item and pick $r_3 \in (0,r_2)$ arbitrarily.
\end{enumerate}
Let us explain the choice of $r_2$. 
\begin{claim}\label{claim-GLcurveproof}
If $\gamma$ is an immersed curve in the region $\{(y,r)\vert r \in (0,r_2]\}$ whose signed curvature $\kappa$ is nonpositive, then $\scal(g_\gamma) \geq B_g$. 
\end{claim}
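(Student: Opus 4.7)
The plan is to plug the hypotheses $\kappa \le 0$ and $r \in (0,r_2]$ directly into the pointwise scalar-curvature estimate of Lemma \ref{lem:formula} and verify that the two correction terms beyond $\scal(g)$ are both non-negative with the chosen thresholds $r_1, r_2, r_3$.

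First I would regroup the right-hand side of Lemma \ref{lem:formula} in the form
\[
\scal(g_\gamma) \;\ge\; \scal(g) \;+\; |\kappa|\sin(\theta)\Bigl(-\sign(\kappa)\tfrac{2(k-1)}{r}-C\Bigr) \;+\; \tfrac{\sin^2(\theta)}{r}\Bigl(\tfrac{(k-1)(k-2)}{r}-C\Bigr),
\]
so the task reduces to showing that each of the two bracketed expressions is non-negative in the prescribed region.

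For the second bracket, I would use that $k\ge 3$ implies $(k-1)(k-2)\ge k-1$, hence combined with $r\le r_2\le \frac{k-1}{C}$ we get $\frac{(k-1)(k-2)}{r}\ge \frac{k-1}{r}\ge C$, so this group is $\ge 0$. For the first bracket, the assumption $\kappa\le 0$ gives $-\sign(\kappa)\ge 0$; in the generic case $\kappa<0$ the term becomes $\frac{2(k-1)}{r}-C$, and $r\le \frac{k-1}{C}$ yields $\frac{2(k-1)}{r}\ge 2C$, so the bracket is at least $C>0$. If $\kappa=0$, the prefactor $|\kappa|$ kills the whole group. Either way the group is $\ge 0$. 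Combining gives $\scal(g_\gamma)\ge \scal(g)\ge B_g$ as required.

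There is no real obstacle here beyond careful bookkeeping: the entire content of the claim is that the two specific thresholds chosen just before the claim (namely $r_2\le \frac{k-1}{C}$ together with $k\ge 3$) are exactly what is needed to absorb the error constant $C$ from Lemma \ref{lem:formula} into the two geometrically positive leading terms $\frac{(k-1)(k-2)\sin^2\theta}{r^2}$ and, when $\kappa<0$, $-\kappa\cdot\frac{2(k-1)\sin\theta}{r}$. The one point worth double-checking is that the signs in Lemma \ref{lem:formula} were written so that $|\kappa|(-\sign(\kappa))=-\kappa$, which is what makes $\kappa\le 0$ the favorable sign; this is also the place where the corrected coefficient $2(k-1)$ (as opposed to $k-1$ in the mistaken references) is used, though for the present claim only the positivity and not the precise coefficient is needed.
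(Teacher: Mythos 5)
Your proof is correct and takes essentially the same approach as the paper: plug the hypotheses $\kappa\le 0$ and $r\le r_2\le \frac{k-1}{C}$ into the estimate of Lemma \ref{lem:formula}, factor out $\frac{\sin\theta}{r}$ from each of the two correction terms, and verify that the resulting brackets are nonnegative. Your intermediate bound uses $(k-1)(k-2)\ge k-1$ rather than the paper's $\frac{(k-1)(k-2)}{r}-C\ge (k-3)C$, but these are trivial variants of the same calculation, and your remark that the corrected coefficient $2(k-1)$ is not actually needed for this particular claim (only its positivity) is accurate.
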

To see this, estimate 
\begin{equation}\label{eq:proofglcurve1}
  \frac{(k-1)(k-2)\sin^2(\theta)}{r^2} - C \frac{\sin(\theta)^2}{r} = \frac{\sin(\theta)^2}{r} \Bigl( \frac{(k-1)(k-2)}{r} - C\Bigr) \geq \frac{\sin(\theta)^2}{r} (k-3)C \geq 0,
\end{equation}
using $r \leq r_2$. If $\kappa \leq 0$, then 
\begin{equation}\label{eq:proofglcurve2}
|\kappa| \Bigl( -\sign(\kappa) \frac{2(k-1)\sin(\theta)}{r} -C\sin (\theta) \Bigr)= \sin(\theta) |\kappa| \Bigl( \frac{2(k-1)}{r}-C \Bigr)  \geq 0.
\end{equation}
Together with Lemma \ref{lem:formula}, these two inequalities establish Claim \ref{claim-GLcurveproof}.

Let us now construct the first part of $\alpha$. One device to construct a (unit speed) curve is by prescribing its curvature function. More precisely, let $J \subset \bR$ be an interval and $s_0 \in J$. If a function $\kappa:J \to \bR$ and initial values $\gamma (s_0)$ and $\dot{\gamma}(s_0)$ (the latter of unit length) are given, then the solution to the differential equation 
\[
 \ddot{\gamma}(s) =\kappa (s) \twomatrix{}{-1}{1}{} \dot{\gamma}(s)
\]
is a unit speed curve with curvature function $\kappa$. If $\kappa$ is piecewise continuous, then $\gamma$ is piecewise $C^2$. We write $\theta(s)$ for the angle of the curve $\gamma(s)$.
\begin{enumerate}
\item Choose $0<\delta <\frac{r_2-r_3}{3}$. Consider the function\footnote{$\chi_S: X \to \{0,1\}$ denotes the characteristic function of a subset $S \subset X$.} $\kappa(s) = q  \chi_{[\delta,2\delta]}(s)$, for some $q>0$, and the unit speed curve $\alpha$ on $[0,\infty)$ with initial values $\alpha(0)=(y_2,r_2):=(0,r_2)$ and $\dot\alpha(0)=(0,-1)$ and curvature function $\kappa$. If $q \delta< \frac{\pi}{2}$, the angle of the curve $\alpha$ will always be less than $\frac{\pi}{2}$, and so it crosses the horizontal line of height $r_3$ in some point $(y_3,r_3)$, with an angle
\[
\theta_0 = q \delta < \frac{\pi}{2}.
\]
If $q$ satisfies 
\[
 q \Bigl( \frac{2(k-1)}{r_3} + C \Bigr) \leq \frac{1}{2}\eta,
\]
we claim that $\scal(g_\alpha) \geq B_g-\frac{1}{2}\eta$. This follows from \eqref{eq:proofglcurve1}, Lemma \ref{lem:formula}, and the estimate 
\begin{equation}\label{proof-gl-curv-estimate1}
  -  \frac{2(k-1)\kappa\cdot\sin(\theta)}{r}  - C|\kappa| \sin (\theta)  \geq - \frac{2(k-1) q}{r_3} -Cq \geq - \frac{1}{2}\eta.
\end{equation}
\item Now we pick $r_4 >0$ so that $r_4 \leq \epsilon_0$, $r_4<r_3$ and 
\[
 r_4 \leq \frac{(k-1) \sin(\theta_0)^2 (-\frac{2}{a}+k-2)}{C(1+\frac{1}{a})}.
\]
Between height $r_3$ and $r_4$, the curve $\alpha$ follows the straight line of slope $\theta_0$ (there is no problem with the psc condition, by Claim \ref{claim-GLcurveproof}). It crosses the horizontal line of height $r_4$ at a certain point $(y_4,r_4)$. If after that point, the curve $\alpha$ satisfies
\begin{equation}\label{eq:conditions-for-critialbend}
 \theta_0 \leq \theta \leq \frac{\pi}{2}, \; 0 \leq \kappa \leq \frac{\sin (\theta)}{ar},\;0< r\leq r_4,
\end{equation}
we estimate, using that $a >  \frac{2}{k-2}$,
\[
 |\kappa| \Bigl( - \sign(\kappa) \frac{2(k-1) \sin(\theta)}{r}- C \sin(\theta) \Bigr) + \Bigl( \frac{(k-1)(k-2) \sin(\theta)^2}{r^2}- C \frac{\sin(\theta)^2}{r} \Bigr)\geq 
\]
\[
\frac{\sin(\theta)}{ar} \Bigl( -  \frac{2(k-1) \sin(\theta)}{r}- C \sin(\theta) \Bigr) + \Bigl( \frac{(k-1)(k-2) \sin(\theta)^2}{r^2}- C \frac{\sin(\theta)^2}{r} \Bigr) = 
\]
\[
 = \frac{\sin(\theta)^2 (k-1)}{r^2} \Bigl( -\frac{2}{a} + k-2  \Bigr) - \frac{C \sin(\theta)^2}{r}\Bigl( 1 + \frac{1}{a} \Bigr) \geq 
\]
\[
\geq  \frac{\sin(\theta_0)^2 (k-1)}{r r_4} \Bigl( -\frac{2}{a} + k-2  \Bigr) -  \frac{C }{r} \Bigl( 1 + \frac{1}{a} \Bigr) = 
\]
\[
 \frac {1}{r} \Bigl(  \frac{\sin(\theta_0)^2 (k-1) (-\frac{2}{a}+k-2)}{r_4} - C(1+\frac{1}{a})  \Bigr) \geq 0
\]
by the definition of $r_4$.
Altogether, Lemma \ref{lem:formula} shows that \eqref{eq:conditions-for-critialbend} implies
\begin{equation}\label{proof-gl-curv-estimate2}
\scal(g_\gamma) \geq B_g
\end{equation}
in this region.

We construct the curve $\alpha$ satisfying \eqref{eq:conditions-for-critialbend} as the graph of a function $f:[y_4,y_5]\to \bR$. For curves of the form $t\mapsto (t,f(t))$, we have 
\[
\kappa=\frac{f''}{(\sqrt{1+f'^2})^3}; \; \sin(\theta)=\frac{1}{\sqrt{1+f'^2}} ,
\]
see e.g. \cite[p. 41]{Baer}. Hence if we take $f$ as the solution of the ordinary differential equation
\[
 f'' = \frac{1+{f'}^2}{a f}
\]
with initial values 
\[
f(y_4)=r_4; \;  f' (y_4)= -\frac{\cos(\theta_0)}{\sin(\theta_0)}<0,
\]
then the curve $\alpha (t)=(t,f(t))$ satisfies \eqref{eq:conditions-for-critialbend}. By Lemma \ref{lem:diffeq}, there is a solution $f: [y_4,y_5] \to (0,\infty)$ with $f' \leq 0$ and $f'(y_5)=0$. Let $r_5 := f(y_5)>0$, and we let $\alpha$ be the graph of $f$ in this region. 
\item From the point $(y_5,r_5)$ on, the curve $\alpha$ follows a straight horizontal line, of length $2 \ell$ (a little more than $\ell$ would suffice), until it reaches the point $(y_6,r_6):=(y_5+2 \ell,r_5)$. Since $\kappa \equiv 0$, there is no problem with the psc condition here, by Claim \ref{claim-GLcurveproof}. We let $r_\infty:=r_5=r_6$. The last piece of the curve (until it hits the $y$-axis) will be constructed at the end of the proof.
\end{enumerate}
Now we parametrize the curve $\alpha$ by arclength, beginning at the point $\alpha(s_2)=(0,r_2)$ and call the reparametrized curve also $\alpha$. Let $s_6>s_5>s_4>s_3>s_2$ be the points with $\alpha(s_i)=(y_i,r_i)$. The curve $\alpha$ is entirely determined by its curvature function $\kappa$. The function $\kappa$ is zero outside the intervals $[s_0+\delta,s_0+2 \delta]$ and $[r_4,r_5]$. We have
\[
 \kappa|_{[s_0+\delta,s_0+2 \delta]}=q>0; \; \kappa_{[r_4,r_5]} >0.
\]
Now we pick $0 < \omega \ll \min (\frac{r_5}{2},\delta,\ell)$. Let us now construct a homotopy of piecewise $C^2$-curves, which are defined on intervals of varying length $[s_2, s_6(\lambda)]$. Let $s_6(1):= s_6$ and $\alpha_1:= \alpha$. During the homotopy interval $[\frac{1}{2},1]$, we shrink down the size of the horizontal piece until it is $\omega$ (so that $s_6(\frac{1}{2})= s_5+\omega$), and $s_i (\lambda)= s_i$, for $i=2,3,4,5$, $\lambda \in [\frac12,1]$. 

For $\lambda \in [0,\frac12]$, consider the curvature function $\kappa_\lambda: = \chi_{[s_2, s_2 + 2\lambda (s_5-s_2)]}\kappa$ and let $s_5(\lambda)$ be the point where the curve $\alpha_\lambda$ with curvature function $\kappa_\lambda$ reaches the horizontal line of height $r_5$ (it is always the case that $s_5 (\lambda)\geq 2\lambda (s_5-s_2)]$) and put $s_6 (\lambda):= s_5 (\lambda) + \omega$.

By construction, the curves $\alpha_\lambda$ satisfy the psc condition $\scal (g_{\alpha_\lambda})\geq B_g-\frac{\eta}{2}$, $\alpha_0$ is the straight line on the $r$-axis, and $\alpha_1=\alpha$. The curves $\alpha_\lambda$ are $C^1$ and piecewise $C^2$, and we need to smoothen them. 

To that end, pick an even, smooth, nonnegative bump function $\xi$ with support in $(-\frac{1}{4},\frac{1}{4})$ and integral $1$ and let $\xi_{u}(t):= \frac{1}{u} \xi(u t)$. For $u \in (0,\omega]$, we define the smooth curve 
\[
\beta_{\lambda,u}:= \xi_u \ast \alpha_\lambda
\]
using convolution. If $u\leq  \omega$, then $\beta_{\lambda,u}\equiv \beta_\lambda$ near $s_6(\lambda)$ and near $s_2$, and $\beta_{0,u}$ lies on the $r$-axis. This holds because $\xi_u \ast f (t)= f(t)$ if $f$ is linear near $t$. 

For small enough $u$, the curve $\beta_{\lambda,u}$ satisfies the positive scalar curvature condition, namely $\scal(g_{\beta_{\lambda,u}})\geq B_g-\eta$. This is no issue at point near which $\kappa_\lambda$ is continuous. Near the discontinuity points, the angle and height of $\beta_{\lambda,u}$ is close to that for $\alpha_\lambda$, while the curvature of $\beta_{\lambda,u}$ oscillates between the minimum and maximum value of $\kappa_\lambda$. The decisive estimates \eqref{eq:proofglcurve1}, \eqref{proof-gl-curv-estimate1} and \eqref{proof-gl-curv-estimate2} all hold if $\kappa$ lies between $0$ and the allowed maximum value. Note, however, that we might loose a bit scalar curvature. 

To construct the last piece of the curves $\beta_{\lambda,u}$ on an interval $[s_6(\lambda),s_7(\lambda)]$, we take a smooth family of curves $\gamma_\lambda: [s_6(\lambda),s_7(\lambda)] \to \bR^2$, such that
\begin{itemize}
\item $\gamma_\lambda$ begins at the point $\beta_{\lambda,u} (s_6(\lambda))$, as a straight line with the same angle as $\beta_{\lambda,u}$,
\item $\gamma_{\lambda} (s_7(\lambda))$ lies on the $y$-axis, and except on the interval $[s_6(\lambda),s_6(\lambda)+\omega]$, it is a circle,
\item the curvature of $\gamma_\lambda$ is $\leq 0$.
\end{itemize}
These conditions enforce that $\gamma_0$ lies on the $r$-axis. The construction of such curves is easy and left to the reader. By Claim \ref{claim-GLcurveproof}, there is no problem with the psc condition. 

Finally, the curve $\Gamma_\lambda$ is obtained by reparametrization (of the form $\Gamma_\lambda(s):= \beta_{\lambda,u}(s_7(\lambda)-s)$). It is extended to all of $[0,\infty)$, so that above $\rho$, it is just the curve $s \mapsto (0,s)$.
\end{proof}

\begin{rem}\label{rem:impact-of-mistake-on-GL-construction}
The above proof is almost the same as that of the corresponding result in \cite{RosSto} or \cite{Walsh11}. The difference is that in loc.cit., the slightly incorrect version of the curvature formula \eqref{lem:formula} is used. This allows the choice $a=2$ in the quoted papers. In that case, the differential equation \eqref{diffeq1} has a simple explicit solution. We can pick $a=2$ if $k>4$, but if $k=3$, we need $a>2$, and the argument in loc.cit. does not work as stated there.
\end{rem}

\subsection{Completion of the proof of Proposition \ref{prop:from-all-to-rotationallysymmetric}}

We now give the proof of Proposition \ref{prop:from-all-to-rotationallysymmetric}. We shall use the following well-known criterion for a map to be a weak equivalence.
\begin{prop}\label{prop:heq}
Let $j:X \to Y$ be the inclusion of a subspace. Then the following are equivalent:
\begin{enumerate}
\item $j$ is a weak homotopy equivalence,
\item for every $n \geq 0$ and every map $G_0\colon D^n\to Y$ such that $G(S^{n-1})\subset X$, there exists a homotopy $G_\lambda$ starting with $G_0$ such that $G_1(D^n)\subset X$ and $G_\lambda(S^{n-1}) \subset X$  for all $\lambda \in [0,1]$.
\end{enumerate}
\end{prop}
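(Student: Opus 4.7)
The plan is to prove both implications through the long exact sequence of the pair $(Y, X)$, identifying condition (2) with the vanishing of the relative homotopy groups $\pi_n(Y, X)$.

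First I would handle (1) $\Rightarrow$ (2). Assuming $j$ is a weak equivalence, the long exact sequence of the pair $(Y, X)$ gives $\pi_n(Y, X, x_0) = 0$ for every $x_0 \in X$ and every $n \geq 1$. Given $G_0 : D^n \to Y$ with $G_0(S^{n-1}) \subset X$ and $n \geq 1$, I would pick a basepoint $* \in S^{n-1}$, set $x_0 := G_0(*)$, and regard $G_0$ as a class in $\pi_n(Y, X, x_0)$. Vanishing of this class provides a based homotopy of triples $G_\lambda$ to the constant map at $x_0$; in particular $G_\lambda$ is a homotopy of pairs $(D^n, S^{n-1}) \to (Y, X)$ of the kind required by (2). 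For $n = 0$ the condition $G_0(S^{-1}) \subset X$ is vacuous, and surjectivity of $\pi_0(X) \to \pi_0(Y)$ directly yields the required path.

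For the converse (2) $\Rightarrow$ (1), the $n = 0$ case of (2) gives surjectivity of $\pi_0(X) \to \pi_0(Y)$, and applying the $n = 1$ case to a path in $Y$ joining two points of $X$ yields injectivity on $\pi_0$. For $n \geq 1$, I would take a class in $\pi_n(Y, X, x_0)$ represented by $f : (D^n, S^{n-1}, *) \to (Y, X, x_0)$, and apply (2) to produce an unbased homotopy $G_\lambda$ of maps of pairs from $f$ to a map into $X$. The path $\lambda \mapsto G_\lambda(*)$ lies in $X$, and the standard change-of-basepoint action of this path on the relative homotopy groups converts the unbased homotopy into a based one, showing $[f] = 0$. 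Hence $\pi_n(Y, X, x_0) = 0$ for every $n \geq 1$ and every $x_0 \in X$, and the long exact sequence forces $j_*$ to be an isomorphism on all $\pi_n$.

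The main subtlety will be bridging the free homotopy appearing in (2), where $S^{n-1}$ may move through $X$ arbitrarily, and the based homotopy classes comprising $\pi_n(Y, X, x_0)$. This is a standard point: a free homotopy of based maps can be upgraded to a based null-homotopy by conjugating with the trace of the moving basepoint, an operation that stays inside $X$ and hence respects the pair.
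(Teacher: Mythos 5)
Your argument is correct: it is the standard identification of condition~(2) with the vanishing of all relative homotopy groups $\pi_n(Y,X,x_0)$, together with the long exact sequence of the pair and the change-of-basepoint isomorphism to pass from the free homotopy in~(2) to the based one defining $\pi_n(Y,X,x_0)$. The paper states Proposition~\ref{prop:heq} without proof, as a well-known criterion, so there is no argument in the text to compare against; your write-up is a valid and essentially canonical proof of it. The one point to make fully explicit when writing it out is the direction just as you flagged: if $\gamma(\lambda)=G_\lambda(*)$ is the basepoint trace (a path in $X$ from $x_0$ to $x_1=G_1(*)$), then $[G_0]=\gamma_\#[G_1]$ in $\pi_n(Y,X,x_0)$, and $[G_1]=0$ because $G_1$ has image in $X$ and $D^n$ contracts to $*$; this is exactly the conjugation step you describe.
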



So we let 
\begin{equation}\label{proof-firstpart-diag1}
 \xymatrix{
 S^{n-1} \ar[d] \ar[r]^-{G_0} & \Riem^+_{rot} (M) \ar [d] \\
 D^n \ar[r]^-{G_0} & \Riem^+ (M)
 }
\end{equation}
be a commutative diagram, and we have to produce a homotopy $G: [0,1] \times D^n \to \Riem^+ (M)$ such that $G(0,x)=G_0(x)$ for all $x \in D^n$ and $G(t,x) \in \Riem^+_{rot}(M)$ if $(s,x) \in (\{1\} \times D^n) \cup ([0,1] \times S^{n-1})$. Recall that $\Riem^+_{rot}(M)$ denotes the space of psc metrics which are of the form $g_N + g_0$ on $N \times B_R^k$, for some normalized rotationally invariant metric $g_0$ on $B_R^k$ with $\scal(g_0)>0$. 

The first step is an application of Proposition \ref{prop:adapting-tubularneighborhoods}.

\begin{lem}\label{lem:second-normalizationlemma}
There is a family $G'(s,x)$, $(s,x) \in [0,1] \times D^n$ of Riemannian metrics on $M$ and $r_0\in (0,R)$ such that
\begin{enumerate}
 \item the metric $G'(0,x)$ has positive scalar curvature,
 \item $G'(0,x)\in \Riem^+_{rot}(M)$ for all $x \in S^{n-1}$, 
 \item the map $G'(0,\_):(D^ n,S^{n-1})\to (\Riem^+ (M), \Riem^+_{rot}(M))$ is homotopic to $G_0$ (as a map of space pairs),
 \item for all $(s,x) \in [0,1] \times D^n$, the metric $G'(s,x)$ is normalized on the $r_0$-tube around $N$,
 \item for all $(s,x) \in ([0,1] \times S^{n-1}) \cup (\{1\} \times D^n)$, the metric $G'(s,x)$ is rotationally symmetric on the $r_0$-tube around $N$, i.e. $G'(s,x) = g_N + g(s,x)$ for some rotationally symmetric $g(s,x)$.
\end{enumerate}
\end{lem}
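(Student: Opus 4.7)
The plan proceeds in two stages: first normalize using Proposition~\ref{prop:adapting-tubularneighborhoods}, then symmetrize via a cutoff interpolation.

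First, I would apply Proposition~\ref{prop:adapting-tubularneighborhoods} with $(K,L) = (D^n, S^{n-1})$ and $G = G_0$. The hypothesis is satisfied, since $G_0(x) \in \Riem^+_{rot}(M)$ for $x \in S^{n-1}$ is by definition normalized on the full $R$-tube. The proposition produces some $r_1 \in (0,R]$ and a continuous family $F\colon[0,1]\times D^n \to \Diff(M)$ with $F(t,x) = \id$ on $(\{0\}\times D^n) \cup ([0,1]\times S^{n-1})$, restricting to the identity on $N$, and such that $\widetilde G_0(x) := F(1,x)^* G_0(x)$ is normalized on the $r_1$-tube for every $x \in D^n$. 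Since diffeomorphism pullback preserves positive scalar curvature, $\widetilde G_0$ takes values in $\Riem^+(M)$; the pullback homotopy $(t,x)\mapsto F(t,x)^* G_0(x)$ is stationary on $S^{n-1}$ (where its value $G_0(x)$ already lies in $\Riem^+_{rot}(M)$) and thus witnesses $\widetilde G_0 \simeq G_0$ as maps of pairs $(D^n,S^{n-1})\to (\Riem^+(M),\Riem^+_{rot}(M))$. Setting $G'(0,x) := \widetilde G_0(x)$ then secures properties (1), (2), and (3).

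Second, I would construct the $s$-deformation that symmetrizes the metric on a tube of some smaller radius $r_0 < r_1$. Normalization yields the Gauss-lemma splitting $g|_{N\times B_{r_1}^k} = dr^2 + h_{g,r}$, where $h_{g,r}$ is a smooth family of metrics on $N\times S^{k-1}_r$. I would define a symmetrization operator $\sigma$ by first averaging $h_{g,r}$ over the $O(k)$-action on the $\bR^k$-factor and then projecting onto metrics of the product form $g_N + f_g(r)^2\, d\xi^2$ (i.e.\ discarding mixed terms and replacing the $N$-factor by the fixed $g_N$). Then $\sigma(g)$ depends continuously on $g$, fixes every metric already of this product-symmetric form, and is itself normalized. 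Fix a smooth cutoff $\chi\colon[0,\infty)\to[0,1]$ equal to $1$ on $[0,r_0]$ and supported in $[0,r_1)$, and set
\[
G'(s,x) := dr^2 + \bigl(1-s\chi(r)\bigr)\, h_{\widetilde G_0(x),r} + s\chi(r)\, h_{\sigma(\widetilde G_0(x)),r}
\]
on the $r_1$-tube, and $G'(s,x) := \widetilde G_0(x)$ outside. The $dr^2$ summand is preserved, so every $G'(s,x)$ remains normalized on the $r_1$-tube (hence on the smaller $r_0$-tube), giving property (4). For $x \in S^{n-1}$, $\widetilde G_0(x)$ is already of product-symmetric form, so $\sigma$ fixes it and $G'(s,x) \equiv \widetilde G_0(x)$ remains product-symmetric. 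At $s = 1$, the metric on the inner tube $\{r \leq r_0\}$ equals $\sigma(\widetilde G_0(x)) = g_N + f_{\widetilde G_0(x)}(r)^2\, d\xi^2$, of the required form, establishing property (5).

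The principal technical obstacle is maintaining positive scalar curvature throughout the $s$-homotopy. The remedy is to shrink $r_1$ small enough that, uniformly in $x \in D^n$, the deviation $h_{g,r} - h_{\sigma(g),r}$ becomes arbitrarily $C^2$-small on the $r_1$-tube. This relies on two inputs: compactness of $D^n$ yields a uniform lower bound $\scal(\widetilde G_0(x)) \geq 2\varepsilon > 0$, and smoothness of $g$ across the zero section $N$ forces $h_{g,r}$ to have the $O(k)$-symmetric leading-order expansion $g_N + r^2\, d\xi^2 + O(r^3)$, so the deviation from its symmetrization vanishes uniformly as $r_1 \to 0$. Openness of $\Riem^+(M) \subset \Riem(M)$ then guarantees that every intermediate $G'(s,x)$ lies in $\Riem^+(M)$, completing the construction.
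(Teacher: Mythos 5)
Your construction is genuinely different from the paper's, and your first two paragraphs do establish the Lemma. The paper proceeds more economically: it fixes a single background metric $g$ with $g|_{N\times B_R^k} = g_N + g'$ for some normalized rotationally symmetric $g'$, takes the naive convex combination $\tilde G(s,x) := (1-s)G_0(x) + sg$, and then applies Proposition~\ref{prop:adapting-tubularneighborhoods} once to the full two-parameter family over $K = [0,1]\times D^n$ relative to $L = ([0,1]\times S^{n-1})\cup(\{1\}\times D^n)$, setting $G'(s,x) := F(1,s,x)^*\tilde G(s,x)$. Convexity makes property (5) automatic on $L$, and a single normalization step supplies (4) for all $(s,x)$. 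Your route normalizes $G_0$ alone and then builds the $s$-deformation by an explicit $O(k)$-averaging and cut-off; that works too, but asks you to verify by hand that $\sigma$ fixes the metrics over $S^{n-1}$ and that the cut-off combination extends smoothly across the zero section, whereas the paper gets these for free from the convex interpolation.

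Your last paragraph, however, is both superfluous and incorrect. Superfluous, because Lemma~\ref{lem:second-normalizationlemma} does \emph{not} require $G'(s,x)$ to have positive scalar curvature for $s>0$: only property (1), at $s=0$, involves positivity. The paper remarks explicitly that its interpolation is ``a crude interpolation \dots\ without taking the psc condition into account'' --- indeed $(1-s)G_0(x)+sg$ does not stay positive --- and the $s$-slices serve only to guide the subsequent Gromov--Lawson deformation. Incorrect, because your symmetrization $\sigma$ replaces the restriction of the metric to the core $N\times\{0\}$ by the fixed $g_N$, whereas the actual restriction is $G_0(x)|_N$, which need not equal $g_N$ for $x$ in the interior of $D^n$. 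So the deviation $h_{g,r} - h_{\sigma(g),r}$ is $O(1)$ in $C^0$ as $r\to 0$, not $O(r^3)$, no shrinking of $r_1$ makes the perturbation $C^2$-small, and the openness argument cannot close. Simply delete that paragraph; the rest of your argument stands.
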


In short, we make the metrics $G(0,x)$ normalized on some tube, but in addition, we also take a crude interpolation of $G(0,x)$ to some rotationally invariant metric, without taking the psc condition into account.

\begin{proof}
Choose a Riemannian metric $g$ on $M$ such that $g|_{N \times B_R^k} = g_N + g'$, where $g'$ is a a rotationally symmetric normalized metric on $B_R^k$. For example, we can take $g'$ to be the euclidean metric. Let $\tilde{G}(s,x):= (1-s) G_0(x) + sg$, for $(s,x) \in [0,1] \times D^n$. We apply Proposition \ref{prop:adapting-tubularneighborhoods} to the map $\tilde{G}$ with $K= [0,1] \times D^n$ and $L= 0 \times S^{n-1} \cup 1 \times D^n$ and let $F$ be the isotopy from that Proposition. Put $G'(s,x):= F(1,s,x)^* \tilde{G}(s,x)$. This has all the desired properties.
\end{proof}

Now we replace the map $G_0$ in \eqref{proof-firstpart-diag1} by the map $G'(0,\_)$. 

For $x \in S^{n-1}$, we write $G'(0,x)= g_N + g_0(x)$ on $N \times B_R^k$. Let
\[
A:= \inf \scal (g_N) \in \bR.
\]
Choose $\eta>0$ so that 
\[
\forall x\in D^n: \inf \scal (G'(0,x))- 2\eta \geq 0
\]
and 
\[
\forall x \in S^{n-1}: \inf \scal (G'(0,x)|_{N \times B_R^k}) -2\eta \geq A. 
\]
The second condition is implied by the first one if $A \leq 0$. If $A>0$, then for each point $g \in \Riem^+_{rot} (M)$ which is of the form $g_N + g_0$ on $N \times B_R^k$, we have $\scal (g|_{N \times B_R^k}) > A$, since otherwise $g_0$ won't be a psc metric. 

Therefore, if we can produce a homotopy $G:[0,1] \times D^n \to \Riem^+ (M)$, so that 
\begin{enumerate}
\item $G(0,\_)= G'(0,\_)$, 
\item $\inf \scal (G(\lambda,x)) \geq \inf \scal (G'(0,x)) - \eta$ and
\item $G(\lambda,x)|_{N \times B_R^k}$ is of the form $g_N+ g_0(\lambda,x)$ for $(\lambda,x)\in ( [0,1] \times S^{n-1}) \cup (\{1\} \times D^n)$, with some rotationally invariant normalized metric $g_0 (\lambda,x)$,
\end{enumerate}
then $g_0 (\lambda,x)$ will have positive scalar curvature for all $(\lambda,x)\in [0,1] \times S^{n-1}$, and $G$ is a relative homotopy, and so we have finished the proof of Proposition \ref{prop:from-all-to-rotationallysymmetric}.

Next, we determine the parameters $\epsilon_0$ and $\ell$ for the Gromov--Lawson curve. 

\begin{itemize}
\item Let $\epsilon_1>0$ be small enough, so that for all $\epsilon \in (0,\epsilon_1)$ and for all $(s,x)$, the restriction of $G'(s,x)$ to the sphere $N \times S^{k-1}_\epsilon$ has scalar curvature $> \max (0,A)$. This is possible by Corollary \ref{cor:curvature-on-little-spehres}.
\end{itemize}


\begin{lem}\label{lem:find-small-espilon}
For all $B \in \bR$, there exists $\epsilon_0\in (0,\epsilon_1]$ such that for all $(s,x) \in [0,1] \times D^n$, the metric $(G'(s,x))_{\gamma}$ has scalar curvature at least $B$, where $\gamma$ is a curve in the plane with the following properties. 
\begin{enumerate}
\item There is $\epsilon \in (0,\epsilon_0]$ such that $0 \leq r \leq \epsilon$, $-\frac1\epsilon\leq\kappa \leq 0$ and $\theta \in [0,\frac{\pi}{2}]$, \item if $r \leq \frac{\epsilon}{\sqrt{2}}$, then $\gamma$ is a circle of radius $\epsilon$, and if $r \geq \frac{\epsilon}{\sqrt{2}}$, then $\theta \geq \frac{\pi}{4}$. 
\end{enumerate}
\end{lem}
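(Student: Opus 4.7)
The plan is to apply Lemma \ref{lem:formula} to the compact family $K := G'([0,1]\times D^n) \subset \Riem(M)$. By Lemma \ref{lem:second-normalizationlemma}, every $G'(s,x) \in K$ is normalized on the $r_0$-tube around $N$, so the lemma supplies constants $r_1 \in (0, r_0]$ and $C > 0$ such that its curvature estimate holds for every $g \in K$ and every immersed curve with $0 < r \leq r_1$. Let
\[
A_0 := \inf_{(s,x)} \inf_M \scal(G'(s,x)) \in \bR,
\]
finite by compactness. The task reduces to bounding the excess term from Lemma \ref{lem:formula} below by $B - A_0$ uniformly in $(s,x)$ and $\gamma$.

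I would then take $\epsilon_0 \leq \min\{\epsilon_1, r_1, 2(k-1)/C\}$. Since $\kappa \leq 0$ and $r \leq \epsilon_0 \leq 2(k-1)/C$, the first bracket in the formula of Lemma \ref{lem:formula} simplifies to
\[
|\kappa|\sin(\theta)\left(\frac{2(k-1)}{r} - C\right) \geq 0,
\]
and can be dropped from any lower bound. Now I would split according to condition (2). In the inner region $r \leq \epsilon/\sqrt{2}$, the curve $\gamma$ lies on a circle of radius $\epsilon$ meeting the $r$-axis perpendicularly, so $\kappa = -1/\epsilon$ and $r = \epsilon\sin(\theta)$. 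Substituting these relations into Lemma \ref{lem:formula} collapses the positive contributions to the round-sphere value $k(k-1)/\epsilon^2$, while the two $C$-error terms together give $-2Cr/\epsilon^2 \geq -\sqrt{2}\,C/\epsilon$; hence
\[
\scal((G'(s,x))_\gamma) \geq A_0 + \frac{k(k-1)}{\epsilon^2} - \frac{\sqrt{2}\,C}{\epsilon}.
\]

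In the outer region $\epsilon/\sqrt{2} \leq r \leq \epsilon$, condition (2) gives $\sin^2(\theta) \geq 1/2$; discarding the non-negative $\kappa$-bracket and using $r \leq \epsilon$ together with $1/r \leq \sqrt{2}/\epsilon$ yields
\[
\scal((G'(s,x))_\gamma) \geq A_0 + \frac{(k-1)(k-2)}{2\epsilon^2} - \frac{\sqrt{2}\,C}{\epsilon}.
\]
Both right-hand sides have a strictly positive $1/\epsilon^2$ coefficient (this is where $k \geq 3$ enters) that dominates the $1/\epsilon$ correction as $\epsilon \to 0$, so a final shrinking of $\epsilon_0$ makes both lower bounds exceed $B$ for all admissible $\epsilon$, $(s,x)$, and $\gamma$. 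The main obstacle is simply the arithmetic bookkeeping of constants across the two regions; the essential geometric content is that a tiny circular bend of radius $\epsilon$ contributes scalar curvature of order $1/\epsilon^2$, swamping any prescribed bound $B$.
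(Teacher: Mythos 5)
Your proposal is correct and takes essentially the same approach as the paper: apply Lemma \ref{lem:formula} over the compact family, drop the nonnegative $\kappa$-bracket (using $\kappa\le 0$ and $\epsilon_0$ small), and split into the circular inner region (giving the round-sphere term $k(k-1)/\epsilon^2$) and the outer region where $\sin^2\theta\ge\frac12$ (giving $(k-1)(k-2)/(2\epsilon^2)$), with the $1/\epsilon^2$ terms dominating the $C/\epsilon$ errors as $\epsilon\to 0$. You are in fact a bit more explicit than the paper, which states the error term as $-2C/\epsilon$ rather than your sharper $-\sqrt{2}C/\epsilon$ and does not spell out why the $\kappa$-bracket may be dropped; both bounds suffice.
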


\begin{proof}
This is an application of Lemma \ref{lem:formula}, but much easier than Proposition \ref{prop:glcurve}. Pick $r_1$ so that the curvature estimate of Lemma \ref{lem:formula} holds for all $G(s,x)$, with some constant $C$. At the points where $\gamma$ is a circle, we have $\kappa=-\frac{1}{\epsilon}$ and $\sin (\theta)\epsilon =r$. There, Lemma \ref{lem:formula} yields
\[
\scal (G'(s,x)_\gamma) \geq \scal (G'(s,x)) + \frac{1}{\epsilon^2} k (k-1) - \frac{2C}{\epsilon}.
\]
In the region $\frac{\epsilon}{\sqrt{2}} \leq r \leq \epsilon$, Lemma \ref{lem:formula} yields
\[
\scal (G'(s,x)_\gamma) \geq \scal (G'(s,x)) - \frac{2 C}{\epsilon} + \frac{(k-1)(k-2)}{2 \epsilon^2}.\qedhere
\]
\end{proof}

\begin{itemize}
\item Now we choose $\epsilon_0>0$ so that $\epsilon_0 < r_0$ and that the conclusion of Lemma \ref{lem:find-small-espilon} holds with $B=A + 2\eta$. According to Proposition \ref{prop:glcurve}, there exists a Gromov--Lawson curve $\Gamma$ with parameters $\eta$ and $\epsilon_0$, which has an inner width $r_\infty \leq \epsilon_0$. 
\end{itemize}

Let $g_{s,x}:= G'(s,x)|_{N \times S^{k-1}_{r_\infty}}$. This metric on $N \times S^{k-1}_{r_\infty}$ has scalar curvature at least $A+2\eta$ by Lemma \ref{lem:find-small-espilon}. We get a continuous map $[0,1]\times D^n \to \Riem^+ (N \times S^{k-1}_{r_\infty})$, $(s,x) \mapsto g_{s,x}$.

Next let $f: \bR \to [0,1]$ be a smooth function such that $f\equiv 0$ near $(-\infty,0]$ and $f\equiv 1$ near $[1,\infty)$ and define $b:= \max\{ \norm{f'}_{C^0},\norm{f''}_{C^0}\}$. 

For each $L>0$, we get an induced map 
\begin{equation}\label{endgame-homotopy}
D^n \times [0,1] \to \Riem (N \times S^{k-1}_{r_\infty} \times [0,L]); \; (x,\lambda)\mapsto g_{\lambda f(\frac{t}{L}),x} + dt^2. 
\end{equation}
The first two derivatives of $t \mapsto \lambda f(\frac{t}{L})$ are
\[
|\lambda f(\frac{t}{L})'| \leq  \frac{\lambda}{L}b; \; |\lambda f(\frac{t}{L})''| \leq  \frac{\lambda}{L^2}b.
\]
\begin{itemize}
\item We pick $L$ so large that $\frac{\lambda}{L}b, \frac{\lambda}{L^2}b \leq \Lambda$, where $\Lambda>0$ is the constant provided by Lemma \ref{gajer-lemma}. With these choices, we obtain
\begin{equation}\label{endgame-homotopy2}
\scal (g_{\lambda f(\frac{t}{L}),x} + dt^2) \geq A+\eta.
\end{equation}
\item Finally, put $\ell:=L+R$.
\end{itemize}

\begin{proof}[End of the proof of Proposition \ref{prop:from-all-to-rotationallysymmetric}]
We consider a diagram as in \eqref{proof-firstpart-diag1} and replace $G_0$ by $G'(0,\_)$, where $G'(s,x)$ is a family of Riemannian metrics with the properties stated in Lemma \ref{lem:second-normalizationlemma}. Let $\eps_0$ be as in Lemma \ref{lem:find-small-espilon}. 
By Proposition \ref{prop:glcurve}, there exists a Gromov--Lawson curve $\Gamma$ with parameters $\eps_0$ and $\ell$. Let $E_\Gamma: [0,1]\times M \to (M \times \{0\} )\cup (N \times \bR^k \times \bR)$ be the isotopy of embeddings determined by $\Gamma$ (as in \eqref{eq:gromovlawsonembedding}). Now we define $G(\lambda,x) \in \Riem^+ (M)$ for $\lambda \in [0,\frac12]$ by
\[
 G(\lambda,x):= E_{\Gamma ,2 \lambda}^* (G'(0,x)+dy^2)
\]
and for $\lambda \in [\frac12,1]$ by
\[
 G(\lambda,x):= E_{\Gamma ,1}^* (G'((2\lambda-1) f(\frac{y-y_5}{\ell}),x)+dy^2).
\]
By construction, $\scal(G(\lambda,x)) >0$ for all $x,\lambda$, and if $x \in S^{n-1}$, then $\scal(G(\lambda,x)) >A$. These metrics are not normalized, but the curve $t \mapsto (p,tv)$ is a variable speed geodesic. This can be rectified by a reparametrization (pull back by an isotopy of $N \times \bR^k$ which is the identity outside a compact set and which is of the form $(p,v) \mapsto (p, h_\lambda (\norm{v}) v)$ for a smooth odd function $h_\lambda$). After such a reparametrization, the metrics $G(\lambda,x)$ are normalized on the $r_0$-tube. If $x \in S^{n-1}$, they stay rotationally symmetric, and the geometric size of the region where they are does not decrease with $\lambda$. Hence after reparametrization, $G(\lambda,x)$ is rotationally symmetric and normalized on the $R$-tube, for all $x \in S^{n-1}$. This completes the proof. 

\end{proof}

\section{Rotationally symmetric metrics}\label{sec:rot}

In this section, we complete the proof of Theorem \ref{thm:main-part1-precise}. Let us first recall some notation. Let $A:= \inf\scal (g_N)\in \bR$. We choose $\delta>0$ so that $\frac{1}{\delta^2} (k-1)(k-2)+A>0$ and pick a torpedo metric $g_\torp^k$ on $\bR^k$ of radius $\delta$, which is cylindrical outside the disc of radius $R$, for some $R>0$. 
Recall that $ \Riem^+_{rot}(M) \subset \Riem^+ (M)$ is the space of all psc metrics $g$ on $M$ such that
\[
g|_{N \times B_R^k} = g_N + g_0
\]
for some rotationally symmetric normalized psc metric $g_0$ on $B_R^k$. Furthermore, $\Riem^+ (M,\varphi) \subset \Riem^+_{rot}(M)$ is the subspace of those $g$ such that $g_0=g_\torp^k$. The goal is to prove the following result, which together with Proposition \ref{prop:from-all-to-rotationallysymmetric} completes the proof of Theorem \ref{thm:main-part1-precise}. 
\begin{prop}\label{prop:main2}
The inclusion map
\[
\Riem^+(M,\varphi) \to  \Riem^+_{rot}(M) 
\]
is a weak homotopy equivalence. 
\end{prop}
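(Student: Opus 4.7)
The plan is to apply Proposition~\ref{prop:heq}. Given a map of pairs $G_0 : (D^n, S^{n-1}) \to (\Riem^+_{rot}(M), \Riem^+(M, \varphi))$, we construct a relative homotopy into $\Riem^+(M, \varphi)$. By Lemma~\ref{lem:preparation}, we write $G_0(x)|_{N \times B_R^k} = g_N + dt^2 + f_x(t)^2 d\xi^2$ for a continuously varying family of warping functions $\{f_x\}_{x \in D^n}$, with $f_x = h_\delta$ for $x \in S^{n-1}$. Setting $A := \inf \scal(g_N)$, the psc hypothesis gives $\sigma(f_x) > -A$ pointwise, and by compactness of $D^n$ we may fix a uniform $\eta > 0$ with $\sigma(f_x) \geq -A + 2\eta$. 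The problem reduces to producing a homotopy $(x, \lambda) \mapsto f_{x, \lambda}$ from $f_x$ to $h_\delta$ with $\sigma(f_{x, \lambda}) > -A$ throughout and $f_{x, \lambda} \equiv h_\delta$ for $x \in S^{n-1}$, together with a compatible homotopy of the metric on a collar of $N \times \partial B_R^k$ that absorbs the changing boundary data.

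The warping function homotopy is built in two stages. First, by a Gromov--Lawson-style bending of the profile curve $t \mapsto (t, f_x(t))$ in the upper half-plane, directly analogous to the construction of \S\ref{sec:glcurves} and controlled by the curvature formula of Lemma~\ref{lem:formula}, we parametrically deform each $f_x$ to a torpedo profile $h_{\delta(x)}$ of some radius $\delta(x) > 0$ depending continuously on $x$; the bending is trivial for $x \in S^{n-1}$ since $f_x = h_\delta$ already. Secondly, we interpolate $\delta(x) \leadsto \delta$ through the canonical one-parameter family $\{h_s\}_{s > 0}$. Since $\sigma(h_s) \geq \tfrac{(k-1)(k-2)}{s^2} > 0 > -A$ for all $s$ in a compact range, this second stage preserves the scalar curvature bound automatically.

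The compatible homotopy of the metric outside a slightly smaller disk $N \times B_{R'}^k$ is handled by a standard collar construction: in a collar of $N \times \partial B_{R'}^k$ we interpolate between the modified interior piece $g_N + dt^2 + f_{x,\lambda}(t)^2 d\xi^2$ (whose boundary value varies with $\lambda$) and the unchanged exterior metric $G_0(x)$. Making the collar long enough and the $\lambda$-variation slow enough, Lemma~\ref{gajer-lemma} guarantees the resulting metric is psc throughout.

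The main obstacle is the first stage of the warping function homotopy: engineering a parametric Gromov--Lawson bending to terminate at a specific torpedo profile. The technical toolkit (Lemma~\ref{lem:formula}, and the convolution smoothing used in the proof of Proposition~\ref{prop:glcurve}) is the same as in \S\ref{sec:glcurves}, but the construction must now yield a curve of a prescribed shape, namely that of a torpedo, rather than merely a curve with a long cylindrical neck. This forces a more delicate design of the deformation than in Proposition~\ref{prop:glcurve}, and verifying that the bending can be carried out continuously in $x$ while matching the trivial deformation over $S^{n-1}$ is the crux of the argument.
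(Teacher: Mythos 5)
Your strategy diverges substantially from the paper's, and the divergence introduces a genuine gap precisely at the step you flag as ``the crux.'' The Gromov--Lawson bend of \S\ref{sec:glcurves}, when applied to a metric that is already of the form $g_N + dt^2 + f_x(t)^2 d\xi^2$, changes the warping function only by \emph{precomposition}: the new warping function is $f_x \circ r$, where $r(s)$ is the height coordinate of the bent curve in arc-length. Near $s=0$ the curve meets the $y$-axis perpendicularly along a circle, so $r(s)=\rho\sin(s/\rho)$; thus the ``cap'' of the deformed metric is $f_x(\rho\sin(s/\rho))$, which is the original cap reparametrized, not a torpedo $\delta'\sin(s/\delta')$. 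A GL bend reliably manufactures a long cylindrical \emph{neck} but cannot normalize the shape of the cap, and Lemma~\ref{lem:formula} tells you only when a bend is admissible, not how to force the output onto the one-parameter family $\{h_s\}$. You correctly identify that this step requires a more delicate design, but no construction is offered, and the mechanism you point to cannot supply one. Your second stage also has a small but real flaw: the chain $\sigma(h_s) > 0 > -A$ presumes $A>0$, whereas Theorem~\ref{thm:main} explicitly permits $\scal(g_N)\le 0$, so even the radius interpolation needs extra input when $A\le 0$.

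The paper's route replaces both of your stages with different ideas. Rather than bending to a literal torpedo, Proposition~\ref{prop:flatten} \emph{flattens} the warping function: using the sloping and bending compositions of Lemmas~\ref{lem:slopingcurve}--\ref{lem:bendingcurve} inside the $\Lambda(g,h,S)$ formalism of \S\ref{subsec:introduction-of-collars}, and the curvature estimate Lemma~\ref{lem:easyestimate} (not Lemma~\ref{lem:formula}), it deforms $f_x$ so that $0\le f_x\le\delta$, $f_x''\le0$, $0\le f_x'\le1$, and $f_x'\equiv0$ near $R$. Once this is done, two observations finish the job with no bending at all: Observation~\ref{secondobsection} says that the set of concave Lipschitz-$1$ warping functions with $\sigma>0$ is convex, so the straight-line homotopy $(1-\lambda)f_x + \lambda h_\delta$ stays admissible; and Observation~\ref{thirdobsection} (rescaling $f\mapsto f^\theta$ multiplies $\sigma$ by $\theta^{-2}$) supplies the quantitative boost needed when $A\le0$. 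The varying boundary value $f_{x,\lambda}(R)$ is then absorbed on a stretched external collar by Observation~\ref{firstobsection}, which plays the role you assign to Lemma~\ref{gajer-lemma}. The convexity observation is the key structural fact your proposal is missing; with it, no GL-style curve engineering is needed in this part of the argument.
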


\subsection{Preliminary remarks}

A rotationally symmetric normalized metric on $B_R^k$ is of the form $g=dt^2 + f(t)^2 d\xi^2$, for some \emph{warping function} $f:[0,R]\to \bR$ with the properties stated in Lemma \ref{lem:preparation}. We also recall the curvature formula 
\begin{equation}\label{eq:curvatureformula:sec3}
\sigma(f):= \scal (dt^2 + f(t)^2 d\xi^2)= (k-1)\left((k-2)\frac{1-f'^2}{f^2} - 2\frac{f''}f\right).
\end{equation}
The torpedo metric $g_\torp^k$ is given by the warping function $h_\delta$ as in \eqref{defn:warping-for-deltatorpedo}. In order for the metric $g_N + dt^2 + f(t)^2d\xi^2$ to have positive scalar curvature, we need to have $A+ (k-1)\left((k-2)\frac{1-f'^2}{f^2} - 2\frac{f''}f\right)>0$. To allow for more convenient notation when $A\leq 0$, we introduce 
\[
B:= \max\{0, -A\} \geq 0.
\]
Then 
\[
0\leq B < \frac{1}{\delta^2}(k-1)(k-2),
\]
and the condition on $f$ becomes
\[
(k-1)\left((k-2)\frac{1-f'^2}{f^2} - 2\frac{f''}f\right)>B.
\]
The most delicate step in the proof of Proposition \ref{prop:main2} is the following. 

\begin{prop}\label{prop:flatten}
Let 
\[
\xymatrix{
S^{n-1} \ar[d] \ar[r]^-{G_0} & \Riem^+(M,\varphi)\ar[d] \\
D^n \ar[r]^-{G_0} & \Riem^+_{rot} (M)
}
\]
be a commutative diagram. Then there exists a homotopy $G: [0,1] \times D^n \to \Riem^+_{rot} (M)$ of maps of space pairs $(D^n,S^{n-1}) \to (\Riem^+_{rot} (M),\Riem^+(M,\varphi))$ such that $G(0,\_)=G_0 (\_)$ and such that the warping function $f_{t,x}$ of $G(t,x)$ satisfies
\begin{enumerate}
\item $0 \leq f_{1,x} \leq \delta$ and $f''_{1,x} \leq 0$ on $[0,R]$, 
\item $f_{1,x}' \equiv 0$ near $R$.
\end{enumerate}
\end{prop}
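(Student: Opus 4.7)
My plan is, for each $x \in D^n$, to construct a continuous path $f_{\lambda, x}$ in the space of warping functions satisfying $\sigma(f) > B$, starting at $f_{0,x}$ and ending at a function satisfying conditions (1) and (2). This path should vary continuously in $x$ and reduce to the constant path at $h_\delta$ for $x \in S^{n-1}$, which is consistent since $h_\delta$ already satisfies (1) and (2). Because the target space $\Riem^+_{rot}(M)$ only constrains the rotationally symmetric form on the tube, I may freely adjust the metric on $M \setminus (N \times B_R^k)$; so the problem reduces to deforming the warping function on $[0, R]$ and then extending to a full metric on $M$ by a routine collar-style construction on the annular transition region near $N \times S^{k-1}_R$.

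\textbf{Stage 1 (match to $h_\delta$ near the core).} First, I would arrange that $f_{\lambda, x}$ coincides with $h_\delta$ on a short initial interval $[0, \eta]$. Both $f_{0, x}$ and $h_\delta$ are odd smooth with value $0$ and derivative $1$ at $0$, so their difference is of order $t^3$; a cutoff interpolation can be chosen continuously in $x$ and made trivial on $S^{n-1}$. The technical point is preservation of psc, which is delicate near $t = 0$ because the terms $(1-f'^2)/f^2$ and $f''/f$ in $\sigma(f)$ involve ratios of quantities both tending to zero. To handle this, I would first perturb $f_{0,x}$ so that its Taylor expansion at $0$ matches that of $h_\delta$, after which the cutoff interpolation introduces errors in $\sigma$ that are $o(1)$ as $\eta \to 0$; this is tolerable by the uniform lower bound $\inf_{x \in D^n}(\sigma(f_{0, x}) - B) > 0$ guaranteed by compactness of $D^n$.

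\textbf{Stage 2 (bend into torpedo shape).} Once Stage 1 is in place and the warping function agrees with $h_\delta$ on $[0, \eta]$, I would deform it on $[\eta, R]$ to a target function equal to $h_\delta$ on $[0, t_\star]$ (where $h_\delta$ first attains its plateau value $\delta$), equal to $\delta$ on $[t_\star, R - \epsilon]$, and smoothly tapering with $f' \equiv 0$ near $R$. The deformation path is to be constructed in analogy with the Gromov--Lawson curve construction of Proposition \ref{prop:glcurve}: I view the graph $(t, f(t))$ as a curve in the half-plane and bend it continuously so that it comes to lie in the strip $\{r \leq \delta\}$ and becomes concave. The scalar curvature is tracked along the way via \eqref{eq:curvatureformula:sec3}; on the final horizontal segment, $\sigma = (k-1)(k-2)/\delta^2 > B$ by the choice of $\delta$, giving a comfortable margin at the endpoint of the homotopy.

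The hard part will be maintaining $\sigma(f_{\lambda, x}) > B$ throughout Stage 2 while $f''$ transitions from possibly positive to uniformly non-positive values and while $f$ transitions from possibly exceeding $\delta$ to being bounded by $\delta$. The three terms entering $\sigma(f)$ must be coordinated so that no term drops enough to violate psc, in analogy with the delicate estimates of Proposition \ref{prop:glcurve}. Added to this is the burden of parameterization by $x \in D^n$: all quantitative choices (cutoff widths, bending radii, smoothing scales) must vary continuously in $x$ and reduce to the trivial deformation when $f_{0, x} = h_\delta$, so that the homotopy is constant over $S^{n-1}$. Compactness of $D^n$ will supply the uniform estimates that enable these continuous choices.
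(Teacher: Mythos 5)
Your plan misses the central technical device the paper relies on, and Stage~1 as stated will not go through. Let me explain both points.

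The paper never modifies the warping function by interpolating its \emph{values}; it modifies the \emph{parametrization}, replacing $f$ by $f\circ h$ where $h\colon[0,\infty)\to[0,\infty)$ is smooth, $h(0)=0$, $h'\equiv 1$ near $0$, and $0\le h'\le 1$. This is the content of Lemma~\ref{lem:deformationnull} (the $\Lambda$-construction, which also handles the domain-extension issue you wave at), and the reason it works is the curvature identity \eqref{eq:curvature-convexcombination}: when $h'\in[0,1]$, the scalar curvature of $dt^2+f(h(t))^2 d\xi^2$ is a convex combination of the old scalar curvature and $(k-1)(k-2)/f(h)^2$, minus a term $2(k-1)\tfrac{f'(h)}{f(h)}h''$ that is manifestly non-negative whenever $h''\le 0$. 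This turns the psc constraint into a pointwise estimate on $h''$ (Lemma~\ref{lem:easyestimate}), which is then met by explicit ``sloping'' and ``bending'' functions (Lemmas~\ref{lem:slopingcurve}, \ref{lem:bendingcurve}). Crucially, since $h$ is the identity near $0$, nothing is touched near the origin, and the delicate $0/0$ behaviour of $(1-f'^2)/f^2$ and $f''/f$ never has to be controlled during the deformation.

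By contrast, your Stage~1 attempts a cutoff interpolation of $f_{0,x}$ toward $h_\delta$ on a small interval $[0,\eta]$. Your claim that the resulting error in $\sigma$ is $o(1)$ as $\eta\to 0$ is not justified: a cutoff supported on $[0,\eta]$ contributes second derivatives of order $\eta^{-2}$, and since $f(t)\sim t$ near $0$, the term $f''/f$ in \eqref{eq:curvatureformula:sec3} is $O(\eta^{-3})$ there. There is no uniform control. (The paper does interpolate to $h_\delta$ near the core, but only later, in Proposition~\ref{prop:main2}, and only after Proposition~\ref{prop:flatten} has secured $0\le f'\le 1$ and $f''\le 0$; those are exactly the hypotheses of Observation~\ref{secondobsection} which make the convex combination safe. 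You are trying to use the conclusion of the proposition as a tool in its own proof.) Finally, the analogy you draw in Stage~2 to Proposition~\ref{prop:glcurve} is misleading: that proposition concerns the second fundamental form of a hypersurface $Q_\gamma\subset M\times\bR$ and the estimate of Lemma~\ref{lem:formula}, which is a different formula from the warped-product curvature \eqref{eq:curvatureformula:sec3}; the bending that is needed here is carried out purely at the level of reparametrizations $h$, not hypersurface curves. To salvage the proof you would need to discover the $f\mapsto f\circ h$ mechanism (or an equivalent) and the accompanying estimate \eqref{eq:curvature-convexcombination}.
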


We will prove Proposition \ref{prop:flatten} in \S \ref{subsec:introduction-of-collars}, and in \S \ref{subsecd:adjustingtorpedo}, we complete the proof of Proposition \ref{prop:main2}.


\subsection{Introducing collars}\label{subsec:introduction-of-collars}

In order to prove Proposition \ref{prop:flatten}, we will change the warping function $f$ by composition with another function $h$ or a $1$-parameter family thereof. The composition $f \circ h$ will have a different domain of definition. In order to obtain a well-defined family of Riemannian metrics on $M$, we introduce the following construction.

We fix, once and for all, diffeomorphisms $\varphi_{a,b}$ of $(0,\infty)$ for each $0<a\le b$ such that
\begin{itemize}
\item $\varphi_{a,b}(b) = a$,
\item $\varphi_{a,a} = \id$,
\item $\varphi_{a,b}|_{[0,\frac a2]\cup[2b,\infty)}=\id$,
\item $\varphi_{a,b}'\equiv 1$ near $b$,
\item $\varphi_{a,b}$ depends smoothly on $a,b$.
\end{itemize}
The formula $\phi_{a,b}(x,v) := (x,\frac{\varphi(\norm{v})}{\norm{v}}v)$ defines diffeomorphisms of $N\times\bR^k$. These are compactly supported and can be extended by the identity to $M$. 

\begin{lem}\label{lem:deformationnull}
Let $g \in \Riem_{rot}^+ (M)$ given by $g=g_N+dt^2 + f(t)^2d\xi^2$ on $N\times B_R^k$. Let $h\colon [0,\infty) \to [0,\infty)$ be smooth with $h(0)=0$, $h'\equiv1$ near $0$ and $0\le h'\leq 1$. Let $S\in(0,\infty)$ be such that $h(S)=R$ (this enforces $S \geq R$). Then the formula
$$
\Lambda(g,h,S) := \begin{cases}
	g_N + dt^2 + f(h(t))^2d\xi^2 &\text{ on } N\times B_S^k,\\
	{\phi_{R,S}}^*g	&\text{ else, }
\end{cases}
$$ 
defines a smooth Riemannian metric on $M$ in each of the following cases:
\begin{enumerate}
 \item $f'\equiv 0$ near $R$, or
 \item $h'\equiv 1$ near $S$.
\end{enumerate}
\end{lem}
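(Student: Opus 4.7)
The plan is to verify that $\Lambda(g,h,S)$ is a smooth Riemannian metric on $M$ by checking each candidate transition locus in turn. On the open sets $N \times (B_S^k \setminus \{0\})$ and $M \setminus (N \times \overline{B_S^k})$ smoothness is manifest: the former is given by a concrete polar-coordinate formula, and the latter is the pullback of the smooth metric $g$ under a diffeomorphism. Hence only two loci need real checking: the sphere $\{t = S\}$, where the two pieces of the definition meet, and the origin $\{t = 0\}$, where the rotationally symmetric expression could fail to extend smoothly through the centre of the fibre disc.

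For the sphere I would first observe that in polar coordinates $(x, t, v) \in N \times (0,\infty) \times S^{k-1}$ near $\{t = S\}$, the hypotheses $\varphi_{R,S}(S) = R$ and $\varphi_{R,S}'\equiv 1$ near $S$ yield $\varphi_{R,S}(t) = R + (t-S)$ on a neighborhood of $S$. Thus $\phi_{R,S}$ is there simply the radial translation $(x,t,v) \mapsto (x, R+(t-S), v)$, so $\phi_{R,S}^* g$ near $t = S$ coincides with $g$ evaluated at $s = R + (t-S)$, with $ds$ rewritten as $dt$. Because $g$ is smooth at $s = R$ and equals $g_N + ds^2 + f(s)^2 d\xi^2$ on $s \leq R$, its Taylor expansion at $s = R$ (from either side) is entirely encoded by the one-sided jet of $f$ at $R$. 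Thus smoothness of $\Lambda$ across $\{t = S\}$ reduces to showing that the inside warping function $t \mapsto f(h(t))$ and the outside warping function $t \mapsto f(R+(t-S))$ (read via any smooth extension of $f$ past $R$) agree to infinite order at $t = S$.

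At this point the two cases of the lemma handle the matching differently. Under hypothesis (1), $f$ is locally constant equal to $f(R)$ near $R$, and since $h(S) = R$ both warping expressions become identically $f(R)$ on a neighborhood of $t = S$. Under hypothesis (2), $h'\equiv 1$ near $S$ combined with $h(S) = R$ forces $h(t) = R + (t-S)$ near $S$, so the two warping expressions literally coincide there. Smoothness at the origin is an easier check: by Lemma \ref{lem:preparation}(3) it suffices that $f \circ h$ be the restriction of a smooth odd function with derivative $1$ at $0$, and this property is inherited from $f$ because $h(0) = 0$ together with $h'\equiv 1$ near $0$ forces $h = \id$ on a neighborhood of $0$. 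I expect the main delicate point to be bookkeeping the radial coordinate change cleanly across the sphere, and seeing why each of the two hypotheses alone suffices: the cases are dual in the sense that, in the composition $f \circ (\text{radial function})$, either the outer factor $f$ or the inner factor must be ``affinely trivial'' at the interface in order for the Taylor series to match.
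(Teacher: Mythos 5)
Your proof is correct and follows essentially the same route as the paper: reduce to matching the warping expression $f\circ h$ against the pulled-back metric at the transition sphere $\{t=S\}$, observe that $\varphi_{R,S}$ is the affine shift $t\mapsto R+(t-S)$ near $S$, and then do the two-case analysis (either $f$ locally constant near $R$, or $h$ the affine shift near $S$). The only cosmetic difference is that you phrase the interface condition as a Taylor-jet match at $t=S$, whereas the paper shows the two formulas \emph{literally coincide} on an annulus $S-\epsilon<t<S$ (which your case analysis in fact also proves, so the jet framing is a harmless detour); your additional explicit check of smoothness at the origin via Lemma \ref{lem:preparation}(3) is correct and is left implicit in the paper.
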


\begin{proof}
We need to show that $g_N+dt^2 + f(h(t))^2d\xi^2$ and ${\phi_{R,S}}^*g$ coincide on $N\times B_S\setminus N\times B_{S-\epsilon}$ for some $\epsilon>0$. But near $N \times S^{k-1}_S$, we have 
\[
 {\phi_{R,S}}^*g = g_N + \varphi_{R,S}'(t)^2dt^2 + f(\varphi_{R,S}(t))^2d\xi^2\overset{\text{near }S}= g_N + dt^2 + f(t-S+R)d\xi^2
 \]
because ${\varphi_{R,S}}'\equiv 1$ and $\varphi_{R,S}(t)=t-S+R$ near $S$. Now in either of the two cases we have $f(t-S+R) = f(h(t))$ for $t$ near $S$: If $f'=0$ near $R$, we have $f(t-S+R) = f(R)$ and $h(t)$ is close to $R$ near $S$; and if $h'=1$ near $S$, then $h(t)=t-S+R$ near $S$. 
\end{proof}

Let us record some further simple properties of this construction. We omit the easy proof.

\begin{lem}\label{lem:deformation}\mbox{}
\begin{enumerate}
\item In the situation of Lemma \ref{lem:deformationnull}, $\Lambda(g,h,S)$ is rotationally symmetric and normalized on $B_S^k \supset B_R^k$. 
\item Let $X$ be a space and let $g\colon X\to \Riem_{rot}(M)$, $h\colon X\to C^\infty([0,\infty),\bR)$ and $S\colon X\to (0,\infty)$ be continuous maps such that $h(x)$ and $S(x)$ satisfy the requirements of Lemma \ref{lem:deformationnull} and assume that for each $x \in X$, one of the two conditions from Lemma \ref{lem:deformationnull} is satisfied. Then $X\to \Riem_{rot}(M), x\mapsto\Lambda(g(x),h(x),S(x))$ is continuous.
\end{enumerate}
\end{lem}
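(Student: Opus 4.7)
The plan is to verify the two assertions by unwinding definitions, since both are essentially bookkeeping on top of the smoothness statement already established in Lemma \ref{lem:deformationnull}.

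For part (1), I would first note that on $N \times B_S^k$ the metric $\Lambda(g,h,S)$ reads, by definition, $g_N + dt^2 + \tilde f(t)^2 d\xi^2$ with $\tilde f := f \circ h$. This is manifestly of the form ``product of $g_N$ with a rotationally symmetric metric on $B_S^k$ whose $dt^2$-coefficient is $1$,'' so by Lemma \ref{lem:preparation}(2) the rays $t \mapsto tv$ are unit speed geodesics, i.e. the metric is normalized on $B_S^k$. The only thing to check is that $\tilde f$ qualifies as a warping function in the sense of Lemma \ref{lem:preparation}(3), so that the formula defines a smooth metric across the origin. But the assumptions $h(0)=0$ and $h' \equiv 1$ near $0$ force $h(t) = t$ on a neighbourhood of $0$, hence $\tilde f = f$ there, and $f$ is already the restriction of an odd smooth function with $f'(0) = 1$.

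For part (2), continuity is a local statement in the $C^\infty$-topology, so it suffices to cover $M$ by two open sets on which $\Lambda$ is given by each of the two formulas of Lemma \ref{lem:deformationnull}, noting that on the overlap they coincide (this was the content of the proof of Lemma \ref{lem:deformationnull}). On the inner piece, $(x) \mapsto g_N + dt^2 + (f_x \circ h(x))^2 d\xi^2$ is continuous because the warping function $f_x$ of $g(x)|_{N \times B_R^k}$ depends continuously on $g(x)$ in the $C^\infty$-topology, composition with $h(x)$ is continuous in the $C^\infty$-topology, and squaring plus multiplying by $d\xi^2$ is continuous. On the outer piece, the metric is $\phi_{R, S(x)}^* g(x)$; since the family $\varphi_{a,b}$ was chosen from the outset to depend smoothly on $(a,b)$, the induced diffeomorphism $\phi_{R, S(x)}$ of $M$ depends continuously on $S(x)$, and therefore its pullback of $g(x)$ depends continuously on the pair $(g(x), S(x))$. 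The only mild subtlety is that the two open sets in the cover depend on $x$ through $S(x)$, but since $S$ is continuous and the construction is compactly supported on $M$, one can work on a fixed compact exhaustion of $M$ and patch; this causes no trouble.
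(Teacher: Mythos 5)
Your argument is correct. The paper omits the proof of this lemma entirely (``We omit the easy proof''), and what you write is the intended verification: part (1) reduces to observing that $h=\id$ near $0$, so that $f\circ h$ is again a warping function in the sense of Lemma \ref{lem:preparation} and the metric is smooth and normalized across the origin, while part (2) reduces to joint continuity of composition in the $C^\infty$-topology and of pullback along the smooth family $\phi_{R,S}$; the only phrasing to tighten is your last remark in (2), where no compact exhaustion is needed ($M$ is compact) --- one simply localizes in $X$ near a fixed $x_0$ and notes that the two open sets of the cover can be chosen uniformly for all nearby $x$ since $S$ is continuous.
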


From now on, we only change the warping function inside the $R$-disc. Note that the metric $\Lambda (g,h,S)$, restricted to the complement of $N \times B_S^k$, is isometric to the metric $g$. Hence we only need to control the scalar curvature of $\Lambda(g,h,S)$ inside $B_S^k$, where it is determined by \eqref{eq:curvatureformula:sec3}. In particular, our consideration will only involve the metrics on $B_S^k$, not on $N$. 
Let us make a few observations: If $\scal (dt^2 + f(t)^2 d\xi^2)\geq B'>0$, then L'H\^{o}pital's rule shows that
\[
 B' \leq  \lim_{t\to 0}(k-1)\left((k-2)\frac{1-f'(t)^2}{f(t)^2}-2\frac{f''(t)}{f(t)}\right) = (k-1)\lim_{t\to 0}\left((k-2)\frac{-2f'(t)f''(t)}{2f(t)f'(t)} - 2\frac{f''(t)}{f(t)}\right) =
\]
\[
 = -(k-1)k\lim_{t\to 0}\frac{f''(t)}{f(t)} = -k(k-1)\lim_{t\to 0}\frac{f'''(t)}{f'(t)} = -k(k-1)f'''(0)
\]
and hence
\begin{equation}\label{equ:thirdderivative}
f'''(0)\le\frac{-B'}{k(k-1)} .
\end{equation}
If $h$ is a function as in Lemma \ref{lem:deformationnull}, then the scalar curvature of $dt^2 + (f\circ h)^2d\xi^2$ is given by
\[
(k-1)\left((k-2)\frac{1-f'(h)^2h'^2}{f(h)^2} - 2\frac{f''(h)}{f(h)}h'^2 - 2\frac{f'(h)}{f(h)}h''\right)= 
\]
\begin{equation}\label{eq:curvature-convexcombination}
h'^2(k-1)\left((k-2)\frac{1-f'(h)^2}{f(h)^2} - 2 \frac{f''(h)}{f(h)}\right) + (1-h'^2)\frac{(k-1)(k-2)}{f(h)^2} - 2(k-1)\frac{f'(h)}{f(h)}h'', 
\end{equation}
using the self-explanatory notation $f(h):= f \circ h$.
\begin{lem}\label{lem:easyestimate}
Let $f$ be the warping function of a metric $g$ satisfying $f'\in[0,1]$ and $\scal(g)\ge B''>B'>0$ and let $h$ be as above. Assume also that for some $r>0$
\begin{enumerate}
\item $B''f^2\le (k-1)(k-2)$ on $[0,r]$ and 
\item $h''\le\frac12\frac{B''-B'}{k-1}f$ whenever $h\le r$, say $h([0,s])\subset[0,r]$.
\end{enumerate}
Then $\scal(dt^2 + f(h(t))^2d\xi^2)\ge B'$ on $[0,s]$.
\end{lem}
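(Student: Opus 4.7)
The plan is to apply the identity \eqref{eq:curvature-convexcombination}, which writes the scalar curvature of $dt^2+f(h(t))^2 d\xi^2$ as a convex combination of two geometrically meaningful quantities plus a correction involving $h''$. I will then estimate each of the three summands separately, using each hypothesis exactly once.

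First, the coefficient $h'^2$ term equals $h'^2 \sigma(f)(h(t))$, which by assumption is at least $h'^2 B''$. Second, by hypothesis (1), on $[0,s]$ we have $\frac{(k-1)(k-2)}{f(h(t))^2} \geq B''$, so the middle summand is at least $(1-h'^2) B''$ (here I use also that $0 \leq h' \leq 1$, so the weight $1-h'^2$ is nonnegative). Adding these two contributions gives
\[
\scal(dt^2 + f(h(t))^2 d\xi^2) \geq B'' - 2(k-1)\frac{f'(h)}{f(h)}h''.
\]

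It remains to absorb the last term into the gap $B''-B'>0$. If $h''(t) \leq 0$, the term $-2(k-1)\tfrac{f'(h)}{f(h)} h''$ is nonnegative (using $f' \geq 0$ and $f > 0$), and we are done. If $h''(t) > 0$, hypothesis (2) gives $h''(t) \leq \tfrac{1}{2}\tfrac{B''-B'}{k-1} f(h(t))$, and combining with $f'(h) \leq 1$ yields
\[
2(k-1)\frac{f'(h)}{f(h)}h'' \;\leq\; 2(k-1) \cdot \frac{1}{f(h)} \cdot \frac{1}{2}\frac{B''-B'}{k-1} f(h) \;=\; B''-B'.
\]
In either case the correction is at most $B''-B'$, and substituting back gives $\scal \geq B'' - (B''-B') = B'$ on $[0,s]$, as required.

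I do not anticipate a serious obstacle here: the statement is essentially a bookkeeping exercise once one recognises that the three summands of \eqref{eq:curvature-convexcombination} align one-for-one with the three quantities controlled by the assumptions on $f$, on $f(h)$, and on $h''$. The only mildly delicate point is keeping track of the sign of $h''$, but a two-line case distinction handles it.
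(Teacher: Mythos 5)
Your proof is correct and follows the paper's own argument exactly: split the curvature formula \eqref{eq:curvature-convexcombination} into its three summands, bound the first by $h'^2 B''$ using $\sigma(f)\geq B''$, the second by $(1-h'^2)B''$ via hypothesis (1), and absorb the $h''$-correction into $B''-B'$ via hypothesis (2) and $f'\leq 1$. The paper's version avoids your case distinction on the sign of $h''$ by bounding the last term by $-f'(h)(B''-B')$ in one stroke (an inequality that holds for any sign of $h''$), but this is a cosmetic difference.
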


\begin{proof}
From \eqref{eq:curvature-convexcombination}, we get $\scal(dt^2 + f(h(t))^2d\xi^2)\ge$
\[
 h'^2B'' + (1-h'^2)B'' -f'(h)(B''-B') \ge B''-(B''-B') = B'. \qedhere
\]
\end{proof}
\begin{rem}\label{rem:easyestimate}
If $f=h_\delta$ is the torpedo function of radius $\delta$ and if $B''\le\frac1{\delta^2}(k-1)(k-2)$, then hypothesis (1) of \ref{lem:easyestimate} is satisfied for each $r>0$. 
\end{rem}
The following two elementary lemmas are slightly adapted versions of \cite[Lemma 3.5 and Lemma 3.7]{Chernysh}.

\begin{lem}[Existence of sloping functions]\label{lem:slopingcurve}
Let $b \in (0,R)$, $0<a<\frac8{10}b$ and let $p>0$. Then there exists $q \in (0,1)$, only depending on $p$ and $b$, a family $u_{r,s}:\bR \to \bR$ of functions and $c_{r,s}\in\bR$, both depending continuously on $(r,s)\in [0,1] \times [0,1]$ such that
\begin{enumerate}
\item $u_{r,0}=id$ and $u_{r,s}=id$ on $(-\infty,\frac8{10}a]$ for all $r,s$,
\item $u_{r,s}(c_{r,s}) = b$,
\item $u_{r,s}''\le pr$,
\item $u_{r,s}''|_{[\frac8{10}a,a]}\leq 0$, $u_{r,s}''|_{[\frac8{10}c_{r,s},c_{r,s}]} \geq 0$ and $u_{r,s}''=0$ outside these intervals,
\item $u_{r,s}'=1-sq$ on $[a,\frac8{10}c_{r,s}]$ and $u_{r,s}' = (1-sq+rsq)$ on $[c_{r,s},\infty)$,
\item $0\leq u_{r,s}' \leq 1$ for all $r,s$. 
\end{enumerate}
We call $u_{r,s}$ a \emph{sloping function} with parameters $a,b,p$ and $q$ the \emph{resulting slope}. The situation is depicted in the following figure.
\begin{figure}[h]
\includegraphics[width=35em]{firstdeform.pdf}
\end{figure}
\end{lem}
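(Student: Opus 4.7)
The plan is to prescribe $u''_{r,s}$ as a linear combination of two bumps — one negative on $[\frac{8a}{10},a]$, one positive on $[\frac{8c}{10},c]$ — with weights built from $s$, $r$, and a single small constant $q$, and to determine $c=c_{r,s}$ from the endpoint condition $u_{r,s}(c_{r,s})=b$. Fix once and for all smooth shapes $\eta_1\le 0$ supported in $[\frac{8a}{10},a]$ with $\int\eta_1=-1$, and $\tilde\eta_2\ge 0$ supported in $[\frac{8}{10},1]$ with $\int\tilde\eta_2=1$. For $c>0$ let $\eta_2^c(t):=\tilde\eta_2(t/c)/c$; its support is $[\frac{8c}{10},c]$, its integral is $1$, and $\max\eta_2^c=c^{-1}\max\tilde\eta_2$. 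Given $q\in(0,1)$ and $c>0$, define
\[
u''_{r,s,c}:=sq\,\eta_1+rsq\,\eta_2^c,
\]
then integrate once subject to $u'_{r,s,c}\equiv 1$ on $(-\infty,\frac{8a}{10}]$, and once more subject to $u_{r,s,c}=\id$ on $(-\infty,\frac{8a}{10}]$. By construction $u'$ takes the value $1-sq$ on $[a,\frac{8c}{10}]$ and $1-sq+rsq$ on $[c,\infty)$; conditions (1), (4), (5), and (6) are then immediate (the last one provided $q<1$).

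To fix $c$, a direct integration gives
\[
u_{r,s,c}(c)=\tfrac{8a}{10}+I_1(s)+(1-sq)\bigl(\tfrac{8c}{10}-a\bigr)+(1-sq)\tfrac{2c}{10}+rsq\,c\,K,
\]
where $K:=\int_{8/10}^{1}(1-v)\tilde\eta_2(v)\,dv>0$ depends only on the fixed profile $\tilde\eta_2$ and $I_1(s)$ is linear in $s$ and independent of $r$ and $c$. The equation $u_{r,s,c}(c)=b$ is linear in $c$ with positive coefficient $1-sq+rsqK\ge 1-q$, so it has the explicit solution
\[
c_{r,s}=\frac{b-\frac{8a}{10}-I_1(s)+(1-sq)a}{1-sq+rsqK},
\]
which is continuous on $[0,1]^2$ and satisfies $c_{r,0}=b$. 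In particular $u_{r,0}\equiv\id$ because $u''_{r,0}=0$. Setting $u_{r,s}:=u_{r,s,c_{r,s}}$ then verifies conditions (1)–(2).

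It remains to choose $q$ so that condition (3) holds and the two bump supports stay disjoint. Since the displayed formula shows $c_{r,s}\to b$ uniformly in $(r,s)$ as $q\to 0$, one may pick $q\in(0,1)$ so small that $c_{r,s}\in[b/2,2b]$ for all $(r,s)\in[0,1]^2$ (so in particular $\frac{8c_{r,s}}{10}>a$) and that $2q\max\tilde\eta_2/b\le p$. Then
\[
u''_{r,s}=\underbrace{sq\eta_1}_{\le 0}+rsq\,\eta_2^{c_{r,s}}\le rsq\cdot\frac{\max\tilde\eta_2}{c_{r,s}}\le rsq\cdot\frac{2\max\tilde\eta_2}{b}\le pr.
\]
Since $\eta_1$ and $\tilde\eta_2$ are fixed once and for all, the threshold for $q$ depends only on $p$ and $b$, as required. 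The main conceptual difficulty is the self-referential nature of the construction — the support $[\frac{8c}{10},c]$ of $\eta_2^c$ depends on the very $c=c_{r,s}$ that is being solved for — but the fact that $u_{r,s,c}(c)$ is linear in $c$ with a positive, uniformly bounded-below coefficient lets one resolve the circularity by an explicit solution formula.
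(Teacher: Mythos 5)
Your construction is essentially the same as the paper's: prescribe $u''_{r,s}$ as a negative bump in $[\tfrac{8a}{10},a]$ of mass $-sq$ plus a positive bump in $[\tfrac{8c}{10},c]$ of mass $rsq$, integrate twice, and determine $c$ from a linear equation. (The paper builds the bumps as rescaled characteristic functions and then mollifies; you use fixed smooth profiles, which cleanly sidesteps the question of whether the mollification shifts $c_{r,s}$ away from the bump positions.) The linearity-in-$c$ observation is the right way to break the circularity. However, there is a genuine gap in the step where you ensure the two bump supports are disjoint.

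You write that one may pick $q$ small enough that $c_{r,s}\in[b/2,2b]$, ``so in particular $\tfrac{8c_{r,s}}{10}>a$.'' That implication is false: the hypothesis only gives $a<\tfrac{8b}{10}$, so for $a$ near $\tfrac{8b}{10}$ and $c_{r,s}$ near $b/2$ you would have $\tfrac{8c_{r,s}}{10}=\tfrac{4b}{10}<a$. Worse, the threshold for ``$q$ small enough that $c_{r,s}$ is within $\epsilon$ of $b$'' depends on $a$ (because $I_1(s)$ does), which would make $q$ depend on $a$ --- and the lemma, and its use in step (4) of the proof of Proposition~\ref{prop:flatten}, crucially require $q$ to be chosen before $\alpha$ and to depend on $p$ and $b$ only. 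So as written the argument does not yield the required independence.

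The good news is that your own explicit formula already contains the fix, and no smallness of $q$ is needed at all for disjointness. Write the numerator as $b-\tfrac{8a}{10}-I_1(s)+(1-sq)a=b-sq(J+a)$ where $J:=\int_{8a/10}^{a}(a-y)\eta_1(y)\,dy\in[-\tfrac{2a}{10},0]$, so $J+a\in[\tfrac{8a}{10},a]$; and note $K=\int_{8/10}^{1}(1-v)\tilde\eta_2(v)\,dv\le\tfrac{2}{10}$, so $1-rK\ge\tfrac{8}{10}$. Then $c_{r,s}\ge b$ is equivalent (for $s>0$) to $J+a\le b(1-rK)$, which holds because $J+a\le a<\tfrac{8b}{10}\le b(1-rK)$. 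Hence $c_{r,s}\ge b$ and $\tfrac{8c_{r,s}}{10}\ge\tfrac{8b}{10}>a$ for all $(r,s)$ and all $q\in(0,1)$, with no constraint on $q$. With that, the only constraint on $q$ is $q\le pb/\max\tilde\eta_2$ coming from condition (3), which indeed depends only on $p$ and $b$ (and on the once-and-for-all profile $\tilde\eta_2$). Replacing the ``$c_{r,s}\in[b/2,2b]$'' step by this direct estimate makes your proof complete and, in my view, tidier than the paper's.
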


\begin{proof}
We construct $u_{r,s}$ by constructing its second derivative, and do this by first constructing a piecewise continuous approximation to the second derivative of $u_{r,s}$. Choose $q\leq  \frac{b}{10}p$. Define a piecewise continuous function $w_{r,s}: \bR\to \bR$ by 
\[
 w_{r,s} = -s \chi_{[\frac{17}{20}a ,\frac{19}{20}a]} \frac{10 q}{a} + rs \chi_{[\frac{17}{20}e_{r,s} ,\frac{19}{20}e_{r,s}]} \frac{10 q}{e_{r,s}}.
\]
where $e_{r,s}$ is to be determined. Let $v_{r,s} (x):= \int_0^x \int_0^t w_{r,s}(y)dy$ and choose $e_{r,s} \geq b$ to be the unique point such that $v_{r,s}(\frac{8}{10} e_{r,s})= \frac{8}{10} b$. Now let $\xi \geq 0$ be a smooth, even, nonnegative function with compact support and $\int_\bR\xi (x) dx =1$ and let $\xi_\eps (x):= \frac{1}{\eps}\xi(\frac{x}{\eps})$. The function 
\[
 u_{r,s} (x):=\xi_\eps \ast v_{r,s}
\]
has, if $\eps$ is sufficiently small, all desired properties. Finally, we define $c_{r,s}$ as the unique point with $u_{r,s}(c_{r,s})=b$.
\end{proof}

\begin{lem}[Existence of bending functions]\label{lem:bendingcurve}
Let $C>0$ and $\beta>0$. Then there exists $\alpha\in(0,\beta)$ and a family of functions $v_{r,s}:\bR \to \bR$ and $d_{r,s}\in\bR$, depending continuously on $(r,s) \in (0,1] \times [0,1]$ such that
\begin{enumerate}
\item $v_{r,0}=id$ and $v_{r,s}=id$ on $(-\infty,\frac{\alpha}2]$,
\item $v_{r,s}(d_{r,s})=\beta$,
\item $v_{r,1}'\equiv 0$ near $\alpha$,
\item $v_{r,s}''\le C\frac{r}{t}$ and $v_{r,s}''\le 0$ on the complement of $[2\alpha,d_{r,s}]$,
\item $v_{r,s}'\equiv 1-s+rs$ on $[d_{r,s},\infty)$.
\item $0\leq v_{r,s}' \leq 1$ for all $r,s$. 
\end{enumerate}
We call $v_{r,s}$ a family of bending functions with parameters $C,\beta$. The point $\alpha$ is called the \emph{attacking point}. The following figure depicts the situation.
\begin{figure}[h]
\includegraphics[width=35em]{seconddeform.pdf}
\end{figure}
\end{lem}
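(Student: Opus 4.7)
The plan is to mimic the proof of Lemma~\ref{lem:slopingcurve}: first prescribe a piecewise continuous function $w_{r,s}$ that will play the role of an approximation to the intended second derivative $v_{r,s}''$, integrate twice (starting from $v_{r,s}(0)=0$ and $v_{r,s}'(0)=1$) to obtain a piecewise $C^2$ prototype of $v_{r,s}$, smooth by convolution with a sufficiently narrow even bump $\xi_\eps$, and finally define $d_{r,s}$ implicitly as the unique point where the smoothed $v_{r,s}$ attains the value $\beta$.

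The shape of $w_{r,s}$ is essentially forced by the conditions. Pick $\alpha \in (0,\beta)$ small enough that $2\alpha e^{1/C} < \beta$ (for instance $\alpha := \beta e^{-1/C}/4$) and a smooth nonnegative bump $\rho$ supported in $[\alpha/2, 3\alpha/4]$ with $\int \rho = 1$. Define
\[
w_{r,s}(t) := -s\,\rho(t) \;+\; \frac{sCr}{t}\cdot \chi_{[2\alpha,\,2\alpha e^{1/C}]}(t).
\]
The negative bump drops the slope of the antiderivative from $1$ to $1-s$ (in particular to $0$ when $s=1$, yielding condition~(3) on the two-sided neighborhood $[3\alpha/4, 2\alpha]$ of $\alpha$). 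The positive piece on $[2\alpha, 2\alpha e^{1/C}]$ carries total mass $sCr\log(e^{1/C}) = rs$, raising the slope back to $1-s+rs$, and automatically satisfies the pointwise bound $w_{r,s}(t) \le Cr/t$ on its support because $s \le 1$. Integrating twice with the given initial conditions produces a function that is affine of slope $1-s+rs > 0$ on $[2\alpha e^{1/C}, \infty)$ and that satisfies $v_{r,s}(2\alpha e^{1/C}) \le 2\alpha e^{1/C} < \beta$; consequently the equation $v_{r,s}(t) = \beta$ admits a unique solution $d_{r,s} \ge 2\alpha e^{1/C}$ depending continuously on $(r,s) \in (0,1]\times[0,1]$. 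Convolving $w_{r,s}$ with a narrow nonnegative even bump $\xi_\eps$ smoothes the construction; all six conditions of the lemma involve either strict inequalities or integrals, so they persist after this smoothing (with a small safety margin) once $d_{r,s}$ is readjusted by a tiny amount.

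The main delicate point is the simultaneous enforcement of the pointwise bound $v'' \le Cr/t$ and the global requirement that the positive mass of $v''$ equal exactly $rs$. The inequality forces any such positive piece supported on $[a,b]$ to satisfy $\log(b/a) \ge s/C$, and choosing the uniform support $[2\alpha, 2\alpha e^{1/C}]$ for all $s\in[0,1]$ together with the density equal to $s$ times the maximal admissible density $Cr/t$ circumvents any continuity issue as $s \to 0$ and is the real reason the whole construction goes through. Everything else (the normalisation on $(-\infty,\alpha/2]$, continuity of $d_{r,s}$, the flat segment near $\alpha$ at $s=1$, and the non-positivity of $v_{r,s}''$ off $[2\alpha, d_{r,s}]$) is bookkeeping.
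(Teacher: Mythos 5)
Your proposal is correct and follows essentially the same route as the paper: pick $\alpha = \frac{\beta}{4}e^{-1/C}$, prescribe a piecewise second derivative consisting of a negative bump of mass $-s$ supported just past $\alpha/2$ and a positive density $rs\,C/t$ on $[2\alpha,\,2\alpha e^{1/C}]$ (the paper writes this interval as $[2\alpha,\gamma]$ with $\gamma=\beta/2$, which is the same set), integrate twice, mollify, and let $d_{r,s}$ be where the value $\beta$ is attained. The key observation you single out --- that the pointwise ceiling $Cr/t$ together with the total-mass requirement $rs$ forces a support of logarithmic length at least $1/C$, and that taking density $s$ times the ceiling on a fixed interval of exactly that length handles continuity at $s=0$ --- is indeed the heart of the construction and agrees with what the paper does. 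One cosmetic caveat applies equally to both your write-up and the paper's: after convolving, the pointwise bound $v_{r,s}''\le Cr/t$ and the sign condition off $[2\alpha,d_{r,s}]$ only hold with an $O(\eps)$ margin unless the supports are pulled strictly inside the open intervals and the density is dialed down by a factor $1-\delta$ before smoothing; this is easy to arrange and neither account spells it out.
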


\begin{proof}
This is by a similar method as the proof of Lemma \ref{lem:slopingcurve}. Let $\gamma:= \frac{\beta}{2}$ and $\alpha:= \frac{\beta}{4} e^{- \frac{1}{C}}$. We consider the piecewise continuous function
\[
 f_{r,s} = -s \chi_{[\frac{4}{6} \alpha ,\frac{5}{6}\alpha ]} \frac{6}{\alpha} + rs \chi_{[2 \alpha,\gamma]} \frac{C}{t} 
\]
and set 
\[
 w_{r,s}(t):= \int_0^t \Bigl( 1 + \int_0^x   f_{r,s}(y)dy \Bigr) dx.
\]
A straightforward, but lenghty integral computation reveals that $w_{r,s}$ has all the desired properties, except that it is only piecewise $C^2$. For suffienctly small $\eps$, consider the convoluted function
\[
v_{r,s}:= \xi_\eps \ast w_{r,s}.
\]
The point $d_{r,s}$ is the unique point with $v_{r,s}(d_{r,s})=\beta$.
\end{proof}

\begin{proof}[Proof of Proposition \ref{prop:flatten}]
We may assume that  $G_0(x) = G_0(\frac{x}{\norm{x}})$ for $\norm{x}\ge\frac12$, otherwise we perform an obvious homotopy beforehand to achieve this. 
We write the metric $G_0 (x)$ as $g_N + g(x)$ for a rotationally invariant metric $g(x)$ on $B_R^k$. Recall that $B \in [0,\frac{1}{ \delta^2} (k-1)(k-2))$ and $\scal (g(x)) > B$ for all $x \in D^n$. Let $f_x: [0,R] \to \bR$ be the warping function of $G_0 (x)$. The proof begins with making some choices:
\begin{enumerate}
\item Pick 
\[
 B < B' < B'' < \frac{1}{\delta^2} (k-1)(k-2)
\]
so that $\scal(g(x)) \geq B''$ for all $x \in D^n$. 
\item By \eqref{equ:thirdderivative}, there is $S \in (0,R)$ such that $f_x'\in[0,1]$ and $f_x''\le 0 $ on $[0,S]$ for all $x \in D^n$. We can furthermore pick $S$ so that $B'' S^2 \leq  (k-1)(k-2)$ and $S \leq \delta$.
\item Let $F:= \inf_{x\in D^n}f_x(\frac{8}{10}S)>0$ and $p:=\frac{1}{2(k-1)}(B''-B')F\leq \frac{8}{10}\frac{1}{2(k-1)}(B''-B')S$.
\item Let $q$ be the resulting slope (see Lemma \ref{lem:slopingcurve}) of a sloping function with parameters $(\alpha,S,p)$, for $\alpha <  \frac{8}{10} S$ (recall the constant $q$ only depends on $p$ and $S$, not on $\alpha$, which we have to pick later).
\item Now we pick $T \in (0, \frac{8}{10} S]$ and $C>0$ so that 
\[
 B' T^2 + 2(k-1) C \leq (k-1)(k-2) (1-(1-q)^2).
\]
\item Pick a family of bending functions $v_{r,s}$ for the parameters $(C,T)$ and let $\alpha>0$ be the attacking point of this family of bending functions. The numbers $d_{r,s} \in \bR$ are as in Lemma \ref{lem:bendingcurve}.
\item Pick a family of sloping functions $u_{r,s}$ for the parameters $(\alpha,S,p)$. The numbers $c_{r,s}$ are as in Lemma \ref{lem:slopingcurve}.
\end{enumerate}
Now we construct the homotopy $G: [0,1] \times D^n \to \Riem^+_{rot}(M)$. We first construct it on the part $[0,1] \times D^n_{\frac{1}{2}}$, the disc of radius $\frac {1}{2}$. 
\begin{enumerate}
 \item On $[0, \frac{1}{3}] \times D^n_{\frac{1}{2}}$, we define
 \[
  G(\lambda,x):= \Lambda(g(x), u_{1,3\lambda},c_{1,3\lambda})
 \]
and claim that $\scal(G(\lambda,x)) \geq B'$ for all such $x$ and $\lambda$. In the region where $u_{1,3 \lambda}'' \leq 0$, there is no problem: there $f_x(u_{1,3\lambda}) \leq S$, and we picked $S$ small enough to satisfy the first hypothesis of Lemma \ref{lem:easyestimate}. In the region where $u_{1,3 \lambda}'' \geq 0$, we have by construction $u_{1,3 \lambda} \geq \frac{8}{10} S$, and hence $f_x(u_{1,3\lambda}) \geq F$. Therefore 
\[
 u_{1,3\lambda}'' \leq p = \frac{F(B''-B')}{2(k-1)} \leq \frac{B''-B'}{2(k-1)} f_x
\]
and the claim follows from Lemma \ref{lem:easyestimate}.
 \item The warping function $f=f_{x,\frac{1}{3}}$ of $G(x,\frac{1}{3})$ satisfies $f' \leq 1-q$ on the interval $[\alpha,T]$. Furthermore, the scalar curvature of $G(x,\frac{1}{3})$ is bounded from below by $B'$. Now we define $G: [\frac{1}{3},\frac{2}{3}] \times D^n_{\frac{1}{2}}$ by
 \[
 G(\lambda,x) := \Lambda(G(\frac13,x),v_{1,3\lambda-1}, d_{1,3\lambda-1}).
 \]
We claim that $\scal(g(x,\lambda)) \geq B'$ for all such $\lambda$ and $x$. Again, there is no problem in the region where $v_{1,3\lambda-1}'' \leq 0$. In the region where $v_{1,3\lambda-1}'' \geq 0$, we have $f' \leq 1-q$, $0 \leq v' \leq 1$, $f'' \leq 0$ and $f \leq T$. Therefore 
\[
 (k-1) \Bigl( (k-2) \frac{1-f'^2 v'^2}{f^2} - 2 \frac{f''}{f} v'^2 - 2 \frac{f'}{f} v'' \Bigr) - B' \geq 
\]
\[
\geq (k-1) \Bigl( (k-2) \frac{1-(1-q)^2}{f^2}  - 2 \frac{f'}{f} v'' \Bigr) - B' =
\]
\[
=\frac{1}{f^2}   \Bigl( (k-1)(k-2)(1-(1-q)^2)  - 2(k-1) f f' v''  - B' f^2 \Bigr). 
\]
But now $f' \leq 1$ in the relevant region, which implies $f(t) \leq t$. Since $0 \leq v'' \leq \frac{C}{t}$, the last term is 
\[
 \geq  \frac{1}{f^2}   \Bigl( (k-1)(k-2)(1-(1-q)^2)  - 2 (k-1)t \frac{C}{t}  - B' T^2 \Bigr) = \frac{1}{f^2}   \Bigl( (k-1)(k-2)(1-(1-q)^2)  - 2 (k-1) C  - B' T^2 \Bigr),
\]
using that $f \leq T$. But this is nonnegative, by our choice of $T$. 
\item Now we turn to the region $\norm{x} \geq \frac{1}{2}$. Here we have $f_x=h_{\delta}$, the torpedo function of radius $\delta$. In the region $\frac12 \leq \norm{x} \leq \frac23$ and $0 \leq \lambda \frac23$, we merely change the point where the warping function obtained by composition with $v_{...}$ or $u_{...}$ is glued to the original metric, until this gluing is done in the region where the warping function $f_x=h_\delta$ is constant. There is no problem in doing this, as all the functions $u_{1,s}$ and $v_{1,s}$ are linear with slope $1$ beyond $c_{1,s}$ and $d_{1,s}$. The concrete realization by formulas is 
\[
G(\lambda,x):=
\begin{cases}
\Lambda (g(x),u_{1,3\lambda}, c_{1,3\lambda}+ 6 (R-S)\norm{x} + 3(S-R)) & \lambda \leq \frac13\\
\Lambda (g(\frac13,x),v_{1,3\lambda-1},d_{1,3\lambda-1}- 3 (c_{1,1}+ R-S-T) + 6 (c_{1,1}+R-S-T) \norm{x}) & \frac13 \leq \lambda \leq \frac23.
\end{cases}
\]
For $\norm{x}=\frac23$, we have
\[
G(\lambda,x):=
\begin{cases}
\Lambda (g(x),u_{1,3\lambda}, c_{1,3\lambda}+ R-S) & \lambda \leq \frac13\\
\Lambda (g(\frac13,x),v_{1,3\lambda-1},d_{1,3\lambda} + c_{1,1}+R-S-T )& \frac13\leq \lambda \leq \frac23,
\end{cases}
\]
and since $u_{1,3\lambda} (c_{1,3\lambda} + R-S)=R$ and $v_{1,3\lambda-1}(d_{1,3\lambda-1} + c_{1,1}+R-S-T) =T+ c_{1,1}+R-S-T=  c_{1,1}+R-S$, the gluing now takes place in the region where $f_x = h_\delta$ is constant. Hence (compare Lemma \ref{lem:deformationnull}), we are now free to change the functions $u_{1,3\lambda}$ and $v_{1,3\lambda-1}$ by functions whose derivative at the relevant point is not equal to $1$. We use this additional freedom to construct the homotopy in the region $\norm{x} \geq \frac23$. 
\item In the region $\frac23 \leq \norm{x} \leq \frac56$, we use the first parameter in the sloping and bending function and ``dampen'' those. To that end, let us pick $\eta\in (0,1)$ (we have to pick $\eta$ small enough so that the next step goes through). We set, for $\frac23\leq \norm{x} \leq \frac56$,
\[
G(\lambda,x):=
\begin{cases}
\Lambda (g(x),u_{6(\eta-1)\norm{x}+5-4\eta,3\lambda}, \tilde{c}_{\norm{x},3\lambda}) & \lambda \leq \frac13\\
\Lambda (g(\frac13,x),v_{6(\eta-1)\norm{x}+5-4\eta,3\lambda-1},\tilde{d}_{\norm{x},3\lambda}-1 )& \frac13\leq \lambda \leq \frac23,
\end{cases}
\]
where $\tilde{c}_{\norm{x},3\lambda}$ is the unique point with $u_{6(\eta-1)\norm{x}+5-4\eta,3\lambda}( \tilde{c}_{\norm{x},3\lambda})=R$ and $\tilde{d}_{\norm{x},3\lambda-1}$ is the unique point with $v_{6(\eta-1)\norm{x}+5-4\eta,3\lambda-1}(\tilde{d}_{\norm{x},3\lambda} -1)= \tilde{c}_{\norm{x},1}$. 

All the curvature estimates done in this proof so far apply as well when the sloping function $u_{1,3\lambda}$ is replaced by $u_{r,3\lambda}$ and the bending function $v_{1,3\lambda-1 }$ is replaced by $v_{r,3\lambda-1}$, for $r >0$. Hence the above formula defines metrics of positive scalar curvature. 

For $\norm{x}=\frac56$, we can write 
\[
G(\lambda,x)=\Lambda (g(x),h_\lambda, a_\lambda),
\]
where 
\[
h_\lambda:= 
\begin{cases}
u_{\eta,3\lambda} & \lambda \leq \frac13\\
u_{\eta,1} \circ v_{\eta, 3\lambda-1} & \lambda \geq \frac13,
\end{cases}
\]
and $a_\lambda\in (0,\infty)$ is the point with $h_\lambda(a_\lambda)=R$. By Lemma \ref{lem:slopingcurve}, Lemma \ref{lem:bendingcurve} and the chain rule, we have 
\begin{equation}\label{est:secderh}
h''_\lambda \leq \max\{\eta p , \eta\frac{C}{2 \alpha} \} = \eta\max\{ p , \frac{C}{2 \alpha} \}. 
\end{equation}
\item In the region $\frac56 \leq \norm{x}\leq 1$, we change the function $h_\lambda$ and ``pull it away from zero to the region around $R$''. More precisely, we set 
\[
h_{\lambda,s}(t):= h_\lambda (t-s)+s
\]
and let $a_{\lambda,s}$ be the point with $h_{\lambda,s}(a_{\lambda,s})=R$. Now we put, for $\lambda \in [0,\frac23]$ and $\frac56 \leq \norm{x} \leq 1$:
\begin{equation}\label{last-homotopy}
G(\lambda,x)=\Lambda (g(x),h_{\lambda,6R\norm{x}-5R}, a_{\lambda,6R\norm{x}-5R}). 
\end{equation}
\begin{center}
\begin{figure}
\includegraphics[width=30em]{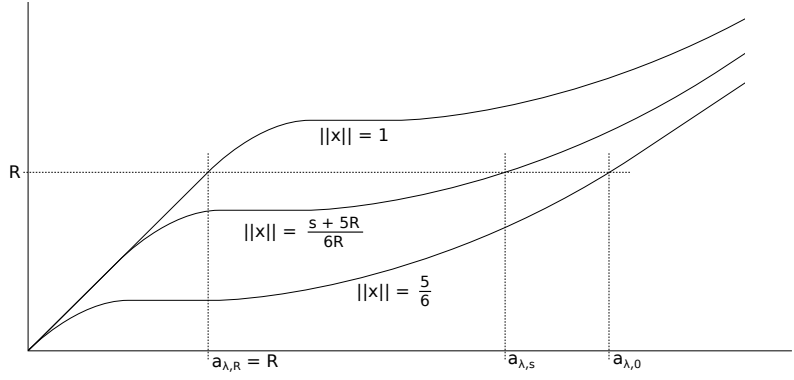}
\caption{$h_{\lambda,6R\norm{x}-5R}$ for various values of $\norm{x}$}
\end{figure}
\end{center}
If $\norm{x}=1$, then $h_{\lambda,6R\norm{x}-5R}\equiv \id$ on $[0,R]$. In other words, the metric $G(\lambda,x)$ coincides with the original metric on $B_R^k$ (but it is changed outside this disc), so that we indeed get a relative homotopy. It remains to show that \eqref{last-homotopy} defines a psc metric, and for this, we have to pick $\eta$ sufficiently small. The functions $h_{\lambda,s}$ are just translated versions of $h_\lambda$, and so their second derivatives still satisfies \eqref{est:secderh}. But recall Lemma \ref{lem:easyestimate} and Remark \ref{rem:easyestimate}: together, they show that \eqref{last-homotopy} defines a psc metric, as long as we pick $\eta$ small enough so that
\[
 \eta\max\{ p , \frac{C}{2 \alpha} \} \leq \frac{B''-B'}{2(k-1)}  \delta.
\]
\item Let us summarize what we have achieved so far: The map $G: [0,\frac{2}{3}] \times D^n\to \Riem^+_{rot}(M)$ is continuous, $G([0,\frac23]\times S^{n-1})\subset \Riem^+ (M,\varphi)$. For $\lambda = \frac23$, the metric $G(x,\frac{2}{3})$ has a warping function $f_{\frac23,x}$, which we constructed in such a way that $f_{\frac23,x}'\equiv 0$ near 
\[
\tau(x):=
\begin{cases}
\alpha & \norm{x} \leq \frac{5}{6}\\
\max{R,\alpha + 6R \norm{x} - 5R  } & \norm{x} \geq \frac{5}{6}.  
\end{cases}
\]
\item The last step is to stretch the collar around $\tau(x)$. This does not require us to be careful anymore. Let $\eps>0$ so that $f_{\frac23,x}' \equiv 0$ on $[\tau(x)-\eps,\tau(x)+\eps]$. Pick a family $(h_{\lambda,x})_{(\lambda,x)\in [\frac23,1]\times D^n}$ of smooth functions and $a_{\lambda,x}\in \bR$, depending continuously on $\lambda$ and $x$ so that 
\begin{itemize}
\item $h_{\frac23,x}=\id$, 
\item $h_{\lambda,x} |_{[0,\tau(x)-\eps]}=\id$ for all $\lambda$, 
\item $h_{\lambda,x}|_{[\tau(x)+\eps,\infty)}\equiv \tau(x)$ for $\lambda\geq \frac56$,
\item $h''_{\lambda,x}\leq 0$, $0 \leq h'_{\lambda,x} \leq 1$ for all $\lambda$.
\item $a_{\lambda,x}= \tau(x)$ for $\lambda \in [\frac23,\frac56]$,
\item $a_{1,x}=R$ for all $x$, 
\item $a_{\lambda,x}$ is monotone increasing in $\lambda$.
\end{itemize}
The desired deformation of the metric on $[\frac23,1]\times D^n$ is
\[
G(\lambda,x):= \Lambda (g(\frac23,x), h_{\lambda,x} , a_{x,\lambda}). 
\]
\end{enumerate}
\end{proof}

\subsection{Deforming to a torpedo and adjusting widths}\label{subsecd:adjustingtorpedo}

Now we are able to finish the proof of Proposition \ref{prop:main2} and hence of Theorem \ref{thm:main-part1-precise}. Consider a commutative diagram
\begin{equation}\label{diag:adjustingwidths}
 \xymatrix{
 S^{n-1} \ar[r]^-{G_0} \ar[d] & \Riem^+ (M,\varphi)\ar[d]\\ 
 D^n \ar[r]^-{G_0} & \Riem^+_{rot}(M).
 }
\end{equation}
By Proposition \ref{prop:flatten}, we may assume that $G_0(x)$ is given, on $N \times B_R^k$, by $g_N + dt^2+ f_x (t)^2 d\xi^2$, where the warping function $f_x$ satisfies
\begin{itemize}
\item $f_x' \equiv 0$ on near $R$, 
\item $f_x \leq \delta$, and we define $\delta_x:= f_x (R)$,
\item $0 \leq f'_x \leq 1$ and $f_x'' \leq 0$ on $[0,R]$,
\item for $\norm{x}\geq \frac12$, $f_x$ is the $\delta$-torpedo function $h_\delta$ (in \ref{prop:flatten}, this is only required for $\norm{x}=1$, but an obvious homotopy achieves this condition for $\norm{x} \geq \frac12$). 
\end{itemize}
Furthermore, there are constants $ \frac{(k-1)(k-2)}{\delta^2} >B''>B' >B\geq 0$ so that 
\[
 (k-1) \Bigl( (k-2) \frac{1-f_x'^2}{f_x^2} - 2 \frac{f_x''}{f_x} \Bigr) \geq B''.
\]
Let us make three observations.

\begin{observation}\label{firstobsection}
By a collar stretching homotopy as in the last step of the proof of Proposition \ref{prop:flatten}, we may also assume that the metrics $G_0 (x)$ are rotationally symmetric on a bigger disc $B_{R_\infty}^k$ and the warping function is constant on $[R,R_\infty]$, for $R_\infty$ as large as we want. For a given $R_\infty>R$, pick a smooth function $a: \bR \to [0,1]$ such that $|_{(-\infty,R]} \equiv 0$ and $a|_{[R_\infty,\infty)} \equiv 1$. For $\beta\in(0,\delta]$ and $p,q\in [\beta,\delta]$, let 
\[
a_{p,q} (t):= (1-a(t))p + a(t) q.
\]
Then
\begin{align*}
	\sigma(a_{p,q})& =  (k-1)\left((k-2)\frac1{a_{p,q}^2} - (k-2)\frac{a'{}_{p,q}^2}{a_{p,q}^2} - 2\frac{a''_{p,q}}{a_{p,q}}\right)\\
		&\ge(k-1)\frac{k-2}{\delta^2} - \frac{(k-1)(k-2)(q-p)^2a'{}^2}{\beta^2} + \frac{|p-q||a''|}{\beta}\\
		&\ge\underbrace{\frac{(k-1)(k-2)}{\delta^2}}_{\ge B''} - \delta^2\frac{(k-1)(k-2)a'{}^2}{\beta^2} + \delta\frac{|a''|}{\beta}
\end{align*}
So, if $R_\infty$ is large enough, there exists a function $a$ such that the latter term is at least $B'$.
\end{observation}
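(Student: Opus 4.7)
The approach has two parts: execute the collar-stretching homotopy, and then verify the scalar curvature estimate for the displayed interpolation $a_{p,q}$.

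For the collar stretching: since $f_x'\equiv 0$ near $R$ and $D^n$ is compact, there is $\epsilon>0$ such that $f_x\equiv \delta_x$ on $[R-\epsilon,R]$ for all $x\in D^n$. Mimicking the last step of the proof of Proposition \ref{prop:flatten}, I would construct a continuous family $(h_{\lambda,x}, a_{\lambda,x})_{\lambda\in[0,1]}$ of smooth functions and real numbers with $h_{0,x}=\id$, $a_{0,x}=R$, $h_{\lambda,x}|_{[0,R-\epsilon]}=\id$, $0\le h_{\lambda,x}'\le 1$, $h_{\lambda,x}''\le 0$, $h_{\lambda,x}(a_{\lambda,x})=R$, and $a_{1,x}=R_\infty$. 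Define $G(\lambda,x):=\Lambda(G_0(x), h_{\lambda,x}, a_{\lambda,x})$ via Lemma \ref{lem:deformationnull}; continuity follows from Lemma \ref{lem:deformation}(2). Positive scalar curvature is preserved by Lemma \ref{lem:easyestimate} together with Remark \ref{rem:easyestimate} (applicable because $f_x\le \delta$, and we may take $B'$ smaller if needed to absorb the derivative bounds of $h_{\lambda,x}$). Because $f_x$ is already constant on $[R-\epsilon,R]$, the composition $f_x\circ h_{1,x}$ extends this constancy to $[R-\epsilon,R_\infty]$, and the warping function of $G(1,x)$ equals $\delta_x$ on $[R,R_\infty]$ as claimed.

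For the curvature estimate: with $p,q\in[\beta,\delta]$ and $a\colon\bR\to[0,1]$ as in the excerpt, the function $f=a_{p,q}=p+a(q-p)$ satisfies $\beta\le f\le \delta$, $|f'|\le \delta\|a'\|_\infty$ and $|f''|\le\delta\|a''\|_\infty$. Substituting into \eqref{eq:curvatureformula:sec3} and separating the leading $(k-2)/f^2$ contribution yields
\[
\sigma(a_{p,q}) \;\ge\; \frac{(k-1)(k-2)}{\delta^2} \;-\; \frac{(k-1)(k-2)\,\delta^2\,\|a'\|_\infty^2}{\beta^2} \;-\; \frac{2(k-1)\,\delta\,\|a''\|_\infty}{\beta}.
\]
To make this at least $B'$, take a fixed smooth transition $a_0\colon[0,1]\to[0,1]$ (identically $0$ near $0$, identically $1$ near $1$) and define $a(t):=a_0\bigl((t-R)/(R_\infty-R)\bigr)$. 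Then $\|a'\|_\infty=O(1/(R_\infty-R))$ and $\|a''\|_\infty=O(1/(R_\infty-R)^2)$, so the two error terms tend to $0$ as $R_\infty\to\infty$. Since $(k-1)(k-2)/\delta^2\ge B''>B'$, choosing $R_\infty$ sufficiently large yields $\sigma(a_{p,q})\ge B'$.

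The main obstacle is not any single step but the bookkeeping needed to ensure the construction is continuous in $x\in D^n$ and that the threshold for $R_\infty$ can be taken uniform in $x$. Both follow from compactness of $D^n$ and the uniform bound $f_x\le \delta$, together with the fact that $\delta_x=f_x(R)$ depends continuously on $x$ and is bounded away from $0$ (since each $f_x$ is a warping function of a psc metric, hence strictly positive on $[R-\epsilon,R]$).
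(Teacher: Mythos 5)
Your proof is correct and follows essentially the same route as the paper: part one is exactly the collar-stretching homotopy alluded to (with the observation that $h''\le 0$, $f'\ge 0$, $f\le\delta$ means one can cite Lemma \ref{lem:easyestimate}/Remark \ref{rem:easyestimate} directly — no need to shrink $B'$, since with $h''\le 0$ the last term of \eqref{eq:curvature-convexcombination} is nonnegative), and part two is the same term-by-term estimate of $\sigma(a_{p,q})$ using $\beta\le a_{p,q}\le\delta$ together with $a(t)=a_0((t-R)/(R_\infty-R))$ to force $\|a'\|_\infty, \|a''\|_\infty\to 0$. One small point in your favour: the paper's displayed estimate contains a typo — the contribution of $-2(k-1)a''_{p,q}/a_{p,q}$ appears with a $+$ sign and the factor $2(k-1)$ has been dropped, so as written it is not a valid lower bound. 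Your version, $-\,2(k-1)\delta\|a''\|_\infty/\beta$, is the correct term; the conclusion is unaffected since both error terms still tend to $0$ as $R_\infty\to\infty$.
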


\begin{observation}\label{secondobsection}
If $f_0,f_1$ are two warping functions such that $0 \leq f'_i \leq 1$ and $f_i''\leq 0$, and such that $\sigma(f_i)>0$ on $[0,R]$, then for any $\lambda \in [0,1]$, $\sigma ((1-\lambda)f_0+\lambda f_1) >0$ on $[0,R]$. This follows from \eqref{eq:curvatureformula:sec3} and \eqref{equ:thirdderivative}. Since by \eqref{equ:thirdderivative}, we have $((1-\lambda)f_0(0)+\lambda f_1(0))'''<0$, it follows that $((1-\lambda)f_0(t)+\lambda f_1(t))'<1$ for $t>0$ and $((1-\lambda)f_0(t)+\lambda f_1(t))''\leq 0$. Hence the scalar curvature is positive. 
\end{observation}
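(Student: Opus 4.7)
My plan is to observe that convex combinations of warping functions retain the structural hypotheses $0 \leq f' \leq 1$ and $f'' \leq 0$, so that each summand in the curvature formula \eqref{eq:curvatureformula:sec3} is automatically non-negative, and then to extract strict positivity from the assumption $\sigma(f_i) > 0$ on all of $[0,R]$, crucially at the boundary point $t=0$.

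Setting $f_\lambda := (1-\lambda)f_0 + \lambda f_1$, I would first note that term-by-term differentiation gives $0 \leq f_\lambda' \leq 1$ and $f_\lambda'' \leq 0$ on $[0,R]$; moreover $f_\lambda$ inherits the extension to an odd smooth function on $\bR$ with $f_\lambda(0)=0$ and $f_\lambda'(0)=1$. Since $f_\lambda' \geq 0$ and $f_\lambda'(0)=1>0$, the function $f_\lambda$ is non-decreasing with $f_\lambda(t) > 0$ on $(0,R]$. Hence for $t \in (0,R]$ the formula
\[
\sigma(f_\lambda)(t) \;=\; (k-1)\Bigl[(k-2)\tfrac{1-f_\lambda'(t)^2}{f_\lambda(t)^2} \;-\; 2\tfrac{f_\lambda''(t)}{f_\lambda(t)}\Bigr]
\]
immediately displays $\sigma(f_\lambda)(t)$ as a sum of two non-negative terms.

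To upgrade this to strict positivity on $(0,R]$, I would show that $f_\lambda'(t) < 1$ for every $t > 0$. By \eqref{equ:thirdderivative} applied at $t=0$, the hypothesis $\sigma(f_i)(0) > 0$ forces $f_i'''(0) < 0$ for $i=0,1$, so $f_\lambda'''(0) = (1-\lambda) f_0'''(0) + \lambda f_1'''(0) < 0$. Combined with $f_\lambda'(0)=1$ and $f_\lambda''(0)=0$, a Taylor expansion yields $f_\lambda'(t) = 1 + \tfrac{1}{2} f_\lambda'''(0)\, t^2 + o(t^2) < 1$ on some interval $(0,\epsilon]$; and since $f_\lambda'' \leq 0$ globally, $f_\lambda'$ is non-increasing on $[0,R]$, so $f_\lambda'(t) \leq f_\lambda'(\epsilon) < 1$ for every $t \in (0,R]$. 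This renders the first summand strictly positive and therefore $\sigma(f_\lambda)(t) > 0$ on $(0,R]$.

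Finally, at $t=0$ both summands are indeterminate, but the L'H\^opital computation carried out in the derivation of \eqref{equ:thirdderivative} applies verbatim to $f_\lambda$ and yields $\sigma(f_\lambda)(0) = -k(k-1)\,f_\lambda'''(0) > 0$. The main obstacle is precisely this passage from non-negativity to strict positivity: each individual summand in the formula is allowed to vanish (indeed both vanish at $t=0$), so the pointwise estimates alone are insufficient, and one must leverage the strict inequality $\sigma(f_i)>0$ at the origin, via the third-derivative bound \eqref{equ:thirdderivative}, to rule out simultaneous vanishing at any $t \in [0,R]$.
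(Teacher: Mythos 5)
Your proposal is correct and takes essentially the same route as the paper: in both, one applies \eqref{equ:thirdderivative} to each $f_i$ to get $f_i'''(0)<0$, uses linearity to conclude $f_\lambda'''(0)<0$ and $f_\lambda''\le 0$, and then deduces $f_\lambda'(t)<1$ for $t>0$, which makes both terms in \eqref{eq:curvatureformula:sec3} nonnegative with the first strictly positive. You merely spell out the steps the paper compresses — the Taylor expansion at $0$ to get $f_\lambda'<1$ on $(0,\epsilon]$, the use of concavity to propagate this to all of $(0,R]$, and the L'H\^opital evaluation of $\sigma(f_\lambda)(0)$, which the paper leaves implicit.
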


\begin{observation}\label{thirdobsection}
For $0<\theta$ and a warping function $f$, let $f^\theta(t) := \theta f(\frac t\theta)$. Then $f^\theta(t)' = f'(\frac t\theta)$ and $f^\theta(t)'' = \frac1\theta f''(\frac t\theta)$ and we get 
$$\sigma(f^\theta(t)) = \frac{1}{\theta^2}(k-1)\left((k-2)\frac{1-f'(\frac{t}{\theta})^2}{f(\frac{t}{\theta})^2} - 2\frac{f''(\frac{t}{\theta})}{f(\frac{t}{\theta})}\right) = \frac{1}{\theta^2} \sigma (f(\frac{t}{\theta})). $$
\end{observation}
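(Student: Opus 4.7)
The statement is a direct scaling identity, and the plan is simply to carry out the substitution in two clean steps, with no real obstacle beyond bookkeeping.

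First I would compute the derivatives of $f^\theta(t) = \theta f(t/\theta)$ using the chain rule. Since differentiation in $t$ brings down a factor $1/\theta$ each time while the prefactor $\theta$ is killed by the first such factor, one gets $(f^\theta)'(t) = f'(t/\theta)$ and $(f^\theta)''(t) = \tfrac{1}{\theta} f''(t/\theta)$, exactly as stated. This is where I would just verify the two formulas cleanly and record them.

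Next I would substitute these into the definition of $\sigma$ from \eqref{shortnotation-sclar-of-arping}, applied to the warping function $f^\theta$. The three ingredients are $f^\theta(t)^2 = \theta^2 f(t/\theta)^2$, $1 - (f^\theta)'(t)^2 = 1 - f'(t/\theta)^2$, and $(f^\theta)''(t)/f^\theta(t) = \tfrac{1}{\theta^2} f''(t/\theta)/f(t/\theta)$. Pulling the common factor $1/\theta^2$ out of both terms in the bracket yields the middle expression in the statement, which is exactly $\tfrac{1}{\theta^2} \sigma(f)(t/\theta)$.

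The only thing to note is that $\sigma(f)(t/\theta)$ means the scalar curvature function $\sigma(f)$ evaluated at the rescaled argument $t/\theta$, not $\sigma$ applied to the function $t\mapsto f(t/\theta)$; this is consistent with the convention already used in \eqref{shortnotation-sclar-of-arping}. There is no main obstacle here — the identity is a textbook consequence of the fact that $\sigma$ has scaling weight $-2$ under the homothety $g\mapsto \theta^2 g$, and the short calculation above makes this manifest.
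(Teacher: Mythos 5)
Your proposal is correct and matches the paper's own (implicit) argument exactly: compute $(f^\theta)'$ and $(f^\theta)''$ by the chain rule, substitute into \eqref{shortnotation-sclar-of-arping}, and factor out $\tfrac{1}{\theta^2}$. Nothing further is needed.
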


\begin{proof}[Proof of Proposition \ref{prop:main2}]
Consider a diagram as in \eqref{diag:adjustingwidths}, with the properties provided by Proposition \ref{prop:flatten}, and furthermore, by Observation \ref{firstobsection}, we may assume that the metrics $G(0,x)$ is given by a warping function $f_x: [0,R_\infty] \to \bR$, with $R_\infty$ as in \ref{firstobsection}. The function $f_x$ is constant on $[R,R_\infty]$, and it is convenient to extend it to all of $[0,\infty)$, by $f(t):= f(R_\infty)$ for $t>R_\infty$. 

The goal is to deform $f_x$ into a function which is equal to the torpedo function $h_\delta$ on $[0,R]$, while retaining this property if $\norm{x}=1$. 

By \ref{secondobsection}, there exists $A>0$, so that 
\[
\sigma ((1-\lambda)f_x+\lambda h_\delta) \geq A
\]
for all $(x,\lambda )\in D^n \times [0,1]$. It follows from \ref{thirdobsection} that there is a continuous function $\theta:D^n \to (0,1]$ such that 
\[
\sigma ((1-\lambda)f^{\theta(x)}_x+\lambda h^{\theta(x)}_\delta)\geq B''
\]
for all $\lambda \in [0,1]$, $x\in D^n$ and $t\in [0,R]$. Since $f_x = h_\delta$ when $\norm{x} \geq \frac12$, we can furthermore assume that $\theta(x)=1$ for $\norm{x}=1$. 

The desired deformation of warping functions is given on the interval $[0,R]$ by the formula (note that $h_\delta^{\theta}=h_{\theta \delta }$)
\[
f_{x,\lambda}(t):= 
\begin{cases}
f_x^{3 \lambda \theta(x) + 1-3\lambda} (t)& \lambda \in [0, \frac13]\\ 
(2-3\lambda)f_x^{ \theta(x)}(t) + (3\lambda -1)h_{\delta_x}^{ \theta (x)}(t) & \lambda \in [ \frac13,\frac23]\\ 
h_{(3-3\lambda)\theta(x)\delta_x+ (3\lambda-2)\delta}(t) & \lambda \in [ \frac23,1].\\ 
\end{cases}
\]
Now, let 
\[
\beta:=\min_{(x,\lambda)\in D^n\times[0,1]}\{f_{x,\lambda}(R),f_x(R)\}\le\delta.
\]
and pick $R_\infty>R$ large enough so that there exists an $a$ as in \ref{firstobsection}. Finally, on the interval $[R,R_\infty]$ we define 
\[
f_{x,\lambda} (t)= a_{f_{x,\lambda}(R),f_x(R)}(t).\qedhere
\]
\end{proof}

\section{The fibration theorem and Theorem \ref{thm:main-boundary}}

\subsection{Proof of the fibration theorem}\label{subsec:fibrationtheorem}

We now present the proof of Theorem \ref{thm:improved-chernysh-theorem}.

\begin{lem}\label{lem:chernysh-thm-proof-approximation}
Let $M$ be a closed manifold, $P$ a compact space and $G:P \times [0,1] \to \Riem^+ (M)$ be continuous. Then there is a continuous map 
\[
 C: P \times [0,1] \times [0,1] \to \Riem^+ (M), (p,s,t) \mapsto C(p,s,t)
\]
such that
\begin{enumerate}
\item $C$ is smooth in $t$-direction,
\item all derivatives in $M$- and $t$-direction are continuous,
\item for all $(p,t) \in P \times [0,1]$, we have $C(p,0,t)=G(p,0)$,
\item for all $(p,s) \in P \times [0,1]$, we have $C(p,s,0)= G(p,0)$, and $C(p,s,1)= G(p,s)$,
\item if $K \subset M$ is a codimension $0$ submanifold and $G(p,t)|_{K}$ is independent of $t$, then $C(p,s,t)|_K$ is independent of $s$ and $t$.
\end{enumerate}
Moreover, there is a continuous function $\Lambda: [0,1] \to (0, \infty]$ with $\Lambda (0)=\infty$, such that if $k: \bR \to [0,1]$ is a smooth function with $|k'|, |k''| \leq \Lambda (s)$, then the metric
\[
 dt^2 + C(p,s,k(t))
\]
on $\bR \times M$ has positive scalar curvature for all $p \in P$.
\end{lem}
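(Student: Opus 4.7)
The plan is to reparametrize $G$ in its second argument so that the resulting path becomes constant near the endpoints, and then smooth in the $t$-direction by convolution with a mollifier. Pick a smooth monotone $\phi\colon[0,1]\to[0,1]$ with $\phi\equiv 0$ on $[0,1/4]$ and $\phi\equiv 1$ on $[3/4,1]$, and set $\tilde G(p,s,t):=G(p,s\phi(t))$. Fix a smooth nonnegative even mollifier $\eta$ supported in $[-1,1]$ with $\int\eta=1$, let $\eta_\epsilon(t):=\epsilon^{-1}\eta(t/\epsilon)$, extend $\tilde G$ constantly beyond $[0,1]$ in the $t$-variable, and define, for an $\epsilon\in(0,1/8)$ to be chosen,
\[
C(p,s,t):=\int_\bR\eta_\epsilon(t-u)\,\tilde G(p,s,u)\,du.
\]

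Because $\tilde G(p,s,\cdot)$ is constant on $[0,1/4]$ (equal to $G(p,0)$) and on $[3/4,1]$ (equal to $G(p,s)$), convolution leaves these flat zones intact, which takes care of properties (3) and (4). Property (5) is likewise preserved since the convolution of a function constant in $u$ is constant. Smoothness in $t$ and joint continuity of all $M$- and $t$-derivatives in $(p,s,t)$ follow by differentiating under the integral, using that $G$ takes values in $C^\infty(M)$ with the Fr\'echet topology. For $\epsilon$ small enough, the uniform $C^2(M)$-modulus of continuity of $\tilde G(p,s,\cdot)$ in its third argument (finite by compactness of $P\times[0,1]^2$) forces $C(p,s,t)$ to stay $C^2$-close to $\tilde G(p,s,t)$, hence to remain in the open subset $\Riem^+(M)$. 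Fix such an $\epsilon$.

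For the existence of $\Lambda$, differentiating under the integral and subtracting off $\tilde G(p,s,t)\int\eta_\epsilon^{(j)}(u)\,du=0$ for $j\ge 1$ gives
\[
\partial_t^{\,j}C(p,s,t)=\int_\bR\eta_\epsilon^{(j)}(t-u)\bigl[\tilde G(p,s,u)-\tilde G(p,s,t)\bigr]du,
\]
and $\|\tilde G(p,s,u)-\tilde G(p,s,t)\|_{C^0(M)}=\|G(p,s\phi(u))-G(p,s\phi(t))\|_{C^0(M)}$ is bounded by the modulus of continuity of $G(p,\cdot)$ at $s\|\phi'\|_\infty|u-t|$, which tends to $0$ uniformly in $(p,t,u)$ as $s\to 0$. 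Hence the first two $t$-derivatives of $C(p,s,\cdot)$ tend to $0$ in $C^0(M)$ uniformly in $(p,t)$ as $s\to 0$. Applying Lemma \ref{lem:scalar-curvature-of-trace} to the smooth path $\tau\mapsto C(p,s,\tau)$, as in the proof of Lemma \ref{gajer-lemma}, then yields
\[
\scal(dt^2+C(p,s,k(t)))\ge\inf_{(p,\tau)}\scal(C(p,s,\tau))-Q(s)\bigl(\|k'\|_{C^0}+\|k''\|_{C^0}\bigr)
\]
with $Q(s)\to 0$ as $s\to 0$, while the first term stays uniformly positive for small $s$ by continuity of $G$. Extracting admissible bounds on $\|k'\|_{C^0},\|k''\|_{C^0}$ and interpolating continuously in $s$ produces a $\Lambda\colon[0,1]\to(0,\infty]$ with $\Lambda(0)=\infty$.

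The main obstacle is the boundary condition $\Lambda(0)=\infty$; it rests on the quantitative vanishing of the $t$-derivatives of $C(p,s,\cdot)$ as $s\to 0$, which in turn uses that $\tilde G(p,0,\cdot)\equiv G(p,0)$ is constant and that the fixed kernel $\eta_\epsilon$ has derivatives of bounded $L^1$-norm.
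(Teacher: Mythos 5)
Your proof is correct and follows essentially the same strategy as the paper's: smooth $G(p,s\,\cdot)$ in the $t$-variable, use openness of $\Riem^+(M)$ to keep the smoothed family in $\Riem^+(M)$ for a fine enough smoothing, and obtain $\Lambda(0)=\infty$ from the fact that the $t$-derivatives of $C(p,s,\cdot)$ vanish at $s=0$, fed through Lemma \ref{lem:scalar-curvature-of-trace}. The only real difference is the smoothing device: the paper convexly combines samples $G(p,s\,i/n)$ using a finite partition of unity $(\lambda_{ni})$ of $[0,1]$, whereas you first reparametrize by a cutoff $\phi$ and then mollify by convolution; your explicit identity $\partial_t^{\,j}C(p,s,t)=\int\eta_\epsilon^{(j)}(t-u)\bigl[\tilde G(p,s,u)-\tilde G(p,s,t)\bigr]\,du$ gives a pleasantly concrete modulus-of-continuity bound for the decay of the $t$-derivatives as $s\to 0$, which the paper leaves implicit in the sentence ``the existence of $\Lambda$ follows from the properties of $C$''.
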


\begin{proof}
To construct $C$, let $U_{ni}= (\frac{i-1}{n}, \frac{i+1}{n})  \cap [0,1]$ and let $\cU_n$ be the open cover $(U_{ni})_{i=0, \ldots,n}$. 
Let $(\lambda_{ni})_{i=0,\ldots, n}$ be a subordinate smooth partition of unity. Define
\[
C_n (p,s,t):= \sum_{i=0}^n G(p,s \frac{i}{n}) \lambda_{ni} (t) \in \Riem (M).
\]
This has all the desired properties, except that the scalar curvature of $C_n (p,s,t)$ is not necessarily positive. A routine application of compactness proves that $\lim_{n \to \infty} C_n (p,s,t)= G(p,st)$, uniformly in all variables (the target space has the Fr\'echet topology, as usual). Since $\Riem^+ (M) \subset \Riem (M)$ is open, we find that for sufficiently large $n$, the scalar curvature of $C_n (p,s,t)$ is positive for all $p,s,t$. Define $C:= C_n$ for such an $n$.  

The existence of the function $\Lambda$ with the asserted property follows from the properties of $C$, from the compactness of $P$ and from Lemma \ref{lem:scalar-curvature-of-trace}.
\end{proof}

\begin{proof}[Proof of Theorem \ref{thm:improved-chernysh-theorem}]
Let $P$ be a disc and consider a lifting problem
\begin{equation}\label{chernysh-theorem-proof-liftingproblem}
\xymatrix{
P \times 0 \ar[d] \ar[r]^{F} & \Riem^+ (W) \ar[d]^{\res}\\
P \times [0,1] \ar[r]^{G} \ar@{..>}[ur]^{H} & \Riem^+ (M).
}
\end{equation}
Since $P$ is compact, we find $\delta>0$ such that $F (P \times 0) \subset \Riem^+ (W)^{2 \delta}$, by the observation made in the proof of Lemma \ref{lem:cont-bijection-equivalence}. 
We will construct a continuous map $K: P \times [0,1] \to \Riem^+ ([0,\delta] \times M)$ with the properties that 
\begin{enumerate}
 \item $K(p,s)= G(p,s)$ near $0 \times M$,
 \item $K(p,s)= G(p,0)$ near $\delta \times M$.
\end{enumerate}
Then define $H(p,s) \in \Riem^+ (W)$ to be equal to $K(p,s)$ on $[0, \delta] \times M$ and equal to $F(p)$ on $W \setminus ([0, \delta] \times M$. This is a solution to the lifting problem \eqref{chernysh-theorem-proof-liftingproblem}.

Let $C: P \times [0,1] \times [0,1] \to \Riem^+ (M)$ and $\Lambda: [0,1] \to (0,\infty]$ be as in Lemma \ref{lem:chernysh-thm-proof-approximation}. Now let 
\[
  a(s):= \max (\sqrt{\frac{3}{\Lambda(s)}}, \frac{3}{\Lambda(s)} ) +1 ,
\]
\[
 b:= \sup_s a(s),
\]
and fix a smooth function $f: \bR \to [0,1]$ such that 
\begin{enumerate}
 \item $f=0$ near $(-\infty,0]$, $f=1$ near $[1,\infty)$,
 \item $|f'|, |f''| \leq 3$.
\end{enumerate}
With these choices, the Riemannian metric 
\begin{equation}\label{chernyshtheorem-formula}
L (p,s):= dt^2 + C(p,s,f(\frac{t}{a(s)}))
\end{equation}
on $\bR \times M$ has positive scalar curvature, is cylindrical near $(-\infty,0] \times M$ and near $[b,\infty)\times M$ and lies in $\Riem^+ ([0,b] \times M)_{G(p,0), G(p,s)}$. Choose a diffeomorphism $h: [0, \delta] \to [0,b]$ such that $h'>0$, $h'=1$ near $0$ and $\delta$. Then $(p,s) \mapsto K(p,s) := (h \times \id_M)^* L(p,s)$ is the desired family of psc metrics on $[0,\delta] \times M$.
\end{proof}

\subsection{Proof of Theorem \ref{thm:main-boundary}}\label{sec:maintheoremboundary}

The proof uses an auxiliary construction. For $r\geq 0$, we let $W_r:= W \cup (M \times [0,r])$ be the result of gluing an external collar of length $r$ to $W$ and define $N_r$ analogously. We extend the metric $g_N$ on $N$ to one denoted $g_{N,r}$ on $N_r$ cylindrically. The embedding $\varphi: N\times\bR^k \to W$ gets extended in the obvious way to an embedding $\varphi_r: N_r \times \bR^{k} \to W_r$. We let $\Riem^+ (W_r,\varphi_r) \subset \Riem^+ (W_r)$ be the space of psc metrics which are of the form $g_{N,r}+g_\torp^k$ on $\varphi_r (N_r \times B_R^k)$. Extending psc metrics cylindrically over $M \times [0,r]$ defines maps $\Riem^+ (W_r) \to \Riem^+ (W_s)$ and $\Riem^+ (W_r,\varphi_r) \to \Riem^+ (W_s,\varphi_s)$ for $s >r$. Restriction to the boundary of $W_r$ defines restriction maps $\res': \Riem^+ (W_r) \to \Riem^+ (M)$ and $\Riem^+ (W_r,\varphi_r) \to \Riem^+ (M,\partial \varphi)$ which are compatible. In the colimit, we obtain a commutative diagram
\begin{equation}\label{diag:proof-of-bounded-cases}
\xymatrix{
\colim_{r \to \infty} \Riem^+ (W_r,\varphi_r) \ar[d]^{\colim_{r \to \infty }\res'} \ar[r] & \colim_{r \to \infty} \Riem^+ (W_r) \ar[d]^{\colim_{r \to \infty }\res'}\\
\Riem^+ (M,\partial \varphi) \ar[r] & \Riem^+ (M).
}
\end{equation}
\begin{lem}\label{lem:proof-of-oundedcases}
The left vertical map in \eqref{diag:proof-of-bounded-cases} is a Serre fibration. 
\end{lem}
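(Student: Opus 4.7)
My plan is to imitate the proof of Theorem \ref{thm:improved-chernysh-theorem} that was given above, adapting it so that the extra ``torpedo form'' constraint imposed by membership in $\Riem^+(W_r,\varphi_r)$ is preserved throughout. Concretely, consider a lifting problem
\[
\xymatrix{
P \times 0 \ar[d] \ar[r]^{F} & \colim_r \Riem^+ (W_r,\varphi_r) \ar[d]^{\colim_r \res'}\\
P \times [0,1] \ar[r]^-{G} \ar@{..>}[ur]^{H} & \Riem^+ (M,\partial \varphi)
}
\]
with $P$ a disc. Since $P$ is compact and the defining maps $\Riem^+(W_r,\varphi_r) \to \Riem^+(W_s,\varphi_s)$ are closed embeddings (they are restrictions of the closed embeddings $\Riem^+(W_r) \to \Riem^+(W_s)$), the argument used in Lemma \ref{two-topologies-on-fibnr4es} shows that $F$ factors as $F_0 : P \to \Riem^+(W_r,\varphi_r)$ for some fixed $r$, and it suffices to produce a lift with values in $\Riem^+(W_{r+\delta},\varphi_{r+\delta})$ for some $\delta>0$.

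Next I would apply Lemma \ref{lem:chernysh-thm-proof-approximation} to the family $G: P \times [0,1] \to \Riem^+(M)$ (here I view $G$ as landing in $\Riem^+(M)$ via the forgetful inclusion). This produces $C: P \times [0,1] \times [0,1] \to \Riem^+(M)$ and $\Lambda: [0,1] \to (0,\infty]$ with the properties listed there. The crucial point is property (5) of that Lemma: since $G(p,s)|_{\partial\varphi(\partial N \times B^k_R)} = g_{\partial N}+g^k_{\torp}$ is independent of $(p,s)$ by the definition of $\Riem^+(M,\partial\varphi)$, it follows that $C(p,s,t)|_{\partial\varphi(\partial N \times B^k_R)} = g_{\partial N}+g^k_{\torp}$ for all $(p,s,t)$. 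With the same choices of $a(s)$, $b$ and $f$ as in the proof of Theorem \ref{thm:improved-chernysh-theorem}, the metric
\[
L(p,s) := dt^2 + C(p,s,f(t/a(s)))
\]
on $\bR \times M$ has positive scalar curvature, is cylindrical near $(-\infty,0]\times M$ and near $[b,\infty)\times M$, and restricts to $G(p,0)$ and $G(p,s)$ on the two ends.

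The essential new observation is that on $\bR \times \partial\varphi(\partial N \times B^k_R)$ this metric equals $dt^2 + g_{\partial N}+g^k_{\torp}$, which is precisely the shape required so that, after an order-preserving rescaling $h : [0,\delta] \to [0,b]$ with $h'=1$ near the endpoints, the family $K(p,s) := (h\times \id_M)^* L(p,s)$ on $[0,\delta]\times M$ glues smoothly to $F_0(p)$ across $M \times \{r\}$ to produce a continuous family
\[
H(p,s) \in \Riem^+(W_{r+\delta},\varphi_{r+\delta}).
\]
Indeed, on the new cylindrical piece $\partial N \times [r,r+\delta] \times B^k_R$ the glued metric has the form $g_{\partial N} + dt^2 + g^k_{\torp}$, which is exactly the restriction of $g_{N,r+\delta} + g^k_{\torp}$; on the original $\varphi_r(N_r\times B^k_R)$ the metric is inherited from $F_0(p)$ and already has torpedo form. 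Composition with the canonical map to the colimit yields the desired lift $H$, proving that $\colim_r \res'$ is a Serre fibration.

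I expect the only real subtlety is the bookkeeping at the gluing locus $M \times \{r\}$: one must verify that property (4) of Lemma \ref{lem:chernysh-thm-proof-approximation} ($C(p,s,0)=G(p,0)$) combined with $L(p,s)$ being cylindrical near $(-\infty,0]\times M$ produces a smooth match with the cylindrical end of $F_0(p)$ near its boundary (for which one mildly stretches the collar of $F_0(p)$ beforehand, as in the proof of Theorem \ref{thm:improved-chernysh-theorem}); the torpedo-form condition then glues automatically by the constancy of $C(p,s,t)$ on $\partial\varphi(\partial N\times B^k_R)$. Everything else is a direct transcription of the fibration argument already given.
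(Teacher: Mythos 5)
Your proof is correct and follows essentially the same route as the paper: factor $F$ through some fixed $\Riem^+(W_r,\varphi_r)$ by compactness, apply Lemma \ref{lem:chernysh-thm-proof-approximation} (noting, as you correctly emphasize, that property (5) forces $C(p,s,t)$ to keep the torpedo form on $\partial\varphi(\partial N\times B_R^k)$ since $G(p,s)$ is constant there), form $L(p,s)$ by \eqref{chernyshtheorem-formula}, and glue it onto $F(p)$ along the external collar $M\times[r,r+b]$. The only cosmetic difference is that you insert an extra reparametrization $h:[0,\delta]\to[0,b]$ and a ``collar stretching'' step; since here the collar is glued on externally (rather than inserted into a pre-existing interior collar as in the proof of Theorem \ref{thm:improved-chernysh-theorem}), this is unnecessary — one can simply land in $\Riem^+(W_{r+b},\varphi_{r+b})$, which is what the paper does.
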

Let us postpone the proof of Lemma \ref{lem:proof-of-oundedcases} for the moment, and explain how to finish the proof of Theorem \ref{thm:main-boundary}.

\begin{proof}[Proof of Theorem \ref{thm:main-boundary}]
The bottom horizontal map in \eqref{diag:proof-of-bounded-cases} is a weak equivalence, by Theorem \ref{thm:main}. The inclusion maps $\Riem^+ (W_r)\to \Riem^+ (W_s)$ are weak homotopy equivalences, and hence so is the inclusion map $\Riem^+ (W) \to \colim_{r\to \infty} \Riem^+ (W_r)$. This is proven in the same way as the elementary \cite[Lemma 2.1 and Corollary 2.3]{BERW}.

Each individual map $\Riem^+ (W_r,\varphi_r)\to \Riem^+ (W_r)$ is a weak equivalence, by Theorem \ref{thm:main} and hence so is the top horizontal map in \eqref{diag:proof-of-bounded-cases}. It follows that the inclusion $\Riem^+ (W,\varphi) \to \colim_{r \to \infty} \Riem^+ (W_r,\varphi_r)$ is a weak equivalence. 

The right vertical map in \eqref{diag:proof-of-bounded-cases} is a Serre fibration. This follows easily from Theorem \ref{thm:improved-chernysh-theorem} and a colimit argument.

Together with Lemma \ref{lem:proof-of-oundedcases}, it follows that the induced map on fibres is a weak equivalence. Over $g \in \Riem^+ (M,\partial \varphi)$, this is the bottom map of the diagram
\begin{equation}\label{diag:proof-of-bounded-cases2}
\xymatrix{
\Riem^+ (W,\varphi)_g \ar[r]\ar[d] & \Riem^+ (W)_g \ar[d]\\
\colim_{r \to \infty} \Riem^+ (W_r,\varphi_r)_g \ar[r] & \colim_{r \to \infty} \Riem^+ (W_r)_g,
}
\end{equation}
and we want to know that the top map is a weak equivalence. Thus to conclude the proof of Theorem \ref{thm:main-boundary}, it enough to prove that the two vertical maps in \eqref{diag:proof-of-bounded-cases2} are weak equivalences. For the right hand vertical map, this follows immediately from \cite[Lemma 2.1]{BERW}, and for the left hand side map, we use \cite[Corollary 2.5.4]{ERW17} (whose proof is elementary, but slightly lengthy). 
\end{proof}

\begin{proof}[Proof of Lemma \ref{lem:proof-of-oundedcases}]
This is similar to, but easier than the proof of Theorem \ref{thm:improved-chernysh-theorem}. Let $P$ be a disc and consider a lifting problem
\[
\xymatrix{
P \times 0 \ar[d] \ar[r]^-{F} & \colim_{r \to \infty} \Riem^+ (W_r,\varphi_r) \ar[d]^{\colim_{r \to \infty }\res'} \\
P \times [0,1] \ar[r]^-{G} \ar@{..>}[ur]^{H} & \Riem^+ (M,\partial \varphi) .
}
\]
By compactness of $P$, there is $r\geq 0$ so that $F(P \times 0) \subset \Riem^+ (W_r,\varphi_r)$. Define $L(p,s)$ by the formula \eqref{chernyshtheorem-formula} and let $b$ as in loc.cit. Define $H(p,s) \in \Riem^+ (W_{r+b},\varphi_{r+b})$ to be equal to $L(P,s)$ on $M \times [r,r+b]$ and equal to $F(p,s)$ on $W_r$. This has all the desired properties.
\end{proof}

\section{Cobordism invariance of the space of psc metrics}\label{sec:coboridmsection}

This section is devoted to the proof of Theorem \ref{thm:bordismapplication}. 

\begin{defn}
Let $M_0$ and $M_1$ be closed $(d-1)$-manifolds and let $W:M_0 \leadsto M_1$ be a cobordism. We say that $W$ has \emph{handle type $[k,l]$} if there exists a handlebody decomposition of $W$ relative to $M_0$ all whose handles have index in $\{k, k+1,\ldots,l-1,l\}$.  
\end{defn}

The first step in the proof of Theorem \ref{thm:bordismapplication} is the following corollary of Theorem \ref{thm:main}.

\begin{cor}[to Theorem \ref{thm:main}]\label{cor:handletype-weak}
Let $M_0$ and $M_1$ be two closed $(d-1)$-manifolds and assume that there is a cobordism $W:M_0 \leadsto M_1$ of handle type $[0,d-3]$. Then there exists a map $\Riem^+ (M_0) \to \Riem^+ (M_1)$, which can be chosen to be a weak homotopy equivalence if the handle type of $W$ is $[3,d-3]$.
\end{cor}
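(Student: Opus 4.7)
The plan is to decompose $W$ into a composition of elementary cobordisms, each attaching a single handle of some index $k \in \{0, 1, \ldots, d-3\}$, and to produce a map $\Riem^+(N_{i-1}) \to \Riem^+(N_i)$ at each step. Concretely, write $W = W_n \circ \cdots \circ W_1$ relative to $M_0$, where $W_i\colon N_{i-1} \leadsto N_i$ attaches a single $k_i$-handle and $N_0 = M_0$, $N_n = M_1$. It then suffices to produce the individual step maps and compose.

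For a $k_i$-handle with $1 \leq k_i \leq d-3$, passing from $N_{i-1}$ to $N_i$ is exactly the surgery on $N_{i-1}$ along an embedded $S^{k_i - 1}$ with a trivialized normal bundle $\bR^{d - k_i}$, so the Chernysh codimension is $d - k_i \geq 3$. Theorem \ref{thm:main} and the discussion in Remark \ref{rem:surgery.invariance} then produce the zigzag
\[
\Riem^+(N_{i-1}) \xleftarrow{\simeq} \Riem^+(N_{i-1}, \varphi_i) \xrightarrow{\cong} \Riem^+(N_i, \varphi_i') \longrightarrow \Riem^+(N_i),
\]
whose leftmost arrow is a weak homotopy equivalence. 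Since each of the spaces involved has the homotopy type of a CW complex (see \S2.1), the weak equivalence admits a homotopy inverse, and composing the three arrows gives the desired map. For $k_i = 0$ the surgery adds a disjoint $d$-ball, so $N_i = N_{i-1} \sqcup S^{d-1}$, and I would simply define the step map by $g \mapsto g \sqcup g_{\mathrm{round}}$ for a fixed round psc metric on $S^{d-1}$. Concatenating all $n$ steps produces the claimed map $\Riem^+(M_0) \to \Riem^+(M_1)$.

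For the second statement, suppose every handle has index $k_i \in [3, d-3]$. Then both $d - k_i \geq 3$ and $k_i \geq 3$ hold, which is precisely the codimension hypothesis needed for the opposite surgery datum $\varphi_i'$. Applying Theorem \ref{thm:main} a second time, now to $\varphi_i'$, shows that the rightmost arrow in the zigzag is also a weak homotopy equivalence. Hence each step map $\Riem^+(N_{i-1}) \to \Riem^+(N_i)$ is a weak homotopy equivalence, and so is their composition. Note that the hypothesis $k_i \geq 3$ also forbids index-$0$ and index-$1$ handles, so the $k=0$ ad hoc construction is not invoked here.

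The argument is essentially bookkeeping on top of Theorem \ref{thm:main} and I do not expect any substantive obstacle. The only point requiring a moment of care is the conversion of the zigzag into an honest map of spaces rather than only a morphism in the homotopy category; this is where the Palais--Hatcher fact that $\Riem^+$ has CW-homotopy type is essential.
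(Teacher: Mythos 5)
Your proof is correct and follows the same route as the paper's: decompose $W$ into elementary cobordisms, apply the Chernysh surgery zigzag $\Riem^+(N_{i-1}) \leftarrow \Riem^+(N_{i-1},\varphi_i) \to \Riem^+(N_i)$ at each step, invert the leftward weak equivalence using the CW homotopy type of the spaces of psc metrics, and note that for handle type $[3,d-3]$ the rightward arrow is also a weak equivalence. Your separate treatment of index-$0$ handles agrees with what the paper's zigzag degenerates to when $N = S^{-1} = \emptyset$ (there $\Riem^+(M,\varphi) = \Riem^+(M)$ and the induced map is exactly $g \mapsto g \sqcup g_{\mathrm{round}}$), so the two arguments coincide there as well.
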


\begin{proof}
Let $M$ be a closed $(d-1)$-manifold and let $\phi: S^{m-1} \times \bR^{d-m} \to M$ be an embedding. The result of a surgery on $M$ is $M_\phi:= M \setminus (\phi(S^{m-1} \times D^{d-m}) \cup D^m \times S^{d-m-1}$. The opposite embedding of $\phi$ is $\phi^{op}: \bR^m \times S^{d-m-1} \to M_\phi$, and we obtain $M$ back from $M_\phi$ by a surgery on $\phi^{op}$. There is a zigzag of maps
\[
 \Riem^+ (M) \leftarrow \Riem^+ (M,\phi) \cong \Riem^+ (M_\phi;\phi^{op}) \to \Riem^+ (M_\phi).
\]
By Theorem \ref{thm:main}, the arrow pointing to the left is a weak equivalence if $m \leq d-3$. By inverting the arrow up to homotopy, we obtain a map $\Riem^+ (M) \to \Riem^+ (M_\phi)$. If $m \geq 3$, then the arrow pointing to the right is a weak equivalence, and so is the map $\Riem^+ (M) \to \Riem^+ (M_\phi)$. 

If $W$ has handle type $[0,d-3]$, we can obtain $M_1$ from $M_0$ by a sequence of surgeries on embedded copies of $S^{m-1} \times \bR^{d-m}$ with $d-m \geq 3$, which gives the map, and if the handle type is $[3,d-3]$, this map is a weak homotopy equivalence.
\end{proof}

Because of Corollary \ref{cor:handletype-weak}, Theorem \ref{thm:bordismapplication} follows from the next two results.

\begin{prop}\label{prop:get2-connectedcobordism}
Let $\theta: B \to BO(d)$ be a fibration and let $M_0$, $M_1$ be two closed $(d-1)$-dimensional $\theta$-manifolds such that the structure maps $M_i \to B$ are $2$-connected. Assume that $d \geq 6$ and that there is a $\theta$-cobordism $W:M_0 \leadsto M_1$. Then there exists a $\theta$-cobordism $W': M_0 \leadsto M_1$ such that both inclusions $M_i \to W'$ are $2$-connected.
\end{prop}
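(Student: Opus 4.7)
The plan is to produce $W'$ from $W$ by a finite sequence of $\theta$-compatible surgeries performed entirely in the interior, arranged so that the resulting structure map $W' \to B$ becomes a $\pi_j$-isomorphism for every $j \leq 2$. The desired 2-connectedness of both inclusions $M_i \hookrightarrow W'$ will then be a formal consequence.

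For this formal consequence: all surgeries being interior, $\partial W' = M_0 \sqcup M_1$ carries the original $\theta$-structure, and each $M_i \to W' \to B$ factorizes the given structure map $M_i \to B$. For $j = 0, 1$, both $M_i \to B$ (by hypothesis) and $W' \to B$ (by construction) are $\pi_j$-isomorphisms, hence so is $M_i \to W'$. For $j = 2$, $\pi_2(M_i) \to \pi_2(B)$ is surjective while $\pi_2(W') \to \pi_2(B)$ is an isomorphism, forcing $\pi_2(M_i) \to \pi_2(W')$ to be surjective. Thus each inclusion $M_i \hookrightarrow W'$ is 2-connected.

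To construct $W'$, I would follow the classical strategy of surgery below the middle dimension, in Kreck's framework. After possibly connecting components by surgery on embedded copies of $S^0$ we may assume $W$ is connected. Then $\pi_1(W) \to \pi_1(B)$ is surjective because it factors the isomorphism $\pi_1(M_0) \cong \pi_1(B)$, and its (finitely normally generated) kernel is killed by surgery along embedded loops $\gamma \subset \mathrm{int}(W)$: since $[\gamma]$ is null-homotopic in $B$, the $\theta$-structure on $W$ together with a null-homotopy of $\gamma$ in $B$ produces a $\theta$-framing of the normal bundle of $\gamma$, and the surgery is $\theta$-compatible. Analogously, $\pi_2(W) \to \pi_2(B)$ is surjective for the same factorization reason, and its kernel (finitely generated as a $\bZ[\pi_1]$-module) is killed by $\theta$-framed surgery along embedded 2-spheres. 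The embedding requires $d \geq 5$, and the hypothesis $d \geq 6$ is exactly what ensures that the dual sphere $S^{d-3}$ of each such surgery has dimension at least $3$, so that the surgery does not reintroduce $\pi_2$-classes.

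The main obstacle is the bookkeeping of the $\theta$-structure during the $\pi_2$-killing step: one must verify that every generator of $\ker(\pi_2(W) \to \pi_2(B))$ is represented by an embedded $S^2$ with trivial normal bundle admitting a framing compatible with both the $\theta$-structure on $W$ and a chosen null-homotopy in $B$. This is the standard content of Kreck's lemmas on surgery below the middle dimension, and the hypothesis $d \geq 6$ is precisely what makes these lemmas applicable at the $\pi_2$ stage; given these, the construction is a routine finite iteration of the surgeries outlined above.
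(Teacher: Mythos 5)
There is a genuine gap in the $\pi_2$-step. You assert that $\ker(\pi_2(W)\to\pi_2(B))$ is finitely generated as a $\bZ[\pi_1]$-module and that killing it by $\theta$-framed surgery makes $W'\to B$ a $\pi_2$-isomorphism. This is exactly the point the paper's proof is designed to avoid: that kernel need \emph{not} be finitely generated over the group ring. The hypotheses only give that $B$ is of Wall's type $(F_2)$, not $(F_3)$; compact manifolds (and indeed finite $2$-complexes thickened to compact manifolds with boundary, e.g.\ Stallings' $F_2$-not-$F_3$ group) can have $\pi_2$ that is not finitely generated over $\bZ[\pi_1]$, so there is no reason for the relevant kernel to be finitely generated, and hence no guarantee that finitely many surgeries suffice to make $\pi_2(W')\to\pi_2(B)$ injective. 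Your ``formal consequence'' is logically fine, but the hypothesis it requires (an isomorphism on $\pi_2$) is precisely what cannot be produced.

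The paper's route sidesteps this by aiming at the weaker, achievable target: it suffices to kill $\pi_2(W,M_i)$, which by Wall \cite[\S 1]{WallFin} \emph{is} a finitely generated $\bZ[\pi]$-module because $(W,M_i)$ is a compact relative handlebody. One first arranges $W\to B$ to be $2$-connected (so $\pi_1(M_i)\to\pi_1(W)$ are isomorphisms), then uses the long exact sequence of $(W,M_0)$ and surjectivity of $\pi_2(M_0)\to\pi_2(B)$ to do a diagram chase: starting from finitely many elements of $\pi_2(W)$ whose images generate $\pi_2(W,M_0)$, one corrects them by elements coming from $\pi_2(M_0)$ to force them into $\ker(\pi_2(W)\to\pi_2(B))$, which provides the $\theta$-framing needed to do the surgeries. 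This does not make $\pi_2(W')\to\pi_2(B)$ injective; it only makes $\pi_2(M_0)\to\pi_2(W')$ surjective, which is what is actually needed. You should also address a second point that your proposal omits: after arranging $M_0\to W'$ to be $2$-connected, the surgeries needed for $M_1$ could in principle damage that property. The paper handles this via general position, noting that removing the surgery spheres changes $W$ by a $(d-3)\geq 3$-connected map and gluing in the new handles changes it by a $2$-connected map, so the $2$-connectivity of $M_0\to W'$ survives.
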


\begin{prop}\label{prop:handletrading}
Let $W: M_0 \leadsto M_1$ be a $d$-dimensional cobordism between closed manifolds such that the inclusions $M_i \to W$ are $2$-connected. Then
\begin{enumerate} 
\item if $d \geq 7$, $W$ has handle type $[3,d-3]$,
\item if $d=6$, then there is $r$ such that the connected sum $W\sharp^r (S^3 \times S^3)$ with sufficiently many copies of $S^3 \times S^3$ has handle type $[3,3]$. 
\end{enumerate}
\end{prop}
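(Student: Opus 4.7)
The plan is to simplify a chosen handle decomposition of $W$ relative to $M_0$ by iterated handle cancellation and handle trading. The $2$-connectivity of the inclusions $M_i \hookrightarrow W$ provides precisely the algebraic input needed to eliminate all handles of index outside $[3,d-3]$ from both ends; the classical reference is \cite{Kos}.

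First I would cancel all $0$-handles against $1$-handles: since $W$ is connected and $\pi_0(M_0)\to\pi_0(W)$ is bijective, every $0$-handle can be joined to $M_0$ by an arc passing through a $1$-handle which then cancels it. Next, I would trade each $1$-handle for a $3$-handle. The cocore of a $1$-handle defines a loop in $W$ relative to $M_0$; because $\pi_1(W,M_0)=0$ it bounds an embedded disc, which (after introducing a cancelling $2$/$3$-handle pair) allows the new $2$-handle to absorb the original $1$-handle, leaving only a $3$-handle. Generic embeddedness of the spanning disc requires $d\geq 5$. Analogously, $\pi_2(W,M_0)=0$ allows each $2$-handle to be traded for a $4$-handle via a $3$/$4$-handle pair, and here generic disjoint embedding of the relevant $3$-sphere requires $d\geq 6$. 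After these steps, the decomposition has handles only in indices $\{3,4,\ldots,d\}$. Applying the same procedure to the upside-down cobordism, using the $2$-connectivity of $M_1\hookrightarrow W$, eliminates all handles of index greater than $d-3$.

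If $d\geq 7$, the handles introduced during the trading (of indices $3$, $4$, $d-4$, $d-3$) all lie in $[3,d-3]$, so the two rounds of trading are compatible and the process terminates, proving (1). For $d=6$, however, a $4$-handle produced by the first round of trading becomes an index-$2$ handle from the $M_1$ side, and trading that away upside down reintroduces an index-$2$ handle from the $M_0$ side; the two procedures oscillate against each other in the middle dimension and never terminate.

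This middle-dimensional deadlock is the main obstacle, and it is exactly where stabilisation with $\sharp^r(S^3\times S^3)$ is used to prove (2). After the first round of trading we have a decomposition of $W$ with only $3$- and $4$-handles; the obstruction to pairing them and cancelling them geometrically is encoded in the intersection form on the middle level, a simply connected $5$-manifold (simple connectivity coming from the $2$-connectivity hypothesis, and also ensuring that Whitney discs in the ambient $6$-manifold can be embedded). Taking the interior connect sum with one copy of $S^3\times S^3$ introduces a hyperbolic summand to this form, and after sufficiently many stabilisations one can pick an algebraic basis in which every $4$-handle is cancelled by some $3$-handle; realising such a basis geometrically by the Whitney trick produces honest $3$/$4$-cancelling pairs and yields a decomposition with $3$-handles only. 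The number $r$ of stabilisations needed is controlled by the rank of the relevant middle-dimensional surgery kernel and is therefore finite.
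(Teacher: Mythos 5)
Your overall strategy matches the paper's: for part (1), handle trading (the paper just cites Kervaire and Wall), and for part (2), stabilisation by $S^3\times S^3$ (the paper cites GRW, Lemma 6.21, which goes back to Kreck). The handle-trading bookkeeping for $d\geq 7$ is fine. However, your argument for part (2) contains a genuine error.

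You claim that the middle level is ``a simply connected $5$-manifold (simple connectivity coming from the $2$-connectivity hypothesis, and also ensuring that Whitney discs in the ambient $6$-manifold can be embedded).'' This is false: $2$-connectivity of the inclusions $M_i\hookrightarrow W$ means that $\pi_i(W,M_i)=0$ for $i\le 2$, i.e.\ the inclusions induce isomorphisms on $\pi_0$ and $\pi_1$ and a surjection on $\pi_2$. It does \emph{not} imply $\pi_1(W)=0$. Indeed, the paper explicitly flags this: a $6$-dimensional $h$-cobordism with nontrivial Whitehead torsion satisfies all the hypotheses but does \emph{not} have handle type $[3,3]$ without stabilisation --- and Whitehead torsion lives in $Wh(\pi_1)$, which vanishes when $\pi_1=0$. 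So the non-simply-connected case is exactly the hard part you are discarding. In that setting one must work with the equivariant intersection form over $\mathbb{Z}[\pi_1 W]$ on the middle-dimensional surgery kernel and use the $\pi$--$\pi$ condition, and the Whitney trick for $2$- and $3$-spheres inside a $5$-manifold requires $\pi_1$-control of the complement rather than plain simple connectivity. This is precisely the content of the cited GRW/Kreck lemma (for any $2n\ge 6$ with both inclusions $(n-1)$-connected, $W\sharp^r(S^n\times S^n)$ has handle type $[n,n]$ for $r$ large), and your sketch does not reconstruct it; it only covers the case $\pi_1=1$, where part (2) would already follow from the simply-connected $h$-cobordism theorem without any need for stabilisation.

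A smaller point: after trading from both ends of a $6$-cobordism you should check you land in indices $\{3,4\}$ (trading $0,1,2$ up gives $\{3,\dots,6\}$; trading $6,5$ down gives $\{3,4\}$, and trading the $4$-handles would reintroduce $2$-handles). Your narrative jumps to ``only $3$- and $4$-handles'' without spelling this out, but the bookkeeping does work; the substantive gap is the fundamental-group issue above.
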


In both Propositions, the case $B = B \Spin $ follows quickly from the proof of the $h$-cobordism theorem for simply connected manifolds, which has found its way into textbooks \cite{Milnor}, \cite{Kos}, and is described in \cite{WalshC}. The general case requires techniques from the proof of the $s$-cobordism theorem, which are not so well-known. Therefore, we include the proof here, for sake of completeness.

\begin{proof}[Proof of Proposition \ref{prop:get2-connectedcobordism}]
To simplify the notation, we assume that $B$ is $0$-connected. 
If we could assume that the space is of type $(F_3)$ \cite{WallFin}, we can perform $\theta$-surgeries in the interior of $W$, giving a cobordism $W'$ such that the structure map $W' \to B$ is $3$-connected. It follows that the inclusion maps $M_i \to W$ are both $2$-connected. However, the assumptions of the proposition only imply that $B$ is of type $(F_2)$, and an additional argument is required (which goes back to \cite[Theorem 2.2]{RosNovII} and is explained also in \cite{HebJoa}).
By $\theta$-surgeries in the interior of $W$, we can replace $W$ by a cobordism $W'$ so that $W' \to B$ is $2$-connected. Hence we can assume, without loss of generality, that $W\to B$ is $2$-connected. This condition implies that both inclusions $M_i \to W$ induce isomorphisms on fundamental groups. Let $\pi$ be the common fundamental group. The long exact homotopy sequence of the pair $(W,M_0)$ gives the diagram
\[
 \xymatrix{
 \pi_2 (M_0)\ar[r] \ar@{->>}[dr] & \ar[d]^{(\ell_W)_\ast}\pi_2 (W) \ar[r] & \pi_2 (W,M_0)\ar[r] & 0 \\
  & \pi_2 (B) & & 
 }
\]
Now $\pi_2 (W,M_0)$ is a finitely generated $\bZ[\pi]$-module, by \cite[\S 1]{WallFin}. Using that the structure map $M_0 \to B$ is $2$-connected, a diagram chase shows that there are elements $x_1, \ldots,x_r \in \pi_2 (W)$ whose image in $\pi_2 (W,M_0)$ generate $\pi_2(W,M_0)$ as a $\bZ[\pi]$-module and which lie in the kernel of $(\ell_W)_\ast:\pi_2 (W) \to \pi_2 (B)$. 

Because $d \geq 5$, we can represent $x_i$ by an embedded $2$-sphere. Because $(\ell_W)_\ast (x_i)=0$, it follows that the normal bundle of those embeddings is trivial, and that $x_i$ is represented by an embedding $S^{2} \times \bR^{d-2} \to W$. We may assume that those embeddings are disjoint, by general position. We can perform $\theta$-surgeries on those spheres, and obtain a new $\theta$-cobordism $W'$ so that $M_0 \to W'$ is $2$-connected. Rename $W:=W'$.

To make the inclusion of $M_1$ $2$-connected as well, we use the same argument with $M_0$ replaced by $M_1$, but we have to be careful in order to not destroy the $2$-connectivity of $M_0 \to W$. For an embedding $f: \coprod^r S^2 \times D^{d-2} \to W$, let $W^{\circ}:= W \setminus \im (f)^{\circ}$ and $W'= W^{\circ} \cup_{\partial\im (f)} (\coprod^r D^3 \times S^{d-3})$. By general position, the inclusion $W^{\circ} \to W'$ is $2$-connected, and $W^{\circ} \to W$ is $(d-3) \geq 3$-connected (here we use that $d \geq 6$). The diagram
\[
 \xymatrix{
 W & \ar[l]_{3-\text{co.}} W^{\circ} \ar[r]^{2-\text{co.}} & W'\\
 & M_0\ar[ul]^{2-\text{co.}} \ar[u] \ar[ur]
 }
\]
shows that $M_0 \to W^{\circ}$ is $2$-connected, and so is $M_0 \to W'$
\end{proof}


\begin{proof}[Proof of Proposition \ref{prop:handletrading}]
Part (1) follows quickly from handle trading \cite{Kerv}, \cite{Wall}. If $d=6$, handle trading implies that $W$ is of handle type $[2,3]$. But handle trading is not enough: we can only barter the $2$-handles for $4$-handles. In fact, it is not correct that a $6$-dimensional cobordism $W:M_0 \leadsto M_1$ with both inclusions $2$-connected has handle type $[3,3]$: look at a $6$-dimensional $h$-cobordism with nontrivial Whitehead torsion to see why.
To deal with part (2), we invoke the following result, whose proof is contained in the proof of \cite[Lemma 6.21]{GRW} (and which has its origins in \cite{Kreck}). Let $W: M_0 \leadsto M_1$ be a cobordism of dimension $2n \geq 6$ such that both inclusions $M_i \to W$ are $(n-1)$-connected. Then for sufficiently large $r$, the cobordism $W \sharp^r (S^n \times S^n)$ has handle type $[n,n]$. 
\end{proof}

\bibliographystyle{plain}
\bibliography{gromovlawson}
\end{document}